\newtheorem{corollary}{Corollary}[section]
\newtheorem{lemma}[corollary]{Lemma}
\newtheorem{theorem}[corollary]{Theorem}
\theoremstyle{definition}
\newtheorem{definition}[corollary]{Definition}
\newtheorem{remark}[corollary]{Remark}
\newtheorem*{acknowledgements}{\sc Acknowledgements}
\numberwithin{equation}{section}
\newcommand{\abs}[1]{\lvert #1 \rvert}
\newcommand{\norm}[1]{\lVert #1 \rVert}
\newcommand{\spr}[2]{\langle #1, #2\rangle}
\def\XXint#1#2#3{{\setbox0=\hbox{$#1{#2#3}{\int}$ }
\vcenter{\hbox{$#2#3$ }}\kern-.6\wd0}}
\def\mint{\Xint{\rotatebox[origin][30]{$-$}}}
\def \argmin {\mathop {\rm argmin}\nolimits}
\def \div {\mathop {\rm div}\nolimits}
\def \dist {\mathop {\rm dist}\nolimits}
\def \de {\mathrm{d}}
\def \e {\varepsilon}
\def \R {\mathbb R}
\begin{document}

\title[Existence of solutions to a phase--field model of dynamic fracture]{Existence of solutions to a phase--field model of dynamic fracture with a crack--dependent dissipation}
\author{Maicol Caponi}
\address{SISSA, Via Bonomea 265, 34136 Trieste, Italy}
\email{mcaponi@sissa.it, maicol.caponi@tu-dresden.de}

\thanks{Preprint SISSA 06/2018/MATE}

\begin{abstract}
We propose a phase--field model of dynamic fracture based on the Ambrosio--Tortorelli's approximation, which takes into account dissipative effects due to the speed of the crack tips. By adapting the time discretization scheme contained in~\cite{LOS}, we show the existence of a dynamic crack evolution satisfying an energy--dissipation balance, according to Griffith's criterion. Finally, we analyze the dynamic phase--field model of~\cite{BLR,Lar} with no dissipative terms.
\end{abstract}

\maketitle

{\bf Keywords}: dynamic fracture mechanics, phase--field approximation, elastodynamics, Griffith's criterion, energy balance, crack path.

{\bf MSC 2010}: 35L53, 35Q74, 49J40, 74R10.


\section{Introduction}

In this paper, we present a phase--field model of dynamic brittle fracture based on a suitable adaptation of Griffith's dynamic criterion~\cite{Mott}, and different from the one proposed in~\cite{BLR,Lar,LOS}. Following these papers, we rely on the Ambrosio--Tortorelli's functional~\cite{AT}, which provides a good approximation of the corresponding stationary problem. 

In the quasi--static setting, namely when the external data vary slowly compared to the elastic wave speed of the material, Griffith's criterion~\cite{Grif} states that during the crack growth there is an exact balance between the decrease in stored elastic energy and the energy used to increase the crack. This principle is turned into a precise definition for sharp--interface models by Francfort and Marigo in~\cite{FM}, where, in the antiplane case, the following energy functional is considered:
\begin{equation}\label{eq:ph_Grif}
\frac{1}{2}\int_{\Omega\setminus \Gamma}\abs{\nabla u}^2\de x+\mathcal H^{d-1}(\Gamma).
\end{equation} 
Here, $\Omega\subset\R^d$ is an open bounded set, which represents the reference configuration of the elastic material, the closed set $\Gamma\subset\overline\Omega$ describes the crack, and $u\in H^1(\Omega\setminus \Gamma)$ is the antiplane displacement. The first term in~\eqref{eq:ph_Grif} is the stored elastic energy, while the second one, called surface energy, models the energy used to produce a crack. In this setting, for a given time--dependent Dirichlet datum $t\mapsto w(t)$, a quasi--static evolution is a time--dependent pair $t\mapsto(u(t),\Gamma_t)$ which satisfies the minimality condition
\begin{equation}\label{eq:ph_min}
\frac{1}{2}\int_{\Omega\setminus \Gamma_t}\abs{\nabla u(t)}^2\de x+\mathcal H^{d-1}(\Gamma_t)\le \frac{1}{2}\int_{\Omega\setminus \Gamma^*}\abs{\nabla u^*}^2\de x+\mathcal H^{d-1}(\Gamma^*)
\end{equation}
among all pairs $(u^*,\Gamma^*)$, where $\Gamma^*$ is a closed set with $\Gamma^*\supseteq \Gamma_t$, and $u^*\in H^1(\Omega\setminus \Gamma^*)$ with $u^*=w(t)$ on $\partial\Omega\setminus \Gamma^*$.  The minimum problem~\eqref{eq:ph_min} is coupled with the irreversibility condition $\Gamma_s\subseteq \Gamma_t$ for every $s\le t$ (meaning the crack can only increase in time), and with the Griffith's energy balance for every $t$
\begin{equation*}
\frac{1}{2}\int_{\Omega\setminus \Gamma_t}\abs{\nabla u(t)}^2\de x+\mathcal H^{d-1}(\Gamma_t)= \frac{1}{2}\int_{\Omega\setminus \Gamma_0}\abs{\nabla u(0)}^2\de x+\mathcal H^{d-1}(\Gamma_0)+\text{work of external data}.
\end{equation*}

The study of~\eqref{eq:ph_Grif} dates back to Mumford and Shah in~\cite{Mum-Shah} initially in the context of image segmentation; we refer to~\cite{BFM} and the reference therein for applications to fracture. The challenges in the analysis of this functional lead Ambrosio and Tortorelli to introduce in~\cite{AT} a regularized version of~\eqref{eq:ph_Grif}: the set $\Gamma$ is replaced by a function $v\in[0,1]$ which takes a value close to~0 in a small neighborhood of $\Gamma$, and a value close to~1 far from it. More precisely, for every $\e>0$ they consider
\begin{equation*}
\mathcal E_\e(u,v):=\frac{1}{2}\int_\Omega (v^2+\eta_\e)\abs{\nabla u}^2\de x,\quad\mathcal H_\e(v):=\frac{1}{4\e}\int_\Omega\abs{1-v}^2\de x+\e\int_\Omega\abs{\nabla v}^2\de x,
\end{equation*}
for $u,v\in H^1(\Omega)$, with $0<\eta_\e\ll\e$. A minimum point $(u_\e,v_\e)$ of $\mathcal E_\e+\mathcal H_\e$ provides a good approximation of a minimizer $(u,\Gamma)$ of~\eqref{eq:ph_Grif} as $\e\to 0^+$, in the sense that $u_\e$ is close to $u$, $v_\e$ is close to 0 near $\Gamma$, and $\mathcal E_\e(u_\e,v_\e)+\mathcal H_\e(v_\e)$ approximates the energy~\eqref{eq:ph_Grif}. The minimality condition~\eqref{eq:ph_min} is replaced by
\begin{equation}\label{eq:ph_min2}
\mathcal E_\e(u_\e(t),v_\e(t))+\mathcal H_\e(v_\e(t))\le \mathcal E_\e(u^*,v^*)+\mathcal H_\e(v^*)
\end{equation}
among all pairs $(u^*,v^*)$ with $v^*\le v_\e(t)$ and $u^*=w(t)$ on $\partial\Omega$. Notice that the inequality $v^*\le v_\e(t)$ reflects the inclusion $\Gamma^*\supseteq \Gamma_t$. As before, the minimum problem~\eqref{eq:ph_min2} is complemented by the irreversibility condition $0\le v_\e(t)\le v_\e(s)\le 1$ for every $s\le t$, and by the Griffith's energy balance for every time; we refer to~\cite{Gi} for the convergence of this evolution, as $\e\to 0^+$, toward a sharp--interface one. 

In particular, a quasi--static phase--field evolution $t\mapsto(u_\e(t),v_\e(t))$ satisfies:
\begin{itemize}
\item[$(Q_1)$] for every $t\in[0,T]$ the function $u_\e(t)$ solves $\div([(v_\e(t))^2+\eta_\e]\nabla u_\e(t))=0$ in $\Omega$ with suitable boundary conditions;
\item[$(Q_2)$] the map $t\mapsto v_\e(t)$ is non increasing ($v_\e(t)\le v_\e(s)$ for $0\le s\le t\le T$) and for every $t\in[0,T]$ the function $0\le v_\e(t)\le 1$ solves 
$$\mathcal E_\e(u_\e(t),v_\e(t))+\mathcal H_\e(v_\e(t))\le \mathcal E_\e(u_\e(t),v^*)+\mathcal H_\e(v^*)\quad\text{for every $v^*\le v_\e(t)$};$$
\item[$(Q_3)$] for every $t\in[0,T]$ the pair $(u_\e(t), v_\e(t))$ satisfies the Griffith's energy balance
\begin{equation*}
\mathcal E_\e(u_\e(t),v_\e(t))+\mathcal H_\e(v_\e(t))=\mathcal E_\e(u_\e(0),v_\e(0))+\mathcal H_\e(v_\e(0))+\text{work of external data}.
\end{equation*}
\end{itemize}

In the dynamic case, the first condition is replaced by the wave equation, while in the energy balance we need to take into account the kinetic energy term. Developing these principles, in~\cite{BLR,Lar} the authors propose the following phase--field model of dynamic crack propagation:
\begin{itemize}
\item[$(D_1)$] $u_\e$ solves $\ddot u_\e-\div([v_\e^2+\eta_\e]\nabla u_\e)=0$ in $(0,T)\times\Omega$ with suitable boundary and initial conditions;
\item[$(D_2)$] the map $t\mapsto v_\e(t)$ is non increasing and for every $t\in[0,T]$ the function $0\le v_\e(t)\le 1$ solves
$$\mathcal E_\e(u_\e(t),v_\e(t))+\mathcal H_\e(v_\e(t))\le \mathcal E_\e(u_\e(t),v^*)+\mathcal H_\e(v^*)\quad\text{for every $v^*\le v_\e(t)$};$$
\item[$(D_3)$] for every $t\in[0,T]$ the pair $(u_\e(t), v_\e(t))$ satisfies the Griffith's dynamic energy balance
\begin{align*}
&\frac{1}{2}\int_\Omega|\dot u_\e(t)|^2\de x+\mathcal E_\e(u_\e(t),v_\e(t))+\mathcal H_\e(v_\e(t))\\
&=\frac{1}{2}\int_\Omega|\dot u_\e(0)|^2\de x+\mathcal E_\e(u_\e(0),v_\e(0))+\mathcal H_\e(v_\e(0))+\text{work of external data}.
\end{align*}
\end{itemize}
A solution to this model can be approximated by mean of a time discretization with an alternate scheme: to pass from the previous time to the next one, one first solves the wave equation for $u$ keeping $v$ fixed, and then the minimum problem for $v$ keeping $u$ fixed. This method is used in~\cite{LOS} to prove the existence of a pair $(u,v)$ satisfying $(D_1)$--$(D_3)$ in the more general linear elastic case, that is when the displacement $u$ is vector--valued and $\abs{\nabla u}^2$ is replaced by $\mathbb C Eu\cdot Eu$, where $\mathbb C$ is the elastic tensor and $Eu:=\frac{1}{2}(\nabla u+\nabla u^T)$ is the symmetrized gradient. For technical reasons, a viscoelastic dissipative term is added to $(D_1)$, which means they consider
\begin{equation}\label{eq:ph_diss}
\ddot u_\e-\div([v_\e^2+\eta_\e]\mathbb C(Eu_\e+E\dot u_\e))=0\quad\text{in $(0,T)\times\Omega$}.
\end{equation}

So far there are no existence results for $(D_1)$--$(D_3)$ unless we consider the formulation~\eqref{eq:ph_diss} in $(D_1)$. The disadvantage of the viscoelastic dissipative term appears when we consider the behavior of the solution as $\e\to 0^+$, a problem that is out of the scope of this paper. If we were able to prove the convergence of the solution toward a dynamic sharp--interface evolution, then the energy--dissipation balance for the damped wave equation in cracked domains~\cite{DM-Lar,T1} would imply that the limit crack does not depend on time. 

To avert this problem, we propose here a different model which avoids viscoelastic terms on the displacement and consider dissipative effects due to the speed of the crack tips. More precisely, given a natural number $k\in\mathbb N\cup\{0\}$, we consider a dynamic phase--field evolution $t\mapsto(u_\e(t),v_\e(t))$ satisfying:
\begin{itemize}
\item[$(\tilde D_1)$] $u_\e$ solves $\ddot u_\e-\div([(v_\e^+)^2+\eta_\e]\mathbb C Eu_\e)=0$ in $(0,T)\times\Omega$ with suitable boundary and initial conditions;
\item[$(\tilde D_2)$] the map $t\mapsto v_\e(t)$ is non increasing and for a.e. $t\in(0,T)$ the function $v_\e(t)\le 1$ solves the variational inequality
$$\mathcal E_\e(u_\e(t),v^*)-\mathcal E_\e(u_\e(t),v_\e(t))+\mathcal H_\e(v^*)-\mathcal H_\e(v_\e(t))+(\dot v_\e(t),v^*-v_\e(t))_{H^k(\Omega)}\ge 0\quad\text{for every }v^*\le v_\e(t);$$
\item[$(\tilde D_3)$] for every $t\in[0,T]$ the pair $(u_\e(t), v_\e(t))$ satisfies the Griffith's dynamic energy--dissipation balance
\begin{equation}\label{eq:ph_e1}
\begin{aligned}
&\frac{1}{2}\int_\Omega|\dot u_\e(t)|^2\de x+\mathcal E_\e(u_\e(t),v_\e(t))+\mathcal H_\e(v_\e(t))+\int_0^t\norm{\dot v_\e(s)}_{H^k(\Omega)}^2\de s\\
&=\frac{1}{2}\int_\Omega|\dot u_\e(0)|^2\de x+\mathcal E_\e(u_\e(0),v_\e(0))+\mathcal H_\e(v_\e(0))+\text{work of external data},
\end{aligned}
\end{equation}
\end{itemize}
where in this case $\mathcal E_\e(u,v)\coloneqq\frac{1}{2}\int_\Omega [(v^+)^2+\eta_\e]\mathbb CEu\cdot Eu\de x$ for $u\in H^1(\Omega;\R^d)$ and $v\in H^1(\Omega)$, with the convention $H^0(\Omega):=L^2(\Omega)$ for $k=0$. Notice that, in order to obtain the Griffith's energy balance, we need to consider the dissipative term $\int_0^t\norm{\dot v_\e}_{H^k(\Omega)}^2\de s$. This one guarantees more regularity in time for the phase--field function, more precisely that $v_\e\in H^1(0,T;H^k(\Omega))$, and, as explained in Remark~\ref{rem:ph_diss}, is related to a dissipation depending on the crack tips velocities. 

In the quasi--static setting, a condition similar to $(\tilde D_2)$ can be found in~\cite{Negri,ABN}, where it defines a unilateral gradient flow evolution for the phase--field function $v_\e$. In sharp--interface models, this crack--dependent term arises in the study of the so--called vanishing viscosity evolutions, which are linked to the analysis of local minimizers of Griffith's functional~\eqref{eq:ph_Grif}, see for example~\cite{R,Laz-Toa2}. We point out that a similar dissipation also appears in~\cite{LN} for a one--dimensional dynamic debonding model.

By adapting the time discretization scheme of~\cite{LOS}, we show the existence of a dynamic phase--field evolution $(u_\e,v_\e)$ which satisfies $(\tilde D_1)$--$(\tilde D_3)$, provided that $k>d/2$, where $d$ is the dimension of the ambient space. This condition is crucial to obtain the validity of the Griffith's dynamic energy--dissipation balance~\eqref{eq:ph_e1}, since in our case the viscoelastic dissipation used in~\cite{LOS} is not present.

We conclude this paper by analyzing the dynamic phase--field model with no viscous terms $(D_1)$--$(D_3)$ in the linear elastic case. We show the existence of an evolution $t\mapsto (u_\e(t),v_\e(t))$ which satisfies $(D_1)$ and $(D_2)$, but only an energy inequality (see~\eqref{eq:ph_enin2}), instead of $(D_3)$.

The paper is organized as follows: in Section~\ref{sec:ph2} we describe our model and in Theorem~\ref{thm:ph:main_res} we state our main existence result. Section~\ref{sec:ph3} is devoted to the study of the time discretization scheme. We construct an approximation of our evolution by solving, with an alternate minimization procedure, the problems $(\tilde D_1)$ and $(\tilde D_2)$. Next, we show that this discrete evolution satisfies the estimate~\eqref{eq:ph_est}, which allows us to pass to the limit as the time step tends to zero. For every $k\in\mathbb N\cup\{0\}$ we obtain the existence of a dynamic evolution $t\mapsto (u_\e(t),v_\e(t))$ which satisfies $(\tilde D_1)$ and $(\tilde D_2)$, and the energy--dissipation inequality~\eqref{eq:ph_enin}. We complete the proof of Theorem~\ref{thm:ph:main_res} in Section~\ref{sec:ph4}, where we prove that for $k>d/2$ our evolution is more regular in time, and it satisfies the Griffith's dynamic energy--dissipation balance~\eqref{eq:ph_e1}. Finally, in Section~\ref{sec:ph5} we study the dynamic phase--field model without dissipative terms $(D_1)$--$(D_3)$.


\section{Notation and formulation of the model}\label{sec:ph2}

The space of $m\times d$ matrices with real entries is denoted by $\R^{m\times d}$; in case $m=d$, the subspace of symmetric matrices is denoted by $\R^{d\times d}_{sym}$. We denote by $A^T$ the transpose of $A\in\R^{d\times d}$, and by $A^{sym}$ its symmetric part, namely $A^{sym}:=\frac{1}{2}(A+A^T)$. Given two vectors $a_1,a_2\in \R^d$, their scalar product is denoted by $a_1\cdot a_2$; the same notation is also used to denote the scalar product between two matrices in $\mathbb R^{m\times d}$. 

The partial derivatives with respect to the variable $x_i$ are denoted by $\partial_i$. Given a function $f\colon\R^d\to\R^m$, we denote its Jacobian matrix by $\nabla f$, whose components are $(\nabla f)_{ij}:=\partial_j f_i$ for $i=1,\dots,m$ and $j=1,\dots,d$. When $f\colon\R^d\to \R$, we use $\Delta f$ to denote its the Laplacian, which is defined as $\Delta f:=\sum_{i=1}^d\partial^2_{ii}f$. We set $\nabla^2 f:=\nabla(\nabla f)$ and $\Delta^2 f:=\Delta(\Delta f)$, and inductively we define $\nabla^kf$ and $\Delta^k f$ for every $k\in\mathbb N\cup\{0\}$, with the convention $\nabla^0f=\Delta^0f:=f$. For a tensor field $F\colon \R^d\to\R^{m\times d}$, by $\div F$ we mean its divergence with respect to lines, namely $(\div F)_i:=\sum_{j=1}^d\partial_jF_{ij}$ for $i=1,\dots,m$. 

We adopt standard notation for Lebesgue and Sobolev spaces on open subsets $\Omega$ of $\mathbb R^d$. According to the context, for every $m\in\mathbb N$ we use $(\cdot,\cdot)_{L^2(\Omega)}$ to denote the scalar product in $L^2(\Omega;\R^m)$, and $\norm{\cdot}_{L^p(\Omega)}$ to denote the norm in $L^p(\Omega;\R^m)$ for $1\le p\le\infty$. A similar convention is also used to denote the scalar products and the norms in Sobolev spaces. The boundary values of a Sobolev function are always intended in the sense of traces; the $(d-1)$--dimensional Hausdorff measure is denoted by $\mathcal H^{d-1}$. Given a bounded open set~$\Omega$ with Lipschitz boundary, we denote by $\nu$ the outer unit normal vector to $\partial\Omega$, which is defined $\mathcal H^{d-1}$--a.e. on the boundary.

The norm of a generic Banach space $X$ is denoted by $\Vert\cdot\Vert_X$; when $X$ is an Hilbert space, we use $(\cdot,\cdot)_X$ to denote its scalar product. We denote by $X'$ the dual of $X$, and by $\spr{\cdot}{\cdot}_{X'}$ the duality product between $X'$ and $X$. Given two Banach spaces $X_1$ and $X_2$, the space of linear and continuous maps from $X_1$ to $X_2$ is denoted by $\mathscr L(X_1;X_2)$; given $\mathbb A\in\mathscr L(X_1;X_2)$ and $u\in X_1$, we write $\mathbb A u\in X_2$ to denote the image of $u$ under $\mathbb A$. 

Given an open interval $(a,b)\subseteq\R$, $L^p(a,b;X)$ is the space of $L^p$--functions from $(a,b)$ to $X$; we use $W^{k,p}(a,b;X)$ and $H^k(a,b;X)$ (for $p=2$) to denote the Sobolev space of functions from $(a,b)$ to $X$ with $k$ derivatives. Given $u\in W^{1,p}(a,b;X)$, we denote by $\dot u\in L^p(a,b;X)$ its derivative in the sense distributions. The set of continuous functions from $[a,b]$ to $X$ is denoted by $C^0([a,b];X)$; we also use $C_w^0([a,b];X)$ to denote the set of weakly continuous functions from $[a,b]$ to $X$, namely the collection of maps $u\colon [a,b]\to X$ such that $t\mapsto \spr{x'}{u(t)}_{X'}$ is continuous from $[a,b]$ to $\R$ for every $x'\in X'$. When dealing with an element $u\in H^1(a,b;X)$ we always assume $u$ to be the {\it continuous} representative of its class. In particular, it makes sense to consider the pointwise value $u(t)$ for every $t\in[a,b]$. 

Let $T$ be a positive number and let $\Omega\subset\mathbb R^d$ be a bounded open set with Lipschitz boundary. We fix two (possibly empty) Borel subsets $\partial_{D_1}\Omega$, $\partial_{D_2}\Omega$  of $\partial\Omega$, and we denote by $\partial_{N_1}\Omega$, $\partial_{N_2}\Omega$ their complements. We introduce the spaces
\begin{equation*}
H^1_{D_1}(\Omega;\mathbb R^d):=\{u\in H^1(\Omega;\mathbb R^d): u=0\text{ on }\partial_{D_1}\Omega\},\quad H^1_{D_2}(\Omega):=\{v\in H^1(\Omega): v=0\text{ on }\partial_{D_2}\Omega\},
\end{equation*}
and we denote by $H^{-1}_{D_1}(\Omega;\mathbb R^d)$ the dual space of $H^1_{D_1}(\Omega;\mathbb R^d)$. The transpose of the natural embedding $H^1_{D_1}(\Omega;\mathbb R^d)\hookrightarrow L^2(\Omega;\mathbb R^d)$ induces the embedding of $L^2(\Omega;\mathbb R^d)$ into $H^{-1}_{D_1}(\Omega;\mathbb R^d)$, which is defined by $$\spr{g}{\phi}_{H^{-1}_{D_1}(\Omega)}:=(g,\phi)_{L^2(\Omega)}\quad\text{for $g\in L^2(\Omega;\mathbb R^d)$ and $\phi\in H^1_{D_1}(\Omega;\mathbb R^d)$}.$$ 

Let $\mathbb C\colon \Omega\to\mathscr L(\mathbb R^{d\times d}_{sym};\mathbb R^{d\times d}_{sym})$ be a fourth--order tensor field satisfying the following natural assumptions in linear elasticity:
\begin{align}
&\mathbb C\in L^\infty(\Omega;\mathscr L(\mathbb R^{d\times d}_{sym};\mathbb R^{d\times d}_{sym})),\label{eq:ph_A1}\\
&(\mathbb C(x)\xi_1)\cdot \xi_2= \xi_1\cdot(\mathbb C(x) \xi_2)\quad\text{for a.e. }x\in\Omega\text{ and for every } \xi_1, \xi_2\in\mathbb R^{d\times d}_{sym},\\
&\mathbb C(x) \xi\cdot \xi\ge \lambda_0|\xi|^2\quad\text{for a.e. }x\in\Omega\text{ and for every } \xi\in\mathbb R^{d\times d}_{sym}\label{eq:ph_A3},
\end{align}
for a constant $\lambda_0>0$. Thanks to second Korn's inequality (see, e.g.,~\cite{OSY}) there exists a constant $C_K>0$, depending on $\Omega$, such that
\begin{equation*}
\norm{u}_{H^1(\Omega)}\le C_K(\norm{u}_{L^2(\Omega)}+\norm{Eu}_{L^2(\Omega)})\quad\text{for every }u\in H^1(\Omega;\mathbb R^d),
\end{equation*}
where $Eu$ is the symmetrized gradient of $u$, namely $Eu:=\frac{1}{2}(\nabla u+\nabla u^T)$. By combining  Korn's inequality with~\eqref{eq:ph_A3}, we obtain that $\mathbb C$ satisfies the following ellipticity condition of integral type: 
\begin{equation}\label{eq:ph_ell}
(\mathbb CEu,Eu)_{L^2(\Omega)}\ge c_0\norm{ u}^2_{H^1(\Omega)}-c_1\norm{u}_{L^2(\Omega)}^2\quad\text{for every }u\in H^1(\Omega;\mathbb R^d),
\end{equation}
for two positive constants $c_0$ and $c_1$.

We fix $\e>0$ and we consider a map $b\colon \R\to[0,+\infty)$ satisfying
\begin{align}
&\text{$b\in C^1(\R)$ is convex and non decreasing},\label{eq:ph_breg1} \\
&\text{$b(s)\ge\eta$ for every $s\in\R$ and some $\eta>0$}\label{eq:ph_breg2}.
\end{align}
We define the elastic energy $\mathcal E\colon H^1(\Omega;\mathbb R^d)\times H^1(\Omega)\to[0,\infty]$, the surface energy $\mathcal H\colon H^1(\Omega)\to [0,\infty)$, and the kinetic energy $\mathcal K\colon L^2(\Omega;\R^d)\to [0,\infty)$ in the following way:
\begin{align*}
&\mathcal E(u,v):=\frac{1}{2}\int_\Omega b(v(x))\mathbb C(x) Eu(x)\cdot Eu(x)\de x,\\
&H(v):=\frac{1}{4\e}\int_\Omega\abs{1-v(x)}^2\de x+\e\int_\Omega\abs{\nabla v(x)}^2\de x,\\
&\mathcal K(w):=\frac{1}{2}\int_\Omega\abs{w(x)}^2\de x
\end{align*}
for $u\in H^1(\Omega;\R^d)$, $v\in H^1(\Omega)$, and $w\in L^2(\Omega;\R^d)$. For every $k\in\mathbb N\cup\{0\}$ we also define the dissipative energy $\mathcal G_k\colon H^k(\Omega)\to [0,\infty)$ as
\begin{equation*}
\mathcal G_k(\sigma):=\sum_{i=0}^k\alpha_i\int_\Omega |\nabla^i\sigma(x)|^2\de x
\end{equation*}
for $\sigma\in H^k(\Omega)$, where $\alpha_i$, $i=0,\dots,k$, are non negative numbers with $\alpha_0,\alpha_k>0$ (recall that $H^0(\Omega):=L^2(\Omega)$ for $k=0$). By~\cite[Corollary 4.16]{A}, the functional $\mathcal G_k$ induces a norm on $H^k(\Omega)$ which is equivalent to the standard one. In particular, there exist two constants $\beta_0,\beta_1>0$ such that
\begin{equation*}
    \beta_0\norm{\sigma}_{H^k(\Omega)}^2\le \mathcal G_k(\sigma)\le \beta_1\norm{\sigma}_{H^k(\Omega)}^2\quad\text{for every }\sigma\in H^k(\Omega).
\end{equation*}
Finally, we define the total energy $\mathcal F\colon H^1(\Omega;\mathbb R^d)\times L^2(\Omega;\mathbb R^d)\times H^1(\Omega)\to[0,\infty]$ as
\begin{equation*}
\mathcal F(u,w,v):=\mathcal K(w)+\mathcal E(u,v)+\mathcal H(v)\quad \text{for $u\in H^1(\Omega;\R^d)$, $w\in L^2(\Omega;\R^d)$, and $v\in H^1(\Omega)$}.
\end{equation*}

Throughout the paper we always assume that $\mathbb C$ satisfies~\eqref{eq:ph_A1}--\eqref{eq:ph_A3}, $b$ satisfies~\eqref{eq:ph_breg1} and~\eqref{eq:ph_breg2}, and that $\e$ is a fixed positive number. Given
\begin{align}
&w_1\in H^2(0,T;L^2(\Omega;\mathbb R^d))\cap H^1(0,T;H^1(\Omega;\mathbb R^d)),\quad w_2\in H^1(\Omega)\cap H^k(\Omega)\text{ with $w_2\le 1$ on $\partial_{D_2}\Omega$},\label{eq:ph_bd1}\\
&f\in L^2(0,T;L^2(\Omega;\mathbb R^d)),\quad g\in H^1(0,T;H^{-1}_{D_1}(\Omega;\mathbb R^d)),\label{eq:ph_bd2}\\
&u^0-w_1(0)\in H^1_{D_1}(\Omega;\mathbb R^d),\quad u^1\in L^2(\Omega;\mathbb R^d),\quad v^0-w_2\in H^1_{D_2}(\Omega)\cap H^k(\Omega)\text{ with }v^0\le 1\text{ in }\Omega,\label{eq:ph_id}
\end{align}
we search a pair $(u,v)$ which solves the {\it elastodynamics system}
\begin{equation}
\ddot u(t)-\div[b(v(t))\mathbb C Eu(t)]=f(t)+g(t)\quad\text{in } \Omega,\quad t\in[0,T],\label{eq:ph_elsys}\\
\end{equation}
with boundary conditions formally written as
\begin{align}
&u(t)=w_1(t)\quad\text{on }\partial_{D_1}\Omega,\quad t\in[0,T],\label{eq:ph_bc1}\\
&v(t)=w_2\quad\text{on }\partial_{D_2}\Omega,\quad t\in[0,T],\label{eq:ph_bc2}\\
&(b(v(t))\mathbb C Eu(t))\nu= 0\quad\text{on }\partial_{N_1}\Omega,\quad t\in[0,T],\label{eq:ph_bc3}
\end{align}
and initial conditions 
\begin{equation}\label{eq:ph_ic}
u(0)=u^0,\quad \dot u(0)=u^1,\quad v(0)=v^0\quad\text{in $\Omega$}.
\end{equation} 
In addition, we require the {\it irreversibility condition}:
\begin{equation}\label{eq:ph_irr}
v(t)\le v(s)\quad\text{in $\Omega$}\quad\text{for $0\le s\le t\le T$},
\end{equation}
and for a.e. $t\in(0,T)$ the following {\it crack stability condition}:
\begin{equation}\label{eq:ph_mincon}
\mathcal E(u(t),v^*)-\mathcal E(u(t),v(t))+\mathcal H(v^*)-\mathcal H(v(t))+\sum_{i=0}^k\alpha_i(\nabla^i\dot v(t),\nabla^i v^*-\nabla^i v(t))_{L^2(\Omega)}\ge 0
\end{equation}
among all $v^*-w_2\in H^1_{D_2}(\Omega)\cap H^k(\Omega)$ with $v^*\le v(t)$. Notice that the space $H^1(\Omega)\cap H^k(\Omega)$ coincides with either $H^1(\Omega)$ (when $k=0$) or $H^k(\Omega)$ (for $k\ge 1$). Finally, for every $t\in[0,T]$ we ask the Griffith's dynamic energy--dissipation balance:
\begin{equation}\label{eq:ph_Genb}
    \mathcal F(u(t),\dot u(t), v(t))+\int_0^t\mathcal G_k(\dot v(s))\de s=\mathcal F(u^0,u^1,v^0)+\mathcal W_{tot}(u,v;0,t),
\end{equation}
where $\mathcal W_{tot}(u,v;t_1,t_2)$ is the {\it total work} on $(u,v)$ over the time interval $[t_1,t_2]\subseteq[0,T]$, defined as
\begin{align*}
\mathcal W_{tot}(u,v;t_1,t_2)&:=\int_{t_1}^{t_2}\left[(f(s),\dot u(s)-\dot w_1(s))_{L^2(\Omega)}+(b(v(s))\mathbb CEu(s),E\dot w_1(s))_{L^2(\Omega)}\right]\de s\\
&\quad-\int_{t_1}^{t_2}\left[(\dot u(s),\ddot w_1(s))_{L^2(\Omega)}+\spr{\dot g(s)}{u(s)-w_1(s)}_{H^{-1}_{D_1}(\Omega)}\right]\de s+(\dot u(t_2),\dot w_1(t_2))_{L^2(\Omega)}\\
&\quad+\spr{g(t_2)}{u(t_2)-w_1(t_2)}_{H^{-1}_{D_1}(\Omega)}-(\dot u(t_1),\dot w_1(t_1))_{L^2(\Omega)}-\spr{g(t_1)}{u(t_1)-w_1(t_1)}_{H^{-1}_{D_1}(\Omega)}.
\end{align*}

\begin{remark}
A simple prototype for the function $b$ is given by
\begin{equation*}
b(s)\coloneqq (\max\{s,0\})^2+\eta\quad\text{for $s\in\R$}.
\end{equation*}
In this case, the elastic energy becomes
\begin{equation}\label{eq:ph_elastic}
\mathcal E(u,v)= \frac{1}{2}\int_\Omega [(\max\{v(x),0\})^2+\eta]\mathbb C(x) Eu(x)\cdot Eu(x)\de x
\end{equation}
for $u\in H^1(\Omega;\R^d)$ and $v\in H^1(\Omega)$, which corresponds to the dynamic phase--field model $(\tilde D_1)$--$(\tilde D_3)$ considered in the introduction. Usually, in the phase--field setting, the elastic energy is defined as
\begin{equation*}
\frac{1}{2}\int_\Omega [(v(x))^2+\eta]\mathbb C(x) Eu(x)\cdot Eu(x)\de x
\end{equation*}
for $u\in H^1(\Omega;\R^d)$ and $v\in H^1(\Omega)$, with $v$ satisfying $0\le v\le 1$. In our case, due to the presence of the dissipative term introduced in $(\tilde D_2)$ and $(\tilde D_3)$, we need to consider phase--field functions $v$ which may assume negative values. Therefore, we have to slightly modify the elastic energy functional by considering~\eqref{eq:ph_elastic}.
\end{remark}

\begin{remark}\label{rem:ph_diss}
We give an idea of the meaning of the term $\mathcal G_k(\dot v)$ in the phase--field setting, by comparing it with a dissipation, in the sharp--interface case, which depends on the velocity of the crack tips. We consider just an example in the particular case $d=2$ and $k=0$ of a rectilinear crack $\Gamma_t:=\{(\sigma,0):\sigma\le s(t)\}$ moving along the $x_1$--axis, with $s\in C^1([0,T])$, $s(0)=0$, and $\dot s(t)\ge 0$ for every $t\in[0,T]$. In view of the analysis done in~\cite{AT}, the sequence $v_\e(t)$ which best approximate $\Gamma_t$ takes the following form:
\begin{equation*}
v_\e(t,x):=\Psi\left(\frac{\dist(x,\Gamma_t)}{\e}\right)\quad \text{for }(t,x)\in[0,T]\times\mathbb R^2.
\end{equation*}
Here, $\Psi\colon\mathbb R\to[0,1]$ is a $C^1$ function satisfying $\Psi(s)=0$ for $\abs{s}\le \delta$, with $0<\delta<1$, and $\Psi(s)=1$ for $\abs{s}\ge 1$. The function $v_\e\in C^1([0,T]\times\mathbb R^2)$ is constantly 0 in a $\e\delta$--neighborhood of $\Gamma_t$, and takes the value $1$ outside a $\e$--neighborhood of $\Gamma_t$. Moreover, its time derivative satisfies
\begin{equation*}
\dot v_\e(t,x)=-\frac{\dot s(t)}{\e}\partial_1\Phi\left(\frac{x-(s(t),0)}{\e}\right)\quad\text{for }(t,x)\in[0,T]\times\mathbb R^2,
\end{equation*}
where $\Phi(y):=\Psi(\dist(y,\Gamma_0))$ for $y\in\R^2$. In particular for every $t\in[0,T]$ we deduce
\begin{align*}
\norm{\dot v_\e(t)}_{L^2(\Omega)}^2=\frac{\dot s(t)^2}{\e^2}\int_{\mathbb R^2}\left|\partial_1\Phi\left(\frac{x-(s(t),0)}{\e}\right)\right|^2\de x=\dot s(t)^2\int_{\mathbb R^2}\abs{\partial_1\Phi(y)}^2\de y=C_\Phi\dot s(t)^2.
\end{align*}
Therefore, this term can be used to detect the dissipative effects due to the velocity of the moving crack. With similar computations, if there are $m$ crack-tips, with different velocities $\dot s_i(t)$, $i=1,\dots,m$, then the term $\norm{\dot v_\e(t)}_{L^2(\Omega)}^2$ corresponds to a dissipation of the form $\sum_{i=1}^m C_i\dot s^2_i(t)$, with $C_i$ positive constants.
\end{remark}

To precise the notion of solution to the problem~\eqref{eq:ph_elsys}--\eqref{eq:ph_Genb}, we consider a pair of functions $(u,v)$ satisfying the following regularity assumptions: 
\begin{align}
&u\in C^0([0,T];H^1(\Omega;\mathbb R^d))\cap C^1([0,T];L^2(\Omega;\mathbb R^d))\cap H^2(0,T;H^{-1}_{D_1}(\Omega;\mathbb R^d)),\label{eq:ph_reg1}\\
&u(t)-w_1(t)\in H^1_{D_1}(\Omega;\mathbb R^d)\text{ for every }t\in[0,T],\\
&v\in C^0([0,T];H^1(\Omega))\cap H^1(0,T;H^k(\Omega)),\label{eq:ph_reg3}\\
&v(t)-w_2\in H^1_{D_2}(\Omega)\text{ and }v(t)\le 1\text{ in }\Omega\text{ for every }t\in[0,T].\label{eq:ph_reg4}
\end{align} 

\begin{definition}
Let $w_1$, $w_2$, $f$, and $g$ be as in~\eqref{eq:ph_bd1} and~\eqref{eq:ph_bd2}. We say that $(u,v)$ is a {\it weak solution} to the elastodynamics system~\eqref{eq:ph_elsys} with boundary conditions~\eqref{eq:ph_bc1}--\eqref{eq:ph_bc3}, if $(u,v)$ satisfies~\eqref{eq:ph_reg1}--\eqref{eq:ph_reg4}, and for a.e. $t\in (0,T)$ we have
\begin{equation}\label{eq:ph_weak_form}
\spr{\ddot u(t)}{\psi}_{H^{-1}_{D_1}(\Omega)}+(b(v(t))\mathbb C Eu(t),E\psi)_{L^2(\Omega)}=(f(t),\psi)_{L^2(\Omega)}+\spr{g(t)}{\psi}_{H^{-1}_{D_1}(\Omega)}
\end{equation}
for every $\psi\in H^1_{D_1}(\Omega;\mathbb R^d)$.
\end{definition}

\begin{remark}
Since $v(t)\le 1$ for every $t\in[0,T]$ and $b$ satisfies~\eqref{eq:ph_breg1} and~\eqref{eq:ph_breg2}, the function $b(v(t))$ belongs to $L^\infty(\Omega)$ for every $t\in[0,T]$. Hence, the equation~\eqref{eq:ph_weak_form} makes sense for every $\psi\in H^1_{D_1}(\Omega;\mathbb R^d)$. Moreover, if $(u,v)$ satisfies~\eqref{eq:ph_reg1}--\eqref{eq:ph_reg4}, then the function $(t_1,t_2)\mapsto\mathcal W_{tot}(u,v;t_1,t_2)$ is well defined and continuous, thanks to the previous assumptions on $\mathbb C$, $b$, $w_1$, $f$, and $g$.
\end{remark}

We state now our main result, whose proof will be given at the end of Section~\ref{sec:ph4}.

\begin{theorem}\label{thm:ph:main_res}
Let $k>d/2$ and let $w_1$, $w_2$, $f$, $g$, $u^0$, $u^1$, and $v^0$ be as in~\eqref{eq:ph_bd1}--\eqref{eq:ph_id}. Then there exists a weak solution $(u,v)$ to the problem~\eqref{eq:ph_elsys}--\eqref{eq:ph_bc3} with initial conditions~\eqref{eq:ph_ic} satisfying the irreversibility condition~\eqref{eq:ph_irr}, the crack stability condition~\eqref{eq:ph_mincon}, and the Griffith's dynamic energy--dissipation balance~\eqref{eq:ph_Genb}.
\end{theorem}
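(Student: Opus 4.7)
The proof adapts the time--discretization scheme of~\cite{LOS}, the main novelty being the replacement of the viscoelastic dissipation with the crack--dependent term $\mathcal G_k(\dot v)$. The plan is: (i) build discrete--time solutions via an alternate minimization scheme, (ii) derive uniform energy estimates and pass to a limit to obtain a candidate $(u,v)$ satisfying everything except possibly the Griffith balance, (iii) use the Sobolev embedding $H^k(\Omega)\hookrightarrow L^\infty(\Omega)$, valid for $k>d/2$, to upgrade the resulting energy inequality to the equality~\eqref{eq:ph_Genb}.

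\textbf{Discrete scheme.} Fix $n\in\mathbb N$, set $\tau_n:=T/n$, $t_n^i:=i\tau_n$, and define $(u_n^i,v_n^i)_{i=0,\dots,n}$ recursively from the initial values $u_n^0:=u^0$, $u_n^{-1}:=u^0-\tau_n u^1$, $v_n^0:=v^0$. At each step, with $v_n^{i-1}$ frozen, solve
\[
\spr{\tfrac{u_n^i-2u_n^{i-1}+u_n^{i-2}}{\tau_n^2}}{\psi}_{H^{-1}_{D_1}(\Omega)}+(b(v_n^{i-1})\mathbb CEu_n^i,E\psi)_{L^2(\Omega)}=(f(t_n^i),\psi)_{L^2(\Omega)}+\spr{g(t_n^i)}{\psi}_{H^{-1}_{D_1}(\Omega)}
\]
for all $\psi\in H^1_{D_1}(\Omega;\R^d)$, with $u_n^i-w_1(t_n^i)\in H^1_{D_1}(\Omega;\R^d)$ (Lax--Milgram via~\eqref{eq:ph_ell}); then, with $u_n^i$ frozen, define $v_n^i$ as the minimizer of
\[
v\mapsto\mathcal E(u_n^i,v)+\mathcal H(v)+\tfrac{1}{2\tau_n}\mathcal G_k(v-v_n^{i-1})
\]
over $\{v-w_2\in H^1_{D_2}(\Omega)\cap H^k(\Omega):v\le v_n^{i-1}\}$ (direct method). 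Testing the discrete equation with $u_n^i-u_n^{i-1}$ and exploiting the minimality of $v_n^i$ (which in particular implies a discrete crack stability condition), then summing over $i=1,\dots,j$ and applying discrete Gronwall, yields a uniform-in-$n$ discrete version of~\eqref{eq:ph_Genb} in the form of an inequality.

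\textbf{Passage to the limit.} The estimate bounds the piecewise affine/constant interpolants $u_n$ in $W^{1,\infty}(0,T;L^2(\Omega;\R^d))\cap L^\infty(0,T;H^1(\Omega;\R^d))$ and $v_n$ in $L^\infty(0,T;H^1(\Omega))\cap H^1(0,T;H^k(\Omega))$, with $v_n$ non--increasing in time. Aubin--Lions compactness and standard weak--$*$ compactness arguments produce, along a subsequence, limits $u$ and $v$ satisfying~\eqref{eq:ph_reg1}--\eqref{eq:ph_reg4}. Passage to the limit in the discrete equation yields the weak form~\eqref{eq:ph_weak_form}; passage to the limit in the Euler--Lagrange inequality of the minimization step yields the crack stability condition~\eqref{eq:ph_mincon} for a.e.\ $t\in(0,T)$; the irreversibility condition~\eqref{eq:ph_irr} and the initial conditions~\eqref{eq:ph_ic} are inherited from the construction. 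Weak lower semicontinuity of $\mathcal F$ and of $\sigma\mapsto\int_0^t\mathcal G_k(\sigma)\de s$ gives the inequality $\le$ in~\eqref{eq:ph_Genb}.

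\textbf{Upgrading to the energy balance.} This is the main obstacle and the only step that makes essential use of $k>d/2$. The strategy is to justify the chain rule
\[
\tfrac{d}{ds}\mathcal E(u(s),v(s))=(b(v(s))\mathbb CEu(s),E\dot u(s))_{L^2(\Omega)}+\tfrac{1}{2}(b'(v(s))\dot v(s)\mathbb CEu(s),Eu(s))_{L^2(\Omega)}
\]
and combine it with the pointwise inequality obtained by taking $v^*=v(s+h)$ in~\eqref{eq:ph_mincon}, dividing by $h$, and letting $h\to 0^+$. In the combination, the indefinite term $\tfrac{1}{2}(b'(v)\dot v\mathbb CEu,Eu)_{L^2(\Omega)}$ cancels against the $v$--derivative of the stability condition, producing the reverse energy inequality and hence~\eqref{eq:ph_Genb}. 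Rigor of the chain rule requires $b'(v(\cdot))\dot v(\cdot)$ to act as an $L^\infty(\Omega)$--valued multiplier on $|Eu(\cdot)|^2\in L^\infty(0,T;L^1(\Omega))$: this is exactly where $k>d/2$ enters, since the Sobolev embedding $H^k(\Omega)\hookrightarrow L^\infty(\Omega)$ gives $\dot v\in L^2(0,T;L^\infty(\Omega))$, hence $b'(v)\dot v\in L^2(0,T;L^\infty(\Omega))$ and every term is integrable. The technicality of using $\dot u-\dot w_1$ as test function in~\eqref{eq:ph_weak_form}, although the time regularity of $u$ is only $C^1([0,T];L^2(\Omega;\R^d))\cap H^2(0,T;H^{-1}_{D_1}(\Omega;\R^d))$, is handled by an approximation/duality argument which produces exactly the work functional $\mathcal W_{tot}$ on the right-hand side of~\eqref{eq:ph_Genb}.
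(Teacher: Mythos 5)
Your steps (i) and (ii) match the paper's Section~\ref{sec:ph3} closely, and your identification of $H^k(\Omega)\hookrightarrow L^\infty(\Omega)$ as the mechanism making $\dot b(v)\dot v$ an $L^\infty$--multiplier is correct. The gap is in step (iii). First, the chain rule you propose cannot be justified as written: $E\dot u(s)$ is not defined, because after passing to the discrete limit $\dot u(s)$ lies only in $L^2(\Omega;\R^d)$, not $H^1$. Second, you assume $u\in C^1([0,T];L^2(\Omega;\R^d))$ from the outset, but the discrete limit a priori only produces a generalized solution with $u\in W^{1,\infty}(0,T;L^2(\Omega;\R^d))$ and $u,\dot u$ merely weakly continuous in time; the definition of weak solution requires strong continuity, so this must be proved, not assumed. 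The ``approximation/duality argument'' you gesture at (presumably difference quotients as in the paper's Lemma~\ref{lem:ph_equiv}) only yields the energy identity for a.e.\ $t$, since the boundary terms involve pointwise values of $\dot u$ and $Eu$ that are only weakly continuous. Combined with your $\le$ inequality for every $t$, this gives equality a.e., not everywhere, and does not yield the regularity required by~\eqref{eq:ph_reg1}.

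What closes the gap in the paper are Lemmas~\ref{lem:ph_exis} and~\ref{lem:ph_seq}: freezing the limit phase--field $\sigma=v\in H^1(0,T;C^0(\overline\Omega))$, one proves \emph{uniqueness} of the solution $z$ to the wave equation by a Ladyzenskaya--Gronwall argument --- a second, independent use of $k>d/2$, since that argument needs $\|\dot\sigma(t)\|_{L^\infty(\Omega)}$. Uniqueness, combined with a restarting-at-$t_0$ argument, upgrades the a.e.\ energy balance to an everywhere balance; and the everywhere balance forces continuity of $t\mapsto\mathcal K(\dot z(t))+\mathcal E(z(t),\sigma(t))$, which together with weak continuity yields $z\in C^0([0,T];H^1(\Omega;\R^d))\cap C^1([0,T];L^2(\Omega;\R^d))$. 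Only after this bootstrap is $(u,v)$ a weak solution in the required sense, and only then does combining the crack stability condition with the everywhere balance~\eqref{eq:ph_j1} produce the reverse inequality needed to obtain~\eqref{eq:ph_Genb}. This bootstrap is the technical heart of the proof and is absent from your outline.
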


\begin{remark}
According to Griffith's dynamic criterion (see~\cite{Mott}), we expect the sum of kinetic and elastic energy to be dissipated during the evolution, while it is balanced when we take into account the surface energy associated to the phase--field function $v$. This happens in our case if we also consider $\int_0^t \mathcal G_k(\dot v)\de s$. The presence of this term takes into account the rate at which the function $v$ is decreasing and it is a consequence of the crack stability condition~\eqref{eq:ph_mincon}. 

We need $k>d/2$ in order to obtain the energy equality~\eqref{eq:ph_Genb}. Indeed, in this case the embedding $H^k(\Omega)\hookrightarrow C^0(\overline\Omega)$ is continuous and compact (see, e.g.,~\cite[Theorem 6.2]{A}), which implies that $\dot v(t)\in C^0(\overline\Omega)$ for a.e. $t\in(0,T)$. This regularity is crucial, since we obtain~\eqref{eq:ph_Genb} throughout another energy balance (see~\eqref{eq:ph_j1}), which is well defined only when $\dot v(t)\in L^\infty(\Omega)$. 
\end{remark}

\begin{remark}
In Theorem~\ref{thm:ph:main_res} we consider only the case of zero Neumann boundary data. Anyway, the previous result can be easily adapted to Neumann boundary conditions of the form
\begin{equation}\label{eq:ph_bc4}
(b(v(t))\mathbb C Eu(t))\nu=F(t)\quad\text{on }\partial_{N_1}\Omega,\quad t\in[0,T],
\end{equation}
provided that $F\in H^1(0,T;L^2(\partial_{N_1}\Omega;\mathbb R^d))$. In this case a {\it weak solution} to the problem~\eqref{eq:ph_elsys} with Dirichlet boundary conditions~\eqref{eq:ph_bc1} and~\eqref{eq:ph_bc2}, and Neumann boundary condition~\eqref{eq:ph_bc4} is a pair $(u,v)$ satisfying~\eqref{eq:ph_reg1}--\eqref{eq:ph_reg4} and for a.e. $t\in(0,T)$ the equation
\begin{equation*}
\spr{\ddot u(t)}{\psi}_{H^{-1}_{D_1}(\Omega)}+(b(v(t))\mathbb C Eu(t),E\psi)_{L^2(\Omega)}=(f(t),\psi)_{L^2(\Omega)}+\spr{\tilde g(t)}{\psi}_{H^{-1}_{D_1}(\Omega)}
\end{equation*} 
for every $\psi\in H^1_{D_1}(\Omega;\mathbb R^d)$, where the term $\tilde g(t)\in H^{-1}_{D_1}(\Omega;\mathbb R^d)$ is defined for $t\in[0,T]$ as
\begin{equation*}
\spr{\tilde g(t)}{\psi}_{H^{-1}_{D_1}(\Omega)}:=\spr{g(t)}{\psi}_{H^{-1}_{D_1}(\Omega)}+\int_{\partial_{N_1}\Omega}F(t,x)\cdot\psi(x)\de\mathcal H^{d-1}(x)\quad\text{for }\psi\in H^1_{D_1}(\Omega;\mathbb R^d).
\end{equation*}
Since $\tilde g\in H^1(0,T;H^{-1}_{D_1}(\Omega;\mathbb R^d))$, we can apply Theorem~\ref{thm:ph:main_res} with $\tilde g$ instead of $g$, and we derive the existence of a weak solution $(u,v)$ to~\eqref{eq:ph_elsys}--\eqref{eq:ph_bc2} with Neumann boundary condition~\eqref{eq:ph_bc4}.
\end{remark}

In the next lemma we show that for $k>d/2$ the Griffith's dynamic energy--dissipation balance can be rephrased in the following identity:
\begin{equation}\label{eq:ph_stcon}
\partial_v\mathcal E(u(t),v(t))[\dot v(t)]+\partial\mathcal H(v(t))[\dot v(t)]+\mathcal G_k(\dot v(t))= 0\quad\text{for a.e. }t\in(0,T),
\end{equation}
where the derivatives $\partial_v\mathcal E$ and $\partial\mathcal H$ take the form
\begin{align*}
&\partial_v\mathcal E(u,v)[\chi]=\frac{1}{2}\int_\Omega \dot b(v(x))\chi(x)\mathbb C(x) Eu(x)\cdot Eu(x)\de x\quad\text{for }u\in H^1(\Omega;\mathbb R^d)\text{ and } v,\chi\in H^1(\Omega)\cap L^\infty(\Omega),\\
&\partial\mathcal H(v)[\chi]=\frac{1}{2\e}\int_\Omega (v(x)-1)\chi(x)\de x+2\e\int_\Omega \nabla v(x)\cdot\nabla\chi(x)\de x\quad\text{for }v,\chi\in H^1(\Omega).
\end{align*}

\begin{lemma}\label{lem:ph_equiv}
Let $k>d/2$ and let $w_1$, $w_2$, $f$, $g$, $u^0$, $u^1$, and $v^0$ be as in~\eqref{eq:ph_bd1}--\eqref{eq:ph_id}. Assume that $(u,v)$ is a weak solution to the problem~\eqref{eq:ph_elsys}--\eqref{eq:ph_bc3} with initial conditions~\eqref{eq:ph_ic}. Then the Griffith's dynamic energy--dissipation balance~\eqref{eq:ph_Genb} is equivalent to the identity~\eqref{eq:ph_stcon}.
\end{lemma}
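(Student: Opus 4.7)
The plan is to use the weak equation \eqref{eq:ph_weak_form} to isolate a kinetic-plus-work identity for $u$, subtract it from the Griffith balance \eqref{eq:ph_Genb} to get an integral identity purely in $v$, and then use the crack stability condition \eqref{eq:ph_mincon} to upgrade this to the pointwise statement \eqref{eq:ph_stcon}. The same chain of equivalences, read in reverse, gives the converse implication.

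Concretely, I test \eqref{eq:ph_weak_form} with $\psi=\dot u(s)-\dot w_1(s)\in H^1_{D_1}(\Omega;\R^d)$ and integrate in $s\in(0,t)$; a distributional integration by parts in time (rearranging $\ddot u$ against $\dot u$ and against $\dot w_1$, using $u-w_1\in H^1_{D_1}$ and the regularity \eqref{eq:ph_reg1}) gives the wave energy identity
\begin{equation*}
\mathcal K(\dot u(t))+\int_0^t (b(v(s))\mathbb C Eu(s),E\dot u(s))_{L^2(\Omega)}\,\de s = \mathcal K(u^1)+\mathcal W_{tot}(u,v;0,t),
\end{equation*}
which depends only on \eqref{eq:ph_weak_form}. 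Separately, $k>d/2$ gives the embedding $H^k(\Omega)\hookrightarrow L^\infty(\Omega)$, so $\dot v(t)\in L^\infty(\Omega)$ for a.e. $t$ and $\partial_v\mathcal E(u(t),v(t))[\dot v(t)]$ is well defined. Together with \eqref{eq:ph_reg1}--\eqref{eq:ph_reg4} this yields the chain rules
\begin{equation*}
\mathcal E(u(t),v(t))-\mathcal E(u^0,v^0)=\int_0^t (b(v)\mathbb CEu,E\dot u)_{L^2(\Omega)}\,\de s+\int_0^t \partial_v\mathcal E(u,v)[\dot v]\,\de s,
\end{equation*}
and $\mathcal H(v(t))-\mathcal H(v^0)=\int_0^t \partial\mathcal H(v)[\dot v]\,\de s$.

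Substituting these into \eqref{eq:ph_Genb} and cancelling the kinetic term together with the cross term $(b\mathbb C Eu,E\dot u)$ via the wave energy identity shows that \eqref{eq:ph_Genb} is equivalent to
\begin{equation*}
\int_0^t\bigl(\partial_v\mathcal E(u(s),v(s))[\dot v(s)]+\partial\mathcal H(v(s))[\dot v(s)]+\mathcal G_k(\dot v(s))\bigr)\,\de s=0\qquad\text{for every }t\in[0,T].
\end{equation*}
Now in \eqref{eq:ph_mincon} I insert the admissible test $v^*=v(t)+\lambda\dot v(t)$ with $\lambda>0$ small (admissible since $\dot v(t)\le 0$ a.e. by \eqref{eq:ph_irr} and $\dot v(t)\in H^1_{D_2}(\Omega)\cap H^k(\Omega)$ for a.e. $t$), divide by $\lambda$ and let $\lambda\to 0^+$ to obtain the pointwise inequality
\begin{equation*}
\partial_v\mathcal E(u(t),v(t))[\dot v(t)]+\partial\mathcal H(v(t))[\dot v(t)]+\mathcal G_k(\dot v(t))\ge 0 \qquad\text{for a.e. }t\in(0,T).
\end{equation*}
A nonnegative function whose primitive vanishes on $[0,t]$ for every $t$ must vanish a.e., giving \eqref{eq:ph_stcon} in the forward direction; the reverse implication follows by integrating \eqref{eq:ph_stcon} in $s\in(0,t)$ and running the same chain rules and wave energy identity backward.

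The main obstacle is the rigorous justification of the chain rule for $s\mapsto\mathcal E(u(s),v(s))$: the pairing $(b(v)\mathbb CEu,E\dot u)_{L^2(\Omega)}$ is only formal, because $\dot u$ lies in $C^0([0,T];L^2)$ rather than in $L^2(0,T;H^1)$. One circumvents this through a Steklov-type mollification of \eqref{eq:ph_weak_form} in time, which lets $\ddot u$ act on $\dot u-\dot w_1$ in the $H^{-1}_{D_1}$--$H^1_{D_1}$ duality before one passes to the limit in the mollifier; the hypothesis $k>d/2$ is exactly what is needed to identify the mollified $v$-derivative with $\partial_v\mathcal E(u,v)[\dot v]$ in the limit via dominated convergence.
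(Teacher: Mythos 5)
Your overall route is the same as the paper's: derive the ``wave energy balance'' from the weak equation by a Steklov/difference‐quotient argument in time, combine it with the chain rules for $\mathcal E$ and $\mathcal H$ to reduce~\eqref{eq:ph_Genb} to an integral identity in the $v$--terms alone, and then localize in time. The paper implements the first two steps simultaneously: it tests~\eqref{eq:ph_weak_form} against the difference quotient $\psi_h(t)=\frac{u(t+h)-u(t)}{h}-\frac{w_1(t+h)-w_1(t)}{h}$ at times $t$ and $t+h$, sums, integrates, and passes $h\to 0^+$, which yields directly the combined identity~\eqref{eq:ph_eneq2}. This bypasses the need to make sense of the two intermediate identities in your write‐up separately, each of which involves the ill‐defined pairing $(b(v)\mathbb CEu,E\dot u)_{L^2(\Omega)}$; your acknowledged mollification fix is exactly what the paper's $\psi_h$ does, and the embedding $H^k(\Omega)\hookrightarrow C^0(\overline\Omega)$ for $k>d/2$ is used, as you expect, to pass to the limit in the $\dot b(v)\dot v$ term. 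So the strategy and the role of $k>d/2$ are in agreement with the paper.

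There is, however, one genuine gap in the last step. To go from
$\int_0^t\bigl(\partial_v\mathcal E(u(s),v(s))[\dot v(s)]+\partial\mathcal H(v(s))[\dot v(s)]+\mathcal G_k(\dot v(s))\bigr)\,\de s=0$ for every $t$ to the pointwise identity~\eqref{eq:ph_stcon}, you invoke the crack stability condition~\eqref{eq:ph_mincon} (to get nonnegativity of the integrand) and, implicitly, the irreversibility~\eqref{eq:ph_irr} (to make $v^*=v(t)+\lambda\dot v(t)$ admissible). But neither~\eqref{eq:ph_mincon} nor~\eqref{eq:ph_irr} is among the hypotheses of Lemma~\ref{lem:ph_equiv}: a weak solution, in the sense of~\eqref{eq:ph_reg1}--\eqref{eq:ph_reg4} and~\eqref{eq:ph_weak_form}, need not satisfy them. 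As written, your forward implication therefore uses unavailable information. The fix is immediate and sign‐free: once the primitive $t\mapsto\int_0^t(\,\cdots)\,\de s$ vanishes identically on $[0,T]$, Lebesgue's differentiation theorem gives that the integrand vanishes a.e., no nonnegativity required. This is exactly how the paper closes the reverse direction --- by comparing~\eqref{eq:ph_Genb} with~\eqref{eq:ph_eneq2} to obtain~\eqref{eq:ph:in_stcon} on every subinterval and differentiating --- so your argument is repaired simply by dropping the appeal to~\eqref{eq:ph_mincon}.
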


\begin{proof}
We follow the same techniques of~\cite[Lemma 2.6]{DMS}. Let us fix  $0<h<T$ and let us define the function 
\begin{equation*}
\psi_h(t):=\frac{u(t+h)-u(t)}{h}-\frac{w_1(t+h)-w_1(t)}{h}\quad \text{for }t\in[0,T-h].
\end{equation*}
We use $\psi_h(t)$ as test function in~\eqref{eq:ph_weak_form} first at time $t$, and then at time $t+h$. By summing the two expressions and integrating in a fixed time interval $[t_1,t_2]\subseteq[0,T-h]$, we obtain the identity 
\begin{align}
&\int_{t_1}^{t_2}\spr{\ddot u(t+h)+\ddot u(t)}{\psi_h(t)}_{H^{-1}_{D_1}(\Omega)}\de t+\int_{t_1}^{t_2}(b(v(t+h))\mathbb C Eu(t+h)+b(v(t))\mathbb C Eu(t),E\psi_h(t))_{L^2(\Omega)}\de t\nonumber\\
&=\int_{t_1}^{t_2}(f(t+h)+f(t),\psi_h(t))_{L^2(\Omega)}\de t+\int_{t_1}^{t_2}\spr{g(t+h)+g(t)}{\psi_h(t)}_{H^{-1}_{D_1}(\Omega)}\de t.\label{eq:ph_qqq0}
\end{align}
We study these four terms separately. By performing an integration by parts, the first one becomes 
\begin{align*}
&\int_{t_1}^{t_2}\spr{\ddot u(t+h)+\ddot u(t)}{\psi_h(t)}_{H^{-1}_{D_1}(\Omega)}\de t\\
&=-\int_{t_1}^{t_2}(\dot u(t+h)+\dot u(t),\dot \psi_h(t))_{L^2(\Omega)}\de t+(\dot u(t_2+h)+\dot u(t_2), \psi_h(t_2))_{L^2(\Omega)}-(\dot u(t_1+h)+\dot u(t_1),\psi_h(t_1))_{L^2(\Omega)}\\
&=-\frac{1}{h}\int_{t_2}^{t_2+h}\norm{\dot u(t)}_{L^2(\Omega)}^2\de t+\frac{1}{h}\int_{t_1}^{t_1+h}\norm{\dot u(t)}_{L^2(\Omega)}^2\de t+\frac{1}{h}\int_{t_1}^{t_2}(\dot u(t+h)+\dot u(t),\dot w_1(t+h)-\dot w_1(t))_{L^2(\Omega)}\de t\\
&\quad+(\dot u(t_2+h)+\dot u(t_2), \psi_h(t_2))_{L^2(\Omega)}-(\dot u(t_1+h)+\dot u(t_1),\psi_h(t_1))_{L^2(\Omega)}.
\end{align*}
Since $u,w_1\in C^1([0,T];L^2(\Omega;\mathbb R^d))$, by sending $h\to 0^+$ we deduce
\begin{align}
&\begin{aligned}
\lim_{h\to 0^+}\left[-\frac{1}{h}\int_{t_2}^{t_2+h}\norm{\dot u(t)}_{L^2(\Omega)}^2\de t+\frac{1}{h}\int_{t_1}^{t_1+h}\norm{\dot u(t)}_{L^2(\Omega)}^2\de t\right]=-\norm{\dot u(t_2)}_{L^2(\Omega)}^2+\norm{\dot u(t_1)}_{L^2(\Omega)}^2,
\end{aligned}\\
&\begin{aligned}
&\lim_{h\to 0^+}\left[(\dot u(t_2+h)+\dot u(t_2), \psi_h(t_2))_{L^2(\Omega)}-(\dot u(t_1+h)+\dot u(t_1),\psi_h(t_1))_{L^2(\Omega)}\right]\\
&=2\norm{\dot u(t_2)}_{L^2(\Omega)}^2-2(\dot u(t_2),\dot w_1(t_2))_{L^2(\Omega)}-2\norm{\dot u(t_1)}_{L^2(\Omega)}^2+2(\dot u(t_1),\dot w_1(t_1))_{L^2(\Omega)}.
\end{aligned}
\end{align}
Notice that the sequence $\frac{1}{h}[\dot w_1(\,\cdot\,+h)-\dot w_1]$ converges strongly to $\ddot w_1$ in $L^2(t_1,t_2;L^2(\Omega;\mathbb R^d))$ as $h\to 0^+$, since $\dot w_1$ belongs to $H^1(0,T;L^2(\Omega;\mathbb R^d))$. Therefore, there exist a sequence $h_m\to 0^+$ as $m\to\infty$, and a function $\kappa\in L^2(t_1,t_2)$ such that for a.e. $t\in(t_1,t_2)$
\begin{align*}
&\frac{1}{h_m}(\dot u(t+h_m)+\dot u(t),\dot w_1(t+h_m)-\dot w_1(t))_{L^2(\Omega)}\to 2(\dot u(t),\ddot w_1(t))_{L^2(\Omega)}\quad\text{as $m\to\infty$},\\
&\left|\frac{1}{h_m}(\dot u(t+h_m)+\dot u(t),\dot w_1(t+h_m)-\dot w_1(t))_{L^2(\Omega)}\right|\le 2\norm{\dot u}_{L^\infty(0,T;L^2(\Omega))}\kappa(t)\quad\text{for every $m\in\mathbb N$}.
\end{align*}
By the dominated convergence theorem we derive
\begin{equation}\label{eq:ph_qqq2}
\lim_{h\to 0^+}\frac{1}{h}\int_{t_1}^{t_2}(\dot u(t+h)+\dot u(t),\dot w_1(t+h)-\dot w_1(t))_{L^2(\Omega)}\de t= 2\int_{t_1}^{t_2}(\dot u(t),\ddot w_1(t))_{L^2(\Omega)}\de t,
\end{equation}
since the limit does not depend on the subsequence $\{h_m\}_{m\in\mathbb N}$. For the term involving $f$, we observe that $f(\,\cdot\,+h)\to f$ and $\psi_h\to \dot u-\dot w_1$ in $L^2(t_1,t_2;L^2(\Omega;\mathbb R^d))$ as $h\to 0^+$. Hence, we have
\begin{equation}
\lim_{h\to 0^+}\int_{t_1}^{t_2}(f(t+h)+f(t),\psi_h(t))_{L^2(\Omega)}\de t=2\int_{t_1}^{t_2}(f(t),\dot u(t)-\dot w_1(t))_{L^2(\Omega)}\de t.
\end{equation}
By using the identity
\begin{align*}
&\int_{t_1}^{t_2}\spr{g(t+h)+g(t)}{\psi_h(t)}_{H^{-1}_{D_1}(\Omega)}\de t\\
&=\frac{2}{h}\int_{t_2}^{t_2+h}\spr{g(t)}{u(t)-w_1(t)}_{H^{-1}_{D_1}(\Omega)}\de t-\frac{2}{h}\int_{t_1}^{t_1+h}\spr{g(t)}{u(t)-w_1(t)}_{H^{-1}_{D_1}(\Omega)}\de t\\
&\quad-\frac{1}{h}\int_{t_1}^{t_2}\spr{g(t+h)-g(t)}{u(t+h)+u(t)-w_1(t+h)-w_1(t)}_{H^{-1}_{D_1}(\Omega)}\de t, 
\end{align*}
and proceeding as before, we also deduce
\begin{align}
&\lim_{h\to 0^+}\int_{t_1}^{t_2}\spr{g(t+h)+g(t)}{\psi_h(t)}_{H^{-1}_{D_1}(\Omega)}\de t\\
&=2\spr{g(t_2)}{u(t_2)-w_1(t_2)}_{H^{-1}_{D_1}(\Omega)}-2\spr{g(t_1)}{u(t_1)-w_1(t_1)}_{H^{-1}_{D_1}(\Omega)}-2\int_{t_1}^{t_2}\spr{\dot g(t)}{u(t)-w_1(t)}_{H^{-1}_{D_1}(\Omega)}\de t\nonumber.
\end{align}
It remains to study the last term, that can be rephrased in the following way
\begin{align*}
&\int_{t_1}^{t_2}(b(v(t+h))\mathbb C Eu(t+h)+b(v(t))\mathbb C Eu(t),E\psi_h(t))_{L^2(\Omega)}\de t\\
&=\frac{1}{h}\int_{t_2}^{t_2+h}(b(v(t))\mathbb C Eu(t),Eu(t))_{L^2(\Omega)}\de t-\frac{1}{h}\int_{t_1}^{t_1+h}(b(v(t))\mathbb C Eu(t),Eu(t))_{L^2(\Omega)}\de t\\
&\quad-\frac{1}{h}\int_{t_1}^{t_2}([b(v(t+h))-b(v(t))]\mathbb C Eu(t),Eu(t+h))_{L^2(\Omega)}\de t\\
&\quad-\frac{1}{h}\int_{t_1}^{t_2}(b(v(t+h))\mathbb C Eu(t+h)+b(v(t))\mathbb C Eu(t),Ew_1(t+h)-Ew_1(t))_{L^2(\Omega)}\de t.
\end{align*}
Since $H^k(\Omega)\hookrightarrow C^0(\overline\Omega)$, we deduce that $v$ belongs to the space $C^0([0,T];C^0(\overline\Omega))$. This property, together with $b\in C^1(\R)$ and $u\in C^0([0,T];H^1(\Omega;\R^d))$, implies
\begin{equation}
\begin{aligned}
&\lim_{h\to 0^+}\left[\frac{1}{h}\int_{t_2}^{t_2+h}(b(v(t))\mathbb C Eu(t),Eu(t))_{L^2(\Omega)}\de t-\frac{1}{h}\int_{t_1}^{t_1+h}(b(v(t))\mathbb C Eu(t),Eu(t))_{L^2(\Omega)}\de t\right]\\
&=(b(v(t_2))\mathbb C Eu(t_2),Eu(t_2))_{L^2(\Omega)}-(b(v(t_1))\mathbb C Eu(t_1),Eu(t_1))_{L^2(\Omega)}.
\end{aligned}
\end{equation}
Moreover, the sequence $\frac{1}{h}[v(\,\cdot\,+h)-v]$ converges strongly to $\dot v$ in $L^2(t_1,t_2;C^0(\overline\Omega))$ as $h\to 0^+$. Therefore, there exist a subsequence $h_m\to 0^+$ as $m\to\infty$ and a function $\kappa\in L^2(t_1,t_2)$ such that for a.e. $t\in(t_1,t_2)$
\begin{align*}
&\frac{v(t+h_m)-v(t)}{h_m}\to \dot v(t)\quad\text{in $C^0(\overline\Omega)$}\quad\text{as $m\to\infty$},\\
&\left\Vert \frac{v(t+h_m)-v(t)}{h_m}\right\Vert_{L^\infty(\Omega)}\le \kappa(t)\quad\text{for every $m\in\mathbb N$}.
\end{align*}
Thanks to~\eqref{eq:ph_breg1}, we can apply Lagrange's theorem to derive for a.e. $t\in(t_1,t_2)$
\begin{align*}
&\frac{1}{h_m}(b(v(t+h_m))-b(v(t))\mathbb CEu(t),Eu(t+h_m))_{L^2(\Omega)}\to (\dot b(v(t))\dot v(t)\mathbb CEu(t),Eu(t))_{L^2(\Omega)}\quad\text{as $m\to\infty$},\\
&\left|\frac{1}{h_m}([b(v(t+h_m))-b(v(t))]\mathbb CEu(t), Eu(t+h_m))_{L^2(\Omega)}\right|\le \dot b(1)\norm{\mathbb C}_{L^\infty(\Omega)} \norm{Eu}^2_{L^\infty(0,T;L^2(\Omega))}\kappa(t),
\end{align*}
since $u\in C^0([0,T];H^1(\Omega;\mathbb R^d))$ and $v(t)\le 1$ for every $t\in[0,T]$. The dominated convergence theorem yields
\begin{equation}
\lim_{h\to 0^+}\frac{1}{h}\int_{t_1}^{t_2}([b(v(t+h))-b(v(t))]\mathbb CEu(t),Eu(t+h))_{L^2(\Omega)}\de t=\int_{t_1}^{t_2}(\dot b(v(t))\dot v(t)\mathbb CEu(t),Eu(t))_{L^2(\Omega)}\de t,
\end{equation}
being the limit independent on the sequence $\{h_m\}_{m\in\mathbb N}$. Finally, notice that $\frac{1}{h}[Ew_1(\,\cdot\,+h)-Ew_1]$ converges strongly to $E\dot w_1$ in $L^2(t_1,t_2;L^2(\Omega;\mathbb R^{d\times d}))$ as $h\to 0^+$. By arguing as in~\eqref{eq:ph_qqq2}, this fact gives 
\begin{equation}\label{eq:ph_qqq3}
\begin{aligned}
&\lim_{h\to 0^+}\frac{1}{h}\int_{t_1}^{t_2}(b(v(t+h))\mathbb C Eu(t+h)+b(v(t))\mathbb C Eu(t),Ew_1(t+h)-Ew_1(t))_{L^2(\Omega)}\de t\\
&=2\int_{t_1}^{t_2}(b(v(t))\mathbb CEu(t),E\dot w_1(t))_{L^2(\Omega)}\de t.
\end{aligned}
\end{equation}
We combine together~\eqref{eq:ph_qqq0}--\eqref{eq:ph_qqq3} to derive 
\begin{equation*}
\begin{aligned}
&\mathcal K(\dot u(t_2))+\mathcal E(u(t_2), v(t_2))-\frac{1}{2}\int_{t_1}^{t_2} (\dot b(v(t))\dot v(t)\mathbb CEu(t),Eu(t))_{L^2(\Omega)}\de t\\
&=\mathcal K(\dot u(t_1))+\mathcal E(u(t_1), v(t_1))+\mathcal W_{tot}(u,v;t_1,t_2)
\end{aligned}
\end{equation*}
for every $t_1,t_2\in[0,T)$ with $t_1<t_2$. Since all terms in the previous equality are continuous with respect to $t_2$, we deduce that a weak solution to~\eqref{eq:ph_elsys}--\eqref{eq:ph_bc3} with initial conditions~\eqref{eq:ph_ic} satisfies the energy balance
\begin{equation}\label{eq:ph_eneq2}
\begin{aligned}
&\mathcal K(\dot u(t_2))+\mathcal E(u(t_2), v(t_2))-\frac{1}{2}\int_{t_1}^{t_2} (\dot b(v(t))\dot v(t)\mathbb CEu(t),Eu(t))_{L^2(\Omega)}\de t\\
&=\mathcal K(\dot u(t_1))+\mathcal E(u(t_1),v(t_1))+\mathcal W_{tot}(u,v;t_1,t_2)
\end{aligned}
\end{equation}
for every $t_1,t_2\in[0,T]$ with $t_1<t_2$.

Let us assume now~\eqref{eq:ph_stcon}. Since $v\in H^1(0,T;H^k(\Omega))$, the function $t\mapsto \zeta(t):=\mathcal H(v(t))$ is absolutely continuous on $[0,T]$, with $\dot \zeta(t)=\partial\mathcal H(v(t))[\dot v(t)]$ for a.e. $t\in(0,T)$. By integrating~\eqref{eq:ph_stcon} over $[t_1,t_2]\subseteq[0,T]$, we obtain
\begin{equation}\label{eq:ph:in_stcon}
-\frac{1}{2}\int_{t_1}^{t_2} (\dot b(v(t))\dot v(t)\mathbb CEu(t),Eu(t))_{L^2(\Omega)}\de t=\mathcal H(v(t_2))-\mathcal H(v(t_1))+\int_{t_1}^{t_2}\mathcal G_k(v(t))\de t.
\end{equation}
The above identity, together with~\eqref{eq:ph_eneq2}, implies the Griffith's dynamic energy--dissipation balance~\eqref{eq:ph_Genb}. On the other hand, if~\eqref{eq:ph_Genb} is satisfied, by comparing it with~\eqref{eq:ph_eneq2} we deduce~\eqref{eq:ph:in_stcon} for every interval $[t_1,t_2]\subseteq [0,T]$, from which~\eqref{eq:ph_stcon} follows.
\end{proof}

\begin{remark}\label{rem:ph_varin}
When $k>\frac{d}{2}$, the crack stability condition~\eqref{eq:ph_mincon} is equivalent for a.e. $t\in(0,T)$ to the following variational inequality
\begin{equation}\label{eq:ph_varin}
\partial_v\mathcal E(u(t),v(t))[\chi]+\partial \mathcal H(v(t))[\chi]+\sum_{i=0}^k\alpha_i(\nabla^i\dot v(t),\nabla^i\chi)_{L^2(\Omega)}\ge 0
\end{equation}
among all $\chi\in H^1_{D_2}(\Omega)\cap H^k(\Omega)$ with $\chi\le 0$. Indeed, for every $s\in(0,1]$ we can take $v(t)+s\chi$ as test function in~\eqref{eq:ph_mincon}. After some computations and by dividing by $s$, we deduce
\begin{equation}\label{eq:min_chi}
\begin{aligned}
&\frac{\mathcal E(u(t),v(t)+s\chi)-\mathcal E(u(t),v(t))}{s}+\partial\mathcal H(v(t))[\chi]+\sum_{i=0}^k\alpha_i(\nabla^i\dot v(t),\nabla^i\chi)_{L^2(\Omega)}\\
&\quad+s\left[\frac{1}{4\e}\norm{\chi}_{L^2(\Omega)}^2+\e\norm{\nabla\chi}_{L^2(\Omega)}^2\right] \ge 0. 
\end{aligned}
\end{equation}
Let us fix $x\in\Omega$. By Lagrange's theorem there exists $z_s(t,x)\in [v(t,x)+s\chi(x),v(t,x)]$ such that
\begin{equation*}
\frac{b(v(t,x)+s\chi(x))-b(v(x))}{s}= \dot b(z_s(t,x))\chi(x),
\end{equation*}
since $b\in C^1(\R)$. In particular, we have
\begin{align*}
&\lim_{s\to 0^+}\frac{b(v(t,x)+s\chi(x))-b(v(x))}{s}= \dot b(v(t,x))\chi(x),\\
&\left|\frac{b(v(t,x)+s\chi(x))-b(v(x))}{s}\right|\le \dot b(1)|\chi(x)|,
\end{align*}
because $\dot b\in C^0(\R)$ is non negative, non decreasing, and $z_s(t,x)\le v(t,x)\le 1$. Then, the dominated convergence theorem yields
\begin{equation*}
\lim_{s\to 0^+}\frac{\mathcal E(u(t),v(t)+s\chi)-\mathcal E(u(t),v(t))}{s}=\frac{1}{2}\int_\Omega \dot b(v(t))\chi\mathbb C Eu(t)\cdot Eu(t)\de x=\partial_v \mathcal E(u(t),v(t))[\chi].
\end{equation*}
By sending $s\to 0^+$ in~\eqref{eq:min_chi} we hence deduce~\eqref{eq:ph_varin}. On the other hand, it is easy to check that~\eqref{eq:ph_varin} implies~\eqref{eq:ph_mincon}, by exploiting the convexity of $v^*\to\mathcal E(u(t),v^*)+\mathcal H(v^*)$ and taking $\chi:=v^*-v(t)$ for every $v^*-w_2\in H^1_{D_2}(\Omega)\cap H^k(\Omega)$ with $v^*\le v(t)$.

The inequality~\eqref{eq:ph_varin} for a.e. $t\in(0,T)$ gives that the distribution
\begin{equation*}
-\frac{1}{2}\dot b(v(t))\mathbb CEu(t)\cdot Eu(t)-\frac{1}{2\e}(v(t)-1)+2\e\Delta v(t)-\sum_{i=0}^k\alpha_i(-1)^i\Delta^i \dot v(t)\in\mathcal D'(\Omega)
\end{equation*}
is positive on $\Omega$. Therefore it coincides with a positive Radon measure $\mu(t)$ on $\Omega$, by Riesz's representation theorem. In particular, since $H^k(\Omega)\hookrightarrow C^0(\overline\Omega)$, for a.e. $t\in(0,T)$ we deduce 
\begin{equation*}
\spr{\zeta(t)}{\chi}_{(H^k(\Omega))'}:=\partial_v\mathcal E(u(t),v(t))[\chi]+\partial \mathcal H(v(t))[\chi]+\sum_{i=0}^k\alpha_i(\nabla^i\dot v(t),\nabla^i\chi)_{L^2(\Omega)}=-\int_\Omega\chi\de \mu(t)
\end{equation*}
for every function $\chi\in H^k(\Omega)$ with compact support in $\Omega$. We combine this fact with the identity~\eqref{eq:ph_stcon} to derive for our model an analogous of the classical activation rule in Griffith's criterion: for a.e. $t\in(0,T)$ the positive measure $\mu(t)$ must vanish on the set of points $x\in\Omega$ where $\dot v(t,x)>0$. Indeed, let us consider a sequence $\{\psi_m\}_m\subset C_c^\infty(\Omega)$ such that $0\le \psi_m\le \psi_{m+1}\le 1$ in $\Omega$ for every $m\in\mathbb N$, and $\psi_m(x)\to 1$ for every $x\in\Omega$ as $m\to\infty$. The function $\dot v(t)$ is admissible in~\eqref{eq:ph_varin} for a.e. $t\in(0,T)$, since $\frac{1}{h}[v(t+h)-v(t)]\in H^1_{D_2}(\Omega)$ converges strongly to $\dot v(t)$ in $H^k(\Omega)$ as $h\to 0^+$, and $t\mapsto v(t)$ is non decreasing in $[0,T]$. Therefore, thanks to~\eqref{eq:ph_stcon} and~\eqref{eq:ph_varin}, for a.e. $t\in(0,T)$ we get
\begin{align*}
0=\spr{\zeta(t)}{\dot v(t)}_{(H^k(\Omega))'}&=\spr{\zeta(t)}{\dot v(t)\psi_m}_{(H^k(\Omega))'}+\spr{\zeta(t)}{\dot v(t)(1-\psi_m)}_{(H^k(\Omega))'}\\
&\ge \spr{\zeta(t)}{\dot v(t)\psi_m}_{(H^k(\Omega))'}=-\int_\Omega \dot v(t)\psi_m\de\mu(t)\ge 0,
\end{align*}
because $\dot v(t)\psi_m\in H^k(\Omega)$ has compact support. Hence, for a.e. $t\in(0,T)$ we have
\begin{align*}
0=\lim_{m\to\infty}\int_\Omega \dot v(t)\psi_m\de\mu(t)=\int_\Omega\dot v(t)\de \mu(t),
\end{align*}
by the monotone convergence theorem, which implies our activation condition.
\end{remark}


\section{The time discretization scheme}\label{sec:ph3}

In this section we show some general results that are true for every $k\in\mathbb N\cup\{0\}$. In particular, we prove that the problem~\eqref{eq:ph_elsys}--\eqref{eq:ph_ic} admits always a solution $(u,v)$ less regular in time which satisfies the irreversibility condition~\eqref{eq:ph_irr} and the crack stability condition~\eqref{eq:ph_mincon}. Throughout this section, we always assume that $w_1$, $w_2$, $f$, $g$, $u^0$, $u^1$, and $v^0$ satisfy~\eqref{eq:ph_bd1}--\eqref{eq:ph_id}.

We start by introducing the following notion of solution, which requires less regularity on the time variable. 

\begin{definition}\label{def:ph_gensol}
The pair $(u,v)$ is a {\it generalized solution} to~\eqref{eq:ph_elsys}--\eqref{eq:ph_bc3} if 
\begin{align}
&u\in L^\infty(0,T;H^1(\Omega;\mathbb R^d))\cap W^{1,\infty}(0,T;L^2(\Omega;\mathbb R^d))\cap H^2(0,T;H^{-1}_{D_1}(\Omega;\mathbb R^d)),\label{eq:ph_greg1}\\
&u(t)-w_1(t)\in H^1_{D_1}(\Omega;\mathbb R^d)\text{ for every }t\in[0,T],\label{eq:ph_greg2}\\
&v\in L^\infty(0,T;H^1(\Omega))\cap H^1(0,T;H^k(\Omega)),\label{eq:ph_greg3}\\
&v(t)-w_2\in H^1_{D_2}(\Omega)\text{ and }v(t)\le 1\text{ in }\Omega\text{ for every }t\in[0,T],\label{eq:ph_greg4}
\end{align}
and for a.e. $t\in(0,T)$ the equation~\eqref{eq:ph_weak_form} holds.
\end{definition}

\begin{remark}\label{rem:ph_weak_con}
We recall that, given two reflexive Banach spaces $X$ and $Y$, with continuous embedding $X\hookrightarrow Y$, we have
\begin{equation*}
C_w^0([0,T];Y)\cap L^\infty(0,T;X)= C_w^0([0,T];X),
\end{equation*}
see, for instance~\cite[Chapitre XVIII, \S 5, Lemme 6]{DL}. In particular, if $u\in C_w^0([0,T];X)$, then
\begin{equation*}
    \norm{u(t)}_X\le \norm{u}_{L^\infty(0,T;X)}\quad\text{for every $t\in[0,T]$}.
\end{equation*}
By applying this result to a generalized solution $(u,v)$ to~\eqref{eq:ph_elsys}--\eqref{eq:ph_bc3}, we get that $u\in C_w^0([0,T];H^1(\Omega;\mathbb R^d))$, $\dot u\in C_w^0([0,T];L^2(\Omega;\mathbb R^d))$, and $v\in C_w^0([0,T];H^1(\Omega))$. Therefore, the initial conditions~\eqref{eq:ph_ic} makes sense, since the functions $u(t)$, $\dot u(t)$, and $v(t)$ are uniquely defined for every $t\in[0,T]$ as elements of $H^1(\Omega;\mathbb R^d)$, $L^2(\Omega;\mathbb R^d)$, and $H^1(\Omega)$, respectively. 
\end{remark}

To show the existence of a generalized solution to~\eqref{eq:ph_elsys}--\eqref{eq:ph_bc3}, we approximate our problem by mean of a time discretization with an alternate scheme, as done in~\cite{LOS}. We divide the time interval $[0,T]$ by introducing $n$ equispaced nodes, and in each of them we first solve the elastodynamics system~\eqref{eq:ph_ell} with $v$ fixed, and then the crack stability condition~\eqref{eq:ph_mincon} with $u$ fixed. Finally, we consider some interpolants of the discrete solutions and, thanks to an a priori estimate, we pass to the limit as $n\to\infty$.

We fix $n\in\mathbb N$, and we set 
\begin{align*}
&\tau_n=\frac{T}{n},\quad u^0_n:=u^0,\quad u^{-1}_n:=u^0-\tau_nu^1,\quad v^0_n:=v^0,\\
&g_n^j:=g(j\tau_n),\quad w_n^j:=w_1(j\tau_n)\quad\text{for } j=0,\dots,n,\quad f_n^j:=\frac{1}{\tau_n}\int_{(j-1)\tau_n}^{j\tau_n}f(s)\de s\quad\text{for } j=1,\dots,n.
\end{align*}
For $j=1,\dots,n$ we consider the following two minimum problems: 
\begin{itemize}
\item[$(i)$] $u_n^j-w_n^j\in H^1_{D_1}(\Omega;\R^d)$ is the minimizer of
\begin{align*}
u^*\mapsto\frac{1}{2\tau_n^2}\left\Vert u^*-2u_n^{j-1}-u_n^{j-2}\right\Vert_{L^2(\Omega)}^2+\mathcal E(u^*,v_n^{j-1})-(f_n^j,u^*)_{L^2(\Omega)}-\spr{g_n^j}{u^*-w_n^j}_{H^{-1}_{D_1}(\Omega)}
\end{align*}
among every $u^*-w_n^j\in H^1_{D_1}(\Omega;\R^d)$;
\item[$(ii)$] $v_n^j-w_2\in H^1_{D_2}(\Omega)\cap H^k(\Omega)$ with $v_n^j\le v_n^{j-1}$ is the minimizer of
\begin{align*}
&v^*\mapsto\mathcal E(u_n^j,v^*)+\mathcal H(v^*)+\frac{1}{2\tau_n}\mathcal G_k(v^*-v_n^{j-1})
\end{align*}
among every $v^*-w_2\in H^1_{D_2}(\Omega)\cap H^k(\Omega)$ with $v^*\le v_n^{j-1}$.
\end{itemize}
Since $\mathbb C$ and $b$ satisfy~\eqref{eq:ph_A1}--\eqref{eq:ph_A3}, \eqref{eq:ph_breg1} and~\eqref{eq:ph_breg2} the two discrete problems are well defined. In particular, for every $j=1,\dots,n$ there exists a unique pair $(u_n^j,v_n^j)\in H^1(\Omega;\mathbb R^d)\times (H^1(\Omega)\cap H^k(\Omega))$ solution to~$(i)$ and~$(ii)$. 

Let us define
\begin{align*}
&\delta u_n^j:=\frac{u_n^j-u_n^{j-1}}{\tau_n}\quad\text{for } j=0,\dots,n,\quad\delta^2 u_n^j:=\frac{\delta u_n^j-\delta u_n^{j-1}}{\tau_n},\quad \delta v_n^j:=\frac{v_n^j-v_n^{j-1}}{\tau_n}\quad\text{for } j=1,\dots,n.
\end{align*}
For $j=1,\dots,n$ the minimality of $u_n^j$ implies
\begin{equation}\label{eq:ph_un2}
(\delta^2 u_n^j,\psi)_{L^2(\Omega)}+(b(v_n^{j-1})\mathbb C Eu_n^j,E\psi)_{L^2(\Omega)}=(f_n^j,\psi)_{L^2(\Omega)}+\spr{g_n^j}{\psi}_{H^{-1}_{D_1}(\Omega)}
\end{equation}
for every $\psi\in H^1_{D_1}(\Omega;\mathbb R^d)$, which is the discrete counterpart of~\eqref{eq:ph_weak_form}. Moreover, we can characterize the function $v_n^j$ in the following way.

\begin{lemma}\label{lem:ph_dvarin}
For $j=1,\dots,n$ the function $v_n^j-w_2\in H^1_{D_2}(\Omega)\cap H^k(\Omega)$ with $v_n^j\le v_n^{j-1}$ is the unique solution to the variational inequality
\begin{equation}\label{eq:ph_dvarin}
\mathcal E(u_n^j,v^*)-\mathcal E(u_n^j,v_n^j)+\partial \mathcal H(v_n^j)[v^*-v_n^j]+\sum_{i=0}^k\alpha_i(\nabla^i\delta v_n^j,\nabla^iv^*-\nabla^i v_n^j)_{L^2(\Omega)}\ge 0
\end{equation}
among all $v^*-w_2\in H^1_{D_2}(\Omega)\cap H^k(\Omega)$ with $v^*\le v_n^{j-1}$. In particular, we have $v_n^j\le 1$ in $\Omega$ and
\begin{equation}\label{eq:ph_steq}
\frac{\mathcal E(u_n^j,v_n^j)-\mathcal E(u_n^j,v_n^{j-1})}{\tau_n}+\partial\mathcal H(v_n^j)[\delta v_n^j]+\mathcal G_k(\delta v_n^j)\le 0.
\end{equation}
Finally, if $k=0$, $w_2\ge 0$ on $\partial_{D_2}\Omega$, $v^0\ge 0$ in $\Omega$, and $b(s)=(\max\{s,0\})^2+\eta$ for $s\in\R$, then $v_n^j\ge 0$ in $\Omega$ for every $j=1,\dots,n$.
\end{lemma}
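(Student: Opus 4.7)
The plan is to read \eqref{eq:ph_dvarin} out of the unilateral minimality defining $v_n^j$ in problem~$(ii)$. Set
\[
J(v^*):=\mathcal E(u_n^j,v^*)+\mathcal H(v^*)+\frac{1}{2\tau_n}\mathcal G_k(v^*-v_n^{j-1}),
\]
so that $v_n^j$ minimizes $J$ on the convex admissible set $\mathcal A:=\{v^*-w_2\in H^1_{D_2}(\Omega)\cap H^k(\Omega):v^*\le v_n^{j-1}\}$; the $(1-v^*)^2$ piece of $\mathcal H$ already makes $J$ strictly convex, hence uniqueness. For any $v^*\in\mathcal A$ set $\chi:=v^*-v_n^j$; the perturbation $v_n^j+s\chi$ belongs to $\mathcal A$ for every $s\in[0,1]$ because the constraint $v^*\le v_n^{j-1}$ is linear, so $J(v_n^j+s\chi)\ge J(v_n^j)$.

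I then expand the two quadratic pieces of $J$ exactly:
\[
\mathcal H(v_n^j+s\chi)-\mathcal H(v_n^j)=s\,\partial\mathcal H(v_n^j)[\chi]+s^2\bigl(\tfrac{1}{4\e}\norm{\chi}_{L^2(\Omega)}^2+\e\norm{\nabla\chi}_{L^2(\Omega)}^2\bigr),
\]
\[
\tfrac{1}{2\tau_n}\bigl[\mathcal G_k(v_n^j+s\chi-v_n^{j-1})-\mathcal G_k(v_n^j-v_n^{j-1})\bigr]=s\sum_{i=0}^k\alpha_i(\nabla^i\delta v_n^j,\nabla^i\chi)_{L^2(\Omega)}+\tfrac{s^2}{2\tau_n}\mathcal G_k(\chi).
\]
Dividing $J(v_n^j+s\chi)-J(v_n^j)\ge 0$ by $s>0$ and sending $s\to 0^+$, the $s^2$ remainders vanish and the $\mathcal E$ difference quotient admits a limit, by convexity of $v\mapsto\mathcal E(u_n^j,v)$ (inherited from the convexity of $b$). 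That same convexity yields $\mathcal E(u_n^j,v^*)-\mathcal E(u_n^j,v_n^j)\ge\lim_{s\to 0^+}[\mathcal E(u_n^j,v_n^j+s\chi)-\mathcal E(u_n^j,v_n^j)]/s$, and replacing the limit by this upper bound is precisely \eqref{eq:ph_dvarin}. The converse implication (and hence the fact that $v_n^j$ is the unique solution of~\eqref{eq:ph_dvarin}) falls out of the same expansion: $J(v^*)-J(v_n^j)$ equals the left--hand side of~\eqref{eq:ph_dvarin} plus the non--negative remainder $\tfrac{1}{4\e}\norm{\chi}_{L^2}^2+\e\norm{\nabla\chi}_{L^2}^2+\tfrac{1}{2\tau_n}\mathcal G_k(\chi)$.

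The bound $v_n^j\le 1$ is then an immediate induction on $j$: $v_n^0=v^0\le 1$ by~\eqref{eq:ph_id} and the constraint $v_n^j\le v_n^{j-1}$ propagates it. To obtain~\eqref{eq:ph_steq}, I test \eqref{eq:ph_dvarin} with the admissible competitor $v^*:=v_n^{j-1}$: then $v^*-v_n^j=-\tau_n\delta v_n^j$, the dissipation term becomes $-\tau_n\mathcal G_k(\delta v_n^j)$, and dividing by $\tau_n$ rearranges the inequality into~\eqref{eq:ph_steq}.

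Finally, for the positivity statement I induct on $j$ with base case $v_n^0=v^0\ge 0$. Assuming $v_n^{j-1}\ge 0$, I compare $J(v_n^j)$ with $J(v^*)$ for $v^*:=(v_n^j)^+$. Admissibility of $v^*$ uses $(v_n^j)^+\le(v_n^{j-1})^+=v_n^{j-1}$ together with $w_2\ge 0$ on $\partial_{D_2}\Omega$ and the stability of $H^1$ under positive--part truncation (this is where the hypothesis $k=0$ is essential, since truncation does not preserve $H^k$ for $k\ge 2$). On $A:=\{v_n^j<0\}$ the three pieces of $J$ go the right way: $b(v^*)=((v_n^j)^+)^2+\eta=b(v_n^j)$ leaves $\mathcal E$ unchanged; $(1-v^*)^2=1\le(1-v_n^j)^2$ and $\abs{\nabla v^*}\le\abs{\nabla v_n^j}$ give $\mathcal H(v^*)\le\mathcal H(v_n^j)$; and the expansion $(v_n^j-v_n^{j-1})^2-(v_n^{j-1})^2=(v_n^j)^2-2v_n^j v_n^{j-1}\ge 0$ (using $v_n^j<0\le v_n^{j-1}$) yields $\mathcal G_0(v^*-v_n^{j-1})\le\mathcal G_0(v_n^j-v_n^{j-1})$. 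Hence $J(v^*)\le J(v_n^j)$, and strict convexity of $J$ combined with minimality forces $v^*=v_n^j$, i.e., $v_n^j\ge 0$. The only delicate point is the $\mathcal G_0$ monotonicity, which is where both $k=0$ and the explicit form of $b$ come in.
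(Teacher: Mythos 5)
Your proof is correct and follows essentially the same route as the paper's: perturb $v_n^j$ by $v_n^j + s(v^*-v_n^j)$, expand the quadratic terms, divide by $s$, use the monotonicity of convex difference quotients to replace the small-$s$ behaviour of the $\mathcal E$ piece by $\mathcal E(u_n^j,v^*)-\mathcal E(u_n^j,v_n^j)$, and recover~\eqref{eq:ph_steq} by the choice $v^*=v_n^{j-1}$. The paper avoids the (harmless) intermediate step of actually passing to the limit in the $\mathcal E$ difference quotient: it bounds the quotient at parameter $s$ by its value at $s=1$ directly via~\eqref{eq:ph_ft2}, so the only $s$-dependence is in the trivially vanishing quadratic remainders. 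Your converse (that \eqref{eq:ph_dvarin} implies minimality) is made more explicit via the exact quadratic expansion of $J$, whereas the paper simply invokes convexity of $\mathcal H$ and $\mathcal G_k$; both are the same argument. For the positivity claim your truncation comparison is identical in substance to the paper's (the paper phrases the $\mathcal G_0$ comparison as $|(v_n^j)^+-v_n^{j-1}|\le |v_n^j-v_n^{j-1}|$ rather than expanding the squares). One small attribution slip in your last sentence: the explicit form of $b$ is what yields $\mathcal E(u_n^j,(v_n^j)^+)=\mathcal E(u_n^j,v_n^j)$, not the $\mathcal G_0$ monotonicity; $k=0$ is what makes $(v_n^j)^+$ admissible and gives the $\mathcal G_0$ comparison (for $k\ge 1$ the inequality $|\nabla((v_n^j)^+-v_n^{j-1})|\le |\nabla(v_n^j-v_n^{j-1})|$ would already fail). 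This does not affect the validity of the proof.
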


\begin{proof}
Let $v_n^j$ be the solution to $(ii)$ and let $v^*-w_2\in H^1_{D_2}(\Omega)\cap H^k(\Omega)$ be such that $v^*\le v_n^{j-1}$. For every $s\in(0,1]$ the function $v_n^j+s(v^*-v_n^j)$ is a competitor for $(ii)$. Hence, by exploiting the minimality of $v_n^j$ and dividing by $s$, we deduce the following inequality
\begin{equation}\label{eq:ph_ft}
\begin{aligned}
&\frac{\mathcal E(u_n^j,v_n^j+s(v^*-v_n^j))-\mathcal E(u_n^j,v_n^j)}{s}+\partial\mathcal H(v_n^j)[v^*-v_n^j]+\sum_{i=0}^k\alpha_i(\nabla^i\delta v_n^j,\nabla^iv^*-\nabla^i v_n^j)_{L^2(\Omega)}\\
&\quad+s\left[\frac{1}{4\e}\norm{v^*-v_n^j}_{L^2(\Omega)}^2+\e\norm{\nabla v^*-\nabla v_n^j}_{L^2(\Omega)}^2+\frac{1}{2\tau_n}\mathcal G_k(v^*-v_n^j)\right]\ge 0.
\end{aligned}
\end{equation}
Notice that
\begin{equation}\label{eq:ph_ft2}
\frac{\mathcal E(u_n^j,v_n^j+s(v^*-v_n^j))-\mathcal E(u_n^j,v_n^j)}{s}\le \mathcal E(u_n^j,v^*)-\mathcal E(u_n^j,v_n^j)\quad\text{for every }s\in(0,1],
\end{equation}
since the difference quotients are non decreasing in $s\in(0,1]$, being $b$ is convex. By combining~\eqref{eq:ph_ft} with~\eqref{eq:ph_ft2} and passing to the limit as $s\to 0^+$, we derive~\eqref{eq:ph_dvarin}. On the other hand, it is easy to see that every solution to~\eqref{eq:ph_dvarin} satisfies $(ii)$, thanks to the convexity of $\mathcal H$ and $\mathcal G_k$. Finally, for every $j=1,\dots,n$ we have $v_n^j\le v^0\le 1$ in $\Omega$, and the inequality~\eqref{eq:ph_steq} is obtained by taking $v^*=v_n^{j-1}$ in~\eqref{eq:ph_dvarin} and dividing by $\tau_n$.

Let us assume that $k=0$, $w_2\ge 0$ on $\partial_{D_2}\Omega$, $v^0\ge 0$ in $\Omega$, and $b(s)=(\max\{s,0\})^2+\eta$ for $s\in\R$. The function $(v_n^1)^+:=\max\{v^1_n,0\}\in H^1(\Omega)$ is a competitor for $(ii)$ and satisfies
\begin{equation*}
\mathcal E(u^1_n,(v_n^1)^+)+\mathcal H((v_n^1)^+)+\frac{1}{2\tau_n}\mathcal G_0((v_n^1)^+-v^0)\le \mathcal E(u^1_n,v^1_n)+\mathcal H(v^1_n)+\frac{1}{2\tau_n}\mathcal G_0(v^1_n-v^0),
\end{equation*}
being $\mathcal E(u^1_n,(v_n^1)^+)=\mathcal E(u^1_n,v^1_n)$, $\mathcal H((v_n^1)^+)\le \mathcal H(v_n^1)$, and $|(v_n^1)^+-v^0|\le |v^1_n-v^0|$ in $\Omega$, which is a consequence of $v^0\ge 0$. Hence, the function $(v_n^1)^+$ solves $(ii)$. This implies $v^1_n=(v_n^1)^+\ge 0$ in $\Omega$, since the minimum point is unique (the $L^2$--norm is strictly convex). We now proceed by induction: if $v_n^{j-1}\ge 0$ in $\Omega$, we can argue as before to get
\begin{equation*}
\mathcal E(u_n^j,(v_n^j)^+)+\mathcal H((v_n^j)^+)+\frac{1}{2\tau_n}\mathcal G_0((v_n^j)^+-v_n^{j-1})\le \mathcal E(u_n^j,v_n^j)+\mathcal H(v_n^j)+\frac{1}{2\tau_n}\mathcal G_0(v_n^j-v_n^{j-1}),
\end{equation*}
which gives $v_n^j=(v_n^j)^+:=\max\{v_n^j,0\}\ge0$ in $\Omega$ for every $j=1\dots,n$.
\end{proof}

As done in~\cite{LOS}, we combine the equation~\eqref{eq:ph_un2} with the inequality~\eqref{eq:ph_steq} to derive a discrete energy inequality for the family $\{(u_n^j, v_n^j)\}_{j=1}^n$.

\begin{lemma}\label{lem:ph_denin}
The family $\{(u_n^j,v_n^j)\}_{j=1}^n$, solution to problems $(i)$ and $(ii)$, satisfies for every $j=1,\dots,n$ the discrete energy inequality
\begin{equation}\label{eq:ph_denin}
\begin{aligned}
&\mathcal F(u^j_n, \delta u^j_n, v^j_n)+\sum_{l=1}^j \tau_n\mathcal G_k(\delta v_n^l)+\sum_{l=1}^j \tau_n^2D_n^l\\
&\le \mathcal F(u^0, u^1, v^0)+\sum_{l=1}^j \tau_n\left[(f_n^l,\delta u_n^l-\delta w_n^l)_{L^2(\Omega)}+(b(v_n^{l-1})\mathbb CEu_n^l,E\delta w_n^l)_{L^2(\Omega)}\right]\\
&\quad-\sum_{l=1}^j \tau_n\left[(\delta u_n^{l-1},\delta^2 w_n^l)_{L^2(\Omega)}-\spr{\delta g_n^l}{u_n^{l-1}-w^{i-1}_n}_{H^{-1}_{D_1}(\Omega)}\right]+(\delta u^j_n,\delta w^j_n)_{L^2(\Omega)}\\
&\quad+\spr{g^j_n}{u^j_n-w^j_n}_{H^{-1}_{D_1}(\Omega)}-(u^1,\dot w_1(0))_{L^2(\Omega)}-\spr{g(0)}{u^0-w_1(0)}_{H^{-1}_{D_1}(\Omega)},
\end{aligned}
\end{equation}
where $\delta w^0_n:=\dot w_1(0)$, $\delta w_n^j:=\frac{1}{\tau_n}[w_n^j-w_n^{j-1}]$, $\delta^2 w_n^j:=\frac{1}{\tau_n}[\delta w_n^j-\delta w_n^{j-1}]$, $\delta g_n^j:=\frac{1}{\tau_n}[g_n^j-g^{n-1}_n]$ for $j=1,\dots,n$, and the dissipations $D_n^j$ are defined as
\begin{equation*}
D_n^j:=\frac{1}{2}\norm{\delta^2u_n^j}_{L^2(\Omega)}^2+\frac{1}{2}(b(v_n^{j-1})\mathbb C E\delta u_n^j,E\delta u_n^j)_{L^2(\Omega)}+\frac{1}{4\e}\norm{\delta v_n^j}_{L^2(\Omega)}^2+\e \norm{\nabla\delta v_n^j}_{L^2(\Omega)}^2\quad\text{for }j=1,\dots,n.
\end{equation*} 
\end{lemma}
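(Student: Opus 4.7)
The strategy is standard: test the Euler--Lagrange equation for $u_n^j$ with a discrete analogue of $\dot u - \dot w_1$, use the variational characterization of $v_n^j$, combine the resulting identities, and apply Abel's summation to the load terms. Throughout, the crucial algebraic identity is
\begin{equation*}
(a-b,a)_H = \tfrac12\|a\|_H^2 - \tfrac12\|b\|_H^2 + \tfrac12\|a-b\|_H^2,
\end{equation*}
which produces, at every step, the telescoping energies plus the dissipation contained in $D_n^l$.

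\textbf{Step 1 (the $u$--equation).} For fixed $l\in\{1,\dots,n\}$, take $\psi=\tau_n(\delta u_n^l-\delta w_n^l)=(u_n^l-u_n^{l-1})-(w_n^l-w_n^{l-1})\in H^1_{D_1}(\Omega;\R^d)$ as test in~\eqref{eq:ph_un2}. Applying the identity above to the kinetic pairing and to the elastic pairing yields
\begin{align*}
&\mathcal K(\delta u_n^l)-\mathcal K(\delta u_n^{l-1})+\tfrac{\tau_n^2}{2}\|\delta^2 u_n^l\|_{L^2(\Omega)}^2+\mathcal E(u_n^l,v_n^{l-1})-\mathcal E(u_n^{l-1},v_n^{l-1})\\
&\quad+\tfrac{\tau_n^2}{2}(b(v_n^{l-1})\mathbb C E\delta u_n^l,E\delta u_n^l)_{L^2(\Omega)}=\tau_n(\delta^2 u_n^l,\delta w_n^l)_{L^2(\Omega)}\\
&\quad+\tau_n(b(v_n^{l-1})\mathbb C Eu_n^l,E\delta w_n^l)_{L^2(\Omega)}+\tau_n(f_n^l,\delta u_n^l-\delta w_n^l)_{L^2(\Omega)}+\tau_n\spr{g_n^l}{\delta u_n^l-\delta w_n^l}_{H^{-1}_{D_1}(\Omega)}.
\end{align*}

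\textbf{Step 2 (the $v$--inequality).} Starting from~\eqref{eq:ph_steq}, I compute exactly the pairing $\tau_n\partial\mathcal H(v_n^j)[\delta v_n^j]$: writing $(v_n^l-1)(v_n^l-v_n^{l-1})=\tfrac12[(v_n^l-1)^2-(v_n^{l-1}-1)^2]+\tfrac12(v_n^l-v_n^{l-1})^2$ and analogously for the gradient term, one obtains
\begin{equation*}
\tau_n\partial\mathcal H(v_n^l)[\delta v_n^l]=\mathcal H(v_n^l)-\mathcal H(v_n^{l-1})+\tau_n^2\Big[\tfrac{1}{4\e}\|\delta v_n^l\|_{L^2(\Omega)}^2+\e\|\nabla\delta v_n^l\|_{L^2(\Omega)}^2\Big].
\end{equation*}
Substituting into~\eqref{eq:ph_steq} (multiplied by $\tau_n$) gives
\begin{equation*}
\mathcal E(u_n^l,v_n^l)-\mathcal E(u_n^l,v_n^{l-1})+\mathcal H(v_n^l)-\mathcal H(v_n^{l-1})+\tau_n^2\Big[\tfrac{1}{4\e}\|\delta v_n^l\|_{L^2(\Omega)}^2+\e\|\nabla\delta v_n^l\|_{L^2(\Omega)}^2\Big]+\tau_n\mathcal G_k(\delta v_n^l)\le 0.
\end{equation*}

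\textbf{Step 3 (combine and telescope).} Adding the identity from Step~1 and the inequality from Step~2, the mixed quantity $\mathcal E(u_n^l,v_n^{l-1})$ cancels and the $\tau_n^2$ remainders assemble precisely into $\tau_n^2 D_n^l$. Summing over $l=1,\dots,j$ makes the differences $\mathcal K(\delta u_n^l)-\mathcal K(\delta u_n^{l-1})$, $\mathcal E(u_n^l,v_n^l)-\mathcal E(u_n^{l-1},v_n^{l-1})$, and $\mathcal H(v_n^l)-\mathcal H(v_n^{l-1})$ telescope, producing $\mathcal F(u_n^j,\delta u_n^j,v_n^j)-\mathcal F(u^0,u^1,v^0)$ on the left (using $\delta u_n^0=u^1$ and $v_n^0=v^0$).

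\textbf{Step 4 (Abel summation on the forcing).} It remains to rewrite the two right--hand--side sums involving $\delta^2 u_n^l$ and $g_n^l$ in the form stated. For the inertial work against $\delta w_n^l$, the discrete integration by parts
\begin{equation*}
\sum_{l=1}^j(\delta u_n^l-\delta u_n^{l-1},\delta w_n^l)_{L^2(\Omega)}=(\delta u_n^j,\delta w_n^j)_{L^2(\Omega)}-(u^1,\dot w_1(0))_{L^2(\Omega)}-\sum_{l=1}^j\tau_n(\delta u_n^{l-1},\delta^2 w_n^l)_{L^2(\Omega)}
\end{equation*}
produces the two boundary contributions at the endpoints and the $\delta^2 w_n^l$ term. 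Applying the analogous Abel summation with $y_l:=u_n^l-w_n^l$ to the $g_n^l$ pairing produces the duality terms at $l=0$ and $l=j$ together with $-\sum_{l=1}^j\tau_n\spr{\delta g_n^l}{u_n^{l-1}-w_n^{l-1}}_{H^{-1}_{D_1}(\Omega)}$. Collecting everything yields exactly~\eqref{eq:ph_denin}.

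There is no single hard step: the main thing to get right is the bookkeeping in Steps~2 and~4, namely the exact computation of the two ``remainder'' quadratic terms arising from $\partial\mathcal H$ (which must combine cleanly with the elastic and kinetic remainders to form $\tau_n^2 D_n^l$) and the correct endpoint adjustments in the two discrete integrations by parts.
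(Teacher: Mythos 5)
Your proposal is correct and follows essentially the same route as the paper: test~\eqref{eq:ph_un2} with $\tau_n(\delta u_n^l-\delta w_n^l)$, expand the kinetic and elastic pairings via the identity $a(a-b)=\tfrac12 a^2-\tfrac12 b^2+\tfrac12(a-b)^2$, use~\eqref{eq:ph_steq} together with the exact expansion of $\tau_n\partial\mathcal H(v_n^l)[\delta v_n^l]$, telescope, and apply Abel summation to the $\delta^2 w_n^l$ and $g_n^l$ terms. The only cosmetic difference is that the paper rewrites $\mathcal E(u_n^l,v_n^{l-1})$ as $\mathcal E(u_n^l,v_n^l)+\tfrac12([b(v_n^{l-1})-b(v_n^l)]\mathbb CEu_n^l,Eu_n^l)_{L^2(\Omega)}$ and bounds the second piece from below by~\eqref{eq:ph_steq}, whereas you keep $\mathcal E(u_n^l,v_n^{l-1})$ and add the $v$--inequality directly, so the cancellation happens one line later; the algebra is identical.
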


\begin{proof}
By using $\psi=\tau_n[\delta u_n^j-\delta w_n^j]\in H^1_{D_1}(\Omega;\mathbb R^d)$ as test function in~\eqref{eq:ph_un2}, for every $j=1,\dots,n$ we deduce the following identity
\begin{align}
&\tau_n(\delta^2u_n^j,\delta u_n^j)_{L^2(\Omega)}+\tau_n(b(v_n^{j-1})\mathbb C Eu_n^j,E\delta u_n^j)_{L^2(\Omega)}\label{eq:ph_bb1}\\
&=\tau_n\left[(f_n^j,\delta u_n^j-\delta w_n^j)_{L^2(\Omega)}+\spr{g_n^j}{\delta u_n^j-\delta w_n^j}_{H^{-1}_{D_1}(\Omega)}+(\delta^2 u_n^j,\delta w_n^j)_{L^2(\Omega)}+(b(v_n^{j-1})\mathbb CEu_n^j,E\delta w_n^j)_{L^2(\Omega)}\right].\nonumber
\end{align}
Thanks to the identity $\abs{a}^2-a\cdot b=\frac{1}{2}\abs{a}^2-\frac{1}{2}\abs{b}^2+\frac{1}{2}\abs{a-b}^2$ for $a,b\in\mathbb R^d$, we can write the first term as
\begin{align}
\tau_n(\delta^2u_n^j,\delta u_n^j)_{L^2(\Omega)}=\norm{\delta u_n^j}_{L^2(\Omega)}^2-(\delta u_n^{j-1},\delta u_n^j)_{L^2(\Omega)}=\mathcal K(\delta u_n^j)-\mathcal K(\delta u_n^{j-1})+\frac{\tau_n^2}{2}\norm{\delta^2u_n^j}_{L^2(\Omega)}^2.
\end{align}
Similarly, we have
\begin{equation}
\begin{aligned}
\tau_n(b(v_n^{j-1})\mathbb C Eu_n^j,E\delta u_n^j)_{L^2(\Omega)}&=\mathcal E(u_n^j, v_n^j)-\mathcal E(u_n^{j-1},v_n^{j-1})+\frac{\tau_n^2}{2}(b(v_n^{j-1})\mathbb C E\delta u_n^j,E\delta u_n^j)_{L^2(\Omega)}\\
&\quad+\frac{1}{2}([b(v_n^{j-1})-b(v_n^j)]\mathbb C Eu_n^j,Eu_n^j)_{L^2(\Omega)}.
\end{aligned}
\end{equation}
We use~\eqref{eq:ph_steq} to estimate from below the last term in the previous inequality in the following way
\begin{equation}\label{eq:ph_bb4}
\begin{aligned}
&\frac{1}{2}([b(v_n^{j-1})-b(v_n^j)]\mathbb C Eu_n^j,Eu_n^j)_{L^2(\Omega)}\\
&\ge\frac{\tau_n}{2\e}(v_n^j-1,\delta v_n^j)_{L^2(\Omega)}+2\e\tau_n(\nabla v_n^j,\nabla\delta v_n^j)_{L^2(\Omega)}+\tau_n\mathcal G_k(\delta v_n^j)\\
&=\mathcal H(v_n^j)-\mathcal H(v_n^{j-1})+\tau_n\mathcal G_k(\delta v_n^j)+\frac{\tau_n^2}{4\e}\norm{\delta v_n^j}_{L^2(\Omega)}^2+\e \tau_n^2\norm{\nabla\delta v_n^j}_{L^2(\Omega)}^2.
\end{aligned}
\end{equation}
By combining~\eqref{eq:ph_bb1}--\eqref{eq:ph_bb4}, for every $j=1,\dots,n$ we obtain
\begin{align*}
&\mathcal F(u_n^j,\delta u_n^j,v_n^j)-\mathcal F(u_n^{j-1},\delta u_n^{j-1},v_n^{j-1})+\tau_n\mathcal G_k(\delta v_n^j)+\tau_n^2D_n^j\\
&\le \tau_n\left[(f_n^j,\delta u_n^j-\delta w_n^j)_{L^2(\Omega)}+\spr{g_n^j}{\delta u_n^j-\delta w_n^j}_{H^{-1}_{D_1}(\Omega)}+(\delta^2 u_n^j,\delta w_n^j)_{L^2(\Omega)}+(b(v_n^{j-1})\mathbb CEu_n^j,E\delta w_n^j)_{L^2(\Omega)}\right]\nonumber.
\end{align*}
Finally, we sum over $l=1,\dots,j$ for every $j\in\{1,\dots,n\}$, and we use the identities
\begin{align}
&\begin{aligned}
\sum_{l=1}^j\tau_n\spr{g_n^l}{\delta u_n^l-\delta w_n^l}_{H^{-1}_{D_1}(\Omega)}&=\spr{g^j_n}{u^j_n-w^j_n}_{H^{-1}_{D_1}(\Omega)}-\spr{g(0)}{u^0-w_1(0)}_{H^{-1}_{D_1}(\Omega)}\\
&\quad-\sum_{l=1}^j\tau_n\spr{\delta g_n^l}{u_n^{l-1}-w_n^{l-1}}_{H^{-1}_{D_1}(\Omega)},\label{eq:ph_gn}
\end{aligned}\\
&\sum_{l=1}^j\tau_n(\delta^2 u_n^l,\delta w_n^l)_{L^2(\Omega)}=(\delta u_n^j,\delta w^j_n)_{L^2(\Omega)}-(u^1,w_1(0))_{L^2(\Omega)}-\sum_{l=1}^j\tau_n(\delta u_n^{l-1},\delta^2w_n^l)_{L^2(\Omega)},\label{eq:ph_wn}
\end{align}
to deduce the discrete energy inequality~\eqref{eq:ph_denin}.
\end{proof}

The first consequence of~\eqref{eq:ph_denin} is the following a priori estimate.

\begin{lemma}\label{lem:ph_bound}
There exists a constant $C>0$, independent of $n$, such that
\begin{equation}\label{eq:ph_est}
\max_{j=1,\dots,n}\{\norm{\delta u_n^j}_{L^2(\Omega)}+\norm{u_n^j}_{H^1(\Omega)}+\norm{v_n^j}_{H^1(\Omega)}\}+\sum_{j=1}^n\tau_n\norm{\delta v_n^j}^2_{H^k(\Omega)}+\sum_{j=1}^n\tau_n^2D_n^j\le C.
\end{equation}
\end{lemma}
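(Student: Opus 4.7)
The starting point is the discrete energy inequality~\eqref{eq:ph_denin} of Lemma~\ref{lem:ph_denin}. Setting
\begin{equation*}
E_j:=\mathcal F(u_n^j,\delta u_n^j,v_n^j)+\sum_{l=1}^j\tau_n\mathcal G_k(\delta v_n^l)+\sum_{l=1}^j\tau_n^2D_n^l,
\end{equation*}
my plan is to prove $\max_{1\le j\le n}E_j\le C$ uniformly in $n$ by a discrete Gronwall argument, so that~\eqref{eq:ph_est} follows by extracting the individual coercivity bounds hidden in $E_j$. The main ingredients I plan to use to estimate the right-hand side of~\eqref{eq:ph_denin} are: (a)~the one-sided bound $v_n^{l-1}\le 1$ from Lemma~\ref{lem:ph_dvarin} combined with~\eqref{eq:ph_breg1}--\eqref{eq:ph_breg2}, which yields $\eta\le b(v_n^{l-1})\le b(1)$ uniformly; (b)~Jensen's inequality, which lets me bound the time sums of $\norm{f_n^l}_{L^2(\Omega)}^2$, $\norm{\delta w_n^l}_{H^1(\Omega)}^2$, $\norm{\delta^2 w_n^l}_{L^2(\Omega)}^2$, and $\norm{\delta g_n^l}_{H^{-1}_{D_1}(\Omega)}^2$ by the corresponding $L^2$-in-time norms, all finite thanks to~\eqref{eq:ph_bd1} and~\eqref{eq:ph_bd2}; and (c)~the continuous embedding $H^1(0,T;X)\hookrightarrow C^0([0,T];X)$, which gives $n$- and $j$-independent bounds on $\norm{\delta w_n^j}_{L^2(\Omega)}$ and $\norm{g_n^j}_{H^{-1}_{D_1}(\Omega)}$.

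For the coercivity side I would combine~\eqref{eq:ph_ell} with $b\ge\eta$ to obtain
\begin{equation*}
\mathcal E(u_n^j,v_n^j)\ge\frac{\eta c_0}{2}\norm{u_n^j}_{H^1(\Omega)}^2-\frac{\eta c_1}{2}\norm{u_n^j}_{L^2(\Omega)}^2,
\end{equation*}
while $\mathcal H(v_n^j)$ dominates a positive multiple of $\norm{v_n^j}_{H^1(\Omega)}^2$ up to an additive constant (using $\norm{1-v_n^j}_{L^2(\Omega)}^2\ge\tfrac12\norm{v_n^j}_{L^2(\Omega)}^2-|\Omega|$), and $\mathcal K(\delta u_n^j)=\tfrac12\norm{\delta u_n^j}_{L^2(\Omega)}^2$. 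The remaining $\norm{u_n^j}_{L^2(\Omega)}^2$ I would control by writing $u_n^j=u^0+\sum_{l=1}^j\tau_n\delta u_n^l$ and applying discrete Cauchy--Schwarz, which gives $\norm{u_n^j}_{L^2(\Omega)}^2\le C+T\sum_{l=1}^j\tau_n\norm{\delta u_n^l}_{L^2(\Omega)}^2$, a quantity already bounded by $C+T\sum_{l=1}^j\tau_nE_l$.

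Putting the pieces together and applying Young's inequality with appropriately small parameters, I expect to obtain an estimate of the form
\begin{equation*}
E_j\le C+\tfrac12 E_j+C'\sum_{l=1}^j\tau_nE_l,
\end{equation*}
where the coefficient $\tfrac12$ absorbs the boundary contributions $(\delta u_n^j,\delta w_n^j)_{L^2(\Omega)}$ and $\spr{g_n^j}{u_n^j-w_n^j}_{H^{-1}_{D_1}(\Omega)}$ appearing explicitly at the endpoint $j$ of~\eqref{eq:ph_denin}, while the sum gathers the contributions of the bilinear pairings appearing under $\sum_{l=1}^j$. The standard discrete Gronwall lemma then yields $E_j\le C$ uniformly in $n$ and $j\in\{1,\dots,n\}$, and reading off the individual bounds on $\norm{\delta u_n^j}_{L^2(\Omega)}$, $\norm{u_n^j}_{H^1(\Omega)}$, $\norm{v_n^j}_{H^1(\Omega)}$, and $\sum_{l=1}^j\tau_n\norm{\delta v_n^l}_{H^k(\Omega)}^2$ (via the equivalence of $\mathcal G_k$ with the squared $H^k$--norm) gives~\eqref{eq:ph_est}.

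The main obstacle is the bookkeeping required to absorb the terms proportional to $\norm{u_n^j}_{H^1(\Omega)}^2$ and $\norm{\delta u_n^j}_{L^2(\Omega)}^2$ coming from the endpoint pairings and from the elastic work term $(b(v_n^{l-1})\mathbb CEu_n^l,E\delta w_n^l)_{L^2(\Omega)}$ under the sum. For the latter it is crucial that $b$ is bounded from above on the relevant range of the phase--field, which relies on the one-sided bound $v_n^{l-1}\le 1$ provided by Lemma~\ref{lem:ph_dvarin}; without such an upper control the pairing could not be estimated linearly in $\norm{u_n^l}_{H^1(\Omega)}$ and the Gronwall closure would break down.
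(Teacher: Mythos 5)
Your Gronwall-based argument is correct, and it differs from the paper's own proof in its closing step. The paper does not set up a discrete Gronwall iteration; instead it introduces the global quantities $L_n:=\max_{j}\norm{\delta u_n^j}_{L^2(\Omega)}$ and $M_n:=\max_j\norm{u_n^j}_{H^1(\Omega)}$, estimates every term on the right-hand side of~\eqref{eq:ph_denin} \emph{linearly} in $L_n$ and $M_n$ (see~\eqref{eq:ph_duj}--\eqref{eq:ph_lmn}), uses the pointwise bound $\norm{u_n^j}_{L^2(\Omega)}\le TL_n+\norm{u^0}_{L^2(\Omega)}$ to handle the negative term in~\eqref{eq:ph_ell}, and concludes from a single algebraic inequality of the form $(L_n+M_n)^2\le\tilde C_1(L_n+M_n)+\tilde C_2$. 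Both proofs rest on the same three ingredients you isolate: the coercivity~\eqref{eq:ph_ell} together with $b\ge\eta$; the one-sided bound $v_n^{j-1}\le 1$ so that $b(v_n^{j-1})\le b(1)$, which makes the elastic-work term linear in $\norm{u_n^l}_{H^1(\Omega)}$; and the regularity hypotheses~\eqref{eq:ph_bd1}--\eqref{eq:ph_bd2} that give $n$-uniform bounds on the data terms. What your Gronwall route buys is a cleaner treatment of the lower-order term $-\tfrac{\eta c_1}{2}\norm{u_n^j}_{L^2(\Omega)}^2$ coming from~\eqref{eq:ph_ell}: by routing $\norm{u_n^j}_{L^2(\Omega)}^2\le C+2T\sum_{l\le j}\tau_n E_l$ into the cumulative sum rather than into the pointwise maximum, you never need to absorb a quantity that scales like $T^2L_n^2$ against the $\tfrac12 L_n^2$ produced by the kinetic energy, a step that in the paper's formulation works pointwise only for $T$ small enough and otherwise implicitly relies on iterating over subintervals (which is exactly what Gronwall does). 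One small bookkeeping point to keep track of in your scheme: the cumulative sum on the right of $E_j\le C+\tfrac12E_j+C'\sum_{l=1}^j\tau_nE_l$ runs up to $l=j$, so you must first move the $l=j$ contribution $C'\tau_nE_j$ to the left (harmless once $\tau_n<1/(2C')$, i.e.\ $n$ large; treat finitely many small $n$ directly) before invoking the discrete Gronwall lemma. With that adjustment the argument is complete and yields the asserted $n$-independent bound, from which the individual estimates in~\eqref{eq:ph_est} follow exactly as you describe, using the norm equivalence $\beta_0\norm{\cdot}_{H^k(\Omega)}^2\le\mathcal G_k(\cdot)$ to extract the bound on $\sum_j\tau_n\norm{\delta v_n^j}_{H^k(\Omega)}^2$.
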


\begin{proof}
Thanks to~\eqref{eq:ph_ell} and~\eqref{eq:ph_breg2} we can estimate from below the left--hand side of~\eqref{eq:ph_denin} as
\begin{equation}\label{eq:ph_duj}
\begin{aligned}
&\mathcal F(u^j_n, \delta u^j_n, v^j_n)+\sum_{l=1}^j \tau_n\mathcal G_k(\delta v_n^l)+\sum_{l=1}^j\tau_n^2 D_n^l\ge\frac{1}{2}\norm{\delta u^j_n}_{L^2(\Omega)}^2+\frac{\eta c_0}{2}\norm{u^j_n}_{H^1(\Omega)}^2-\frac{\eta c_1}{2}\norm{u_n^j}_{L^2(\Omega)}^2
\end{aligned}
\end{equation}
for every $j=1,\dots,n$. Let us now bound from above the right--hand side of~\eqref{eq:ph_duj}. We define
\begin{equation*}
L_n:=\max_{j=1,\dots,n}\norm{\delta u_n^j}_{L^2(\Omega)},\quad M_n:=\max_{j=1,\dots,n}\norm{u_n^j}_{H^1(\Omega)},
\end{equation*}
and we use~\eqref{eq:ph_bd1}--\eqref{eq:ph_id} to derive for every $j=1,\dots,n$ the following estimates:
\begin{align}
&\sum_{l=1}^j\tau_n(f_n^l,\delta u_n^l-\delta w_n^l)_{L^2(\Omega)}\le C_1L_n+C_2,\\
&(\delta u_n^j,\delta w^j_n)_{L^2(\Omega)}-(u^1,w_1(0))_{L^2(\Omega)}-\sum_{l=1}^j\tau_n(\delta u_n^{l-1},\delta^2 w_n^l)_{L^2(\Omega)}\le C_1 L_n+C_2,\\
&\spr{g^j_n}{u^j_n-w^j_n}_{H^{-1}_{D_1}(\Omega)}-\spr{g(0)}{u^0-w_1(0)}_{H^{-1}_{D_1}(\Omega)}-\sum_{l=1}^j\tau_n\spr{\delta g_n^l}{u_n^{l-1}-w_n^{l-1}}_{H^{-1}_{D_1}(\Omega)}\le C_1M_n+C_2,
\end{align}
for two positive constants $C_1$ and $C_2$ independent of $n$. Moreover, since $\mathbb C\in L^\infty(\Omega;\mathscr L(\mathbb R^{d\times d};\mathbb R^{d\times d}))$, $b$ is non decreasing, and $v_n^{j-1}\le 1$, we get 
\begin{equation}\label{eq:ph_lmn}
\sum_{l=1}^j\tau_n(b(v_n^{l-1})\mathbb CEu_n^l,E\delta w_n^l)_{L^2(\Omega)}\le b(1)\norm{\mathbb C}_{L^\infty(\Omega)}\sqrt{T}\norm{E\dot w_1}_{L^2(0,T;L^2(\Omega))} M_n
\end{equation} 
for every $j=1,\dots,n$. By combining~\eqref{eq:ph_denin} with~\eqref{eq:ph_duj}--\eqref{eq:ph_lmn} and the following estimate
\begin{equation*}
\norm{u^j_n}_{L^2(\Omega)}\le \sum_{l=1}^n\tau_n\norm{\delta u_n^l}_{L^2(\Omega)}+\norm{u^0}_{L^2(\Omega)}\le T L_n +\norm{u^0}_{L^2(\Omega)}\quad\text{for every }j=1,\dots,n,
\end{equation*} 
we obtain the existence of two positive constants $\tilde C_1$ and $\tilde C_2$, independent of $n$, such that $$(L_n+M_n)^2\le \tilde C_1(L_n+M_n)+\tilde C_2\quad\text{for every $n\in\mathbb N$}.$$ 
This implies that $L_n$ and $M_n$ are uniformly bounded in $n$. In particular, there exists a constant $C>0$, independent of $n$, such that 
\begin{equation*}
\mathcal K(\delta u^j_n)+\mathcal E(u^j_n, v^j_n)+\mathcal H(v^j_n)+\sum_{l=1}^j \tau_n\mathcal G_k(\delta v_n^l)+\sum_{l=1}^j\tau_n^2D_n^l\le C\quad\text{for every }j=1,\dots,n.
\end{equation*}
Finally, for $j=1,\dots,n$ we have
\begin{align*}
\min\left\{\frac{1}{4\e},\e\right\}\norm{v^j_n-1}_{H^1(\Omega)}^2\le\mathcal H(v^j_n)\le C,\quad\beta_0\sum_{j=1}^n\tau_n\norm{\delta v_n^j}_{H^k(\Omega)}^2\le \sum_{j=1}^n\tau_n\mathcal G_k(\delta v_n^j)\le C,
\end{align*}
which gives the remaining estimates.
\end{proof}

\begin{remark}\label{rem:ph_usec}
By combining together~\eqref{eq:ph_un2} and~\eqref{eq:ph_est} we also obtain
\begin{equation*}
\sum_{j=1}^n\tau_n\norm{\delta^2u_n^j}^2_{H^{-1}_{D_1}(\Omega)}+\max_{j=1,\dots,n}\norm{v_n^j}_{H^k(\Omega)}\le C
\end{equation*}
for a positive constant $C$ independent of $n$. Indeed, by~\eqref{eq:ph_un2}, for every $j=1,\dots,n$ we have
\begin{equation*}
\norm{\delta^2 u_n^j}_{H^{-1}_{D_1}(\Omega)}=\sup_{\substack{\psi\in H^1_{D_1}(\Omega;\mathbb R^d) \\ \norm{ \psi}_{H^1(\Omega)}\le 1}}\abs{(\delta^2u_n^j,\psi)_{L^2(\Omega)}}\le b(1)\norm{\mathbb C}_{L^\infty(\Omega)}\norm{Eu_n^j}_{L^2(\Omega)}+\norm{ f_n^j}_{L^2(\Omega)}+\norm{g_n^j}_{H^{-1}_{D_1}(\Omega)}.
\end{equation*}
Hence, thanks to~\eqref{eq:ph_bd2} and~\eqref{eq:ph_est}, there exists a constant $C>0$, independent of $n$, such that 
\begin{equation*}
\sum_{j=1}^n \tau_n\norm{\delta^2 u_n^j}^2_{H^{-1}_{D_1}(\Omega)}\le C(1+\norm{f}_{L^2(0,T;L^2(\Omega))}+\norm{g}_{L^\infty(0,T;H^{-1}_{D_1}(\Omega))}).
\end{equation*}
Finally, also $\norm{v^j_n}_{H^k(\Omega)}$ is uniformly bounded with respect to $j$ and $n$, since 
\begin{equation*}\norm{v^j_n}_{H^k(\Omega)}\le \sqrt{T}\left(\sum_{l=1}^n\tau_n\norm{\delta v_n^l}^2_{H^k(\Omega)}\right)^{1/2}+\norm{v^0}_{H^k(\Omega)}\quad\text{for every }j=1,\dots,n.
\end{equation*}
\end{remark}

We now use the family $\{(u_n^j,v_n^j)\}_{j=1}^n$ to construct a generalized solution to the problem~\eqref{eq:ph_elsys}--\eqref{eq:ph_mincon}. We denote by $u_n\colon [0,T]\to H^1(\Omega;\R^d)$ the piecewise affine interpolant of $\{u_n^j\}_{j=1}^n$, which is defined as
\begin{align*}
&u_n(t):=u_n^j+(t-j\tau_n)\delta u_n^j\quad\text{for } t\in[(j-1)\tau_n,j\tau_n]\text{ and } j=1,\dots,n.
\end{align*}
Moreover, let us define the backward interpolant $\overline u_n\colon [0,T]\to H^1(\Omega;\R^d)$ and the forward interpolant $\underline u_n\colon [0,T]\to H^1(\Omega;\R^d)$ in the following way:
\begin{align*}
&\overline u_n(0)=u^0_n,\quad\overline u_n(t):=u_n^j\quad\text{for }t\in((j-1)\tau_n,j\tau_n]\text{ and } j=1,\dots,n,\\
&\underline u_n(T)=u^n_n,\quad\underline u_n(t):=u_n^{j-1}\quad\text{for }t\in[(j-1)\tau_n,j\tau_n)\text{ and } j=1,\dots,n.
\end{align*}
Similarly, we define the piecewise affine interpolant $v_n\colon [0,T]\to H^1(\Omega)$ of $\{v_n^j\}_{j=1}^n$, as well as the backward interpolant $\overline v_n\colon [0,T]\to H^1(\Omega)$, and the forward interpolant $\underline v_n\colon [0,T]\to H^1(\Omega)$. Finally, we consider the  piecewise affine interpolant $u_n'\colon [0,T]\to L^2(\Omega;\R^d)$ of $\{\delta u_n^j\}_{j=1}^n$, together with the backward interpolant $\overline u_n'\colon [0,T]\to L^2(\Omega;\R^d)$ and the forward interpolant  $\underline u_n'\colon [0,T]\to L^2(\Omega;\R^d)$. Notice that $u_n\in H^1(0,T;L^2(\Omega;\mathbb R^d))$, $u'_n\in H^1(0,T;H^{-1}_{D_1}(\Omega;\mathbb R^d))$, and $v_n\in H^1(0,T;H^k(\Omega))$, with $\dot u_n(t)=\overline u_n'(t)=\delta u_n^j$, $\dot u_n'(t)=\delta^2 u_n^j$, and $\dot v_n(t)=\delta v_n^j$ for $t\in((j-1)\tau_n,j\tau_n)$ and $j=1,\dots,n$.

\begin{lemma}\label{lem:ph_con}
There exist a subsequence of $n$, not relabeled, and two functions
\begin{align*}
&u\in L^\infty(0,T;H^1(\Omega;\mathbb R^d))\cap W^{1,\infty}(0,T;L^2(\Omega;\mathbb R^d))\cap H^2(0,T;H^{-1}_{D_1}(\Omega;\mathbb R^d)),\\
&v\in L^\infty(0,T;H^1(\Omega))\cap H^1(0,T;H^k(\Omega)),
\end{align*}
such that the following convergences hold as $n\to\infty$:
\begin{align*}
&u_n\rightharpoonup u\quad\text{in }H^1(0,T;L^2(\Omega;\mathbb R^d)),& & u_n'\rightharpoonup \dot u\quad\text{in }H^1(0,T;H^{-1}_{D_1}(\Omega;\mathbb R^d)),\\
&u_n\to u\quad\text{in }C^0([0,T];L^2(\Omega;\mathbb R^d)),& & u_n'\to \dot u\quad\text{in }C^0([0,T];H^{-1}_{D_1}(\Omega;\mathbb R^d)),\\
&\overline u_n,\underline u_n\rightharpoonup u\quad\text{in }L^2(0,T;H^1(\Omega;\mathbb R^d)),& & \overline u_n',\underline u_n'\rightharpoonup \dot u\quad\text{in }L^2(0,T;L^2(\Omega;\mathbb R^d)),\\
&v_n\rightharpoonup v\quad\text{in }H^1(0,T;H^k(\Omega)),& &v_n\to v\quad\text{in }C^0([0,T];L^2(\Omega)),\\
&v_n\rightharpoonup v\quad\text{in }L^2(0,T;H^1(\Omega)),& &\overline v_n,\underline v_n\rightharpoonup v\quad\text{in }L^2(0,T;H^1(\Omega)).
\end{align*}
\end{lemma}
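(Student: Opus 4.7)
The plan is to translate the discrete bounds of Lemma~\ref{lem:ph_bound} and Remark~\ref{rem:ph_usec} into uniform bounds on the various interpolants, extract weak and weak--$*$ subsequential limits, upgrade to strong $C^0$ convergence by Arzel\`a--Ascoli/Simon--type arguments, and finally show that the three families of interpolants (piecewise affine, backward, forward) share the same limit.

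First I would record that~\eqref{eq:ph_est} together with Remark~\ref{rem:ph_usec} give, uniformly in $n$,
\begin{align*}
&\norm{u_n}_{W^{1,\infty}(0,T;L^2(\Omega))}+\norm{u_n}_{L^\infty(0,T;H^1(\Omega))}\le C,\\
&\norm{u_n'}_{L^\infty(0,T;L^2(\Omega))}+\norm{u_n'}_{H^1(0,T;H^{-1}_{D_1}(\Omega))}\le C,\\
&\norm{v_n}_{L^\infty(0,T;H^1(\Omega))}+\norm{v_n}_{H^1(0,T;H^k(\Omega))}\le C,
\end{align*}
together with the same $L^\infty$--in--time bounds for $\overline u_n,\underline u_n,\overline u_n',\underline u_n',\overline v_n,\underline v_n$. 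By the Banach--Alaoglu theorem, along a (not relabelled) subsequence one obtains weak and weak--$*$ limits $u,\tilde u,v$ with the required regularity classes. To see that $\tilde u=\dot u$, I would use that on each $((j-1)\tau_n,j\tau_n)$ the identity $u_n'(t)-\dot u_n(t)=(t-j\tau_n)\,\delta^2u_n^j$ yields
\begin{equation*}
\norm{u_n'-\dot u_n}_{L^2(0,T;H^{-1}_{D_1}(\Omega))}^2\le \tau_n^2\sum_{j=1}^n\tau_n\norm{\delta^2u_n^j}_{H^{-1}_{D_1}(\Omega)}^2\le C\tau_n^2\longrightarrow 0
\end{equation*}
by Remark~\ref{rem:ph_usec}; uniqueness of the weak limit then forces $\tilde u=\dot u$. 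Completely analogous $O(\tau_n)$--estimates in $L^2(0,T;L^2(\Omega))$ for $\overline u_n-u_n$, $\underline u_n-u_n$, and the corresponding differences involving $\overline u_n',\underline u_n',\overline v_n,\underline v_n$, identify the weak limits of all the interpolants with $u$, $\dot u$, and $v$ respectively.

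The strong $C^0$--convergences then follow from Arzel\`a--Ascoli. For $u_n\to u$ in $C^0([0,T];L^2(\Omega;\R^d))$, the sequence is equi--Lipschitz $[0,T]\to L^2(\Omega;\R^d)$ by the $W^{1,\infty}(0,T;L^2)$ bound, and pointwise precompact in $L^2(\Omega;\R^d)$ by the $L^\infty(0,T;H^1)$ bound and the compact Rellich embedding $H^1(\Omega)\hookrightarrow L^2(\Omega)$. For $u_n'\to \dot u$ in $C^0([0,T];H^{-1}_{D_1}(\Omega;\R^d))$, the $H^1(0,T;H^{-1}_{D_1})$--bound yields $1/2$--H\"older equicontinuity, and the compact embedding $L^2(\Omega;\R^d)\hookrightarrow H^{-1}_{D_1}(\Omega;\R^d)$ (the adjoint of the compact embedding $H^1_{D_1}\hookrightarrow L^2$) gives pointwise precompactness. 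For $v_n\to v$ in $C^0([0,T];L^2(\Omega))$, I would invoke a Simon--type compactness result: $v_n$ is bounded in $L^\infty(0,T;H^1(\Omega))$ with $\dot v_n$ bounded in $L^2(0,T;H^k(\Omega))\hookrightarrow L^2(0,T;L^2(\Omega))$, and $H^1(\Omega)\hookrightarrow L^2(\Omega)$ is compact.

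The main technical point I anticipate is the careful bookkeeping between the interpolant $u_n'$ of the discrete first differences $\delta u_n^j$ and the actual derivative $\dot u_n=\overline u_n'$ of the affine interpolant $u_n$; once this is untangled via the $O(\tau_n)$--closeness step above, the remaining compactness arguments are entirely standard. A minor nuisance is that the $H^1(0,T;H^{-1}_{D_1})$--bound only provides $1/2$--H\"older equicontinuity in time, which is however perfectly sufficient to invoke Arzel\`a--Ascoli.
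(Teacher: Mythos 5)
Your proposal is correct and follows essentially the same route as the paper: extract weak limits from the uniform bounds of Lemma~\ref{lem:ph_bound} and Remark~\ref{rem:ph_usec}, upgrade to strong $C^0$ convergence via Aubin--Lions--Simon compactness (which you unpack as Arzel\`a--Ascoli plus compactness of the spatial embeddings), and identify the limits of the backward/forward/affine interpolants through $O(\tau_n)$-- or $O(\sqrt{\tau_n})$--closeness estimates. The paper cites Simon's lemma directly where you make the equicontinuity argument explicit, and it bounds $u_n'-\overline u_n'$ pointwise in time of order $\sqrt{\tau_n}$ where you bound it in $L^2$ of order $\tau_n$, but these are immaterial variations.
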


\begin{proof}
Thanks to the estimate~\eqref{eq:ph_est}, the sequence $\{u_n\}_{n\in\mathbb N}\subset L^\infty(0,T;H^1(\Omega;\mathbb R^d))\cap W^{1,\infty}(0,T;L^2(\Omega;\mathbb R^d))$ is uniformly bounded. Hence, by Aubin--Lions's lemma (see~\cite[Corollary 4]{S}), there exist a subsequence of $n$, not relabeled, and a function
\begin{equation*}
u\in L^\infty(0,T;H^1(\Omega;\mathbb R^d))\cap W^{1,\infty}(0,T;L^2(\Omega;\mathbb R^d)),
\end{equation*}
such that 
\begin{align*}
&u_n\rightharpoonup u\quad\text{in }H^1(0,T;L^2(\Omega;\mathbb R^d)),\quad  u_n\to u\quad\text{in }C^0([0,T];L^2(\Omega;\mathbb R^d))\quad\text{as $n\to\infty$}.
\end{align*}
Moreover, the sequence $\{\overline u_n\}_{n\in\mathbb N}\subset L^\infty(0,T;H^1(\Omega;\mathbb R^d))$ is uniformly bounded, and satisfies 
\begin{equation}\label{eq:ph_pun}
\norm{u_n(t)-\overline u_n(t)}_{L^2(\Omega)}\le \tau_n\norm{\dot u_n}_{L^\infty(0,T;L^2(\Omega))}\le C\tau_n\quad\text{for every $t\in[0,T]$ and $n\in\mathbb N$},
\end{equation}
where $C$ is a positive constant independent of $n$ and $t$. Therefore, there exists a further subsequence, not relabeled, such that
\begin{align*}
& \overline u_n\rightharpoonup u\quad\text{in }L^2(0,T;H^1(\Omega;\mathbb R^d)),\quad  \overline u_n\to u\quad\text{in }L^\infty(0,T;L^2(\Omega;\mathbb R^d))\quad\text{as $n\to\infty$}.
\end{align*}
Similarly, we have
\begin{align*}
& \underline u_n\rightharpoonup u\quad\text{in }L^2(0,T;H^1(\Omega;\mathbb R^d)),\quad  \underline u_n\to u\quad\text{in }L^\infty(0,T;L^2(\Omega;\mathbb R^d))\quad\text{as $n\to\infty$}.
\end{align*}
	
Let us now consider the sequence $\{u_n'\}_{n\in\mathbb N}\subset L^\infty(0,T;L^2(\Omega;\mathbb R^d))\cap H^1(0,T;H^{-1}_{D_1}(\Omega;\mathbb R^d))$. Since it is uniformly bounded with respect to $n$, we can apply again Aubin--Lions's lemma to deduce the existence of
\begin{equation*}
z\in L^\infty(0,T;L^2(\Omega;\mathbb R^d))\cap H^1(0,T;H^{-1}_{D_1}(\Omega;\mathbb R^d))
\end{equation*}
such that, up to a further (not relabeled) subsequence
\begin{align*}
&u_n'\rightharpoonup z\quad\text{in }H^1(0,T;H^{-1}_{D_1}(\Omega;\mathbb R^d)),\quad  u_n'\to z\quad\text{in }C^0([0,T];H^{-1}_{D_1}(\Omega;\mathbb R^d))\quad\text{as $n\to\infty$}.
\end{align*}
Furthermore, we have 
\begin{equation}\label{eq:ph_pdotun}
\norm{u_n'(t)-\dot u_n(t)}_{H^{-1}_{D_1}(\Omega)}=\norm{u_n'(t)-\overline u_n'(t)}_{H^{-1}_{D_1}(\Omega)}\le \sqrt{\tau_n}\norm{\dot u_n'}_{L^2(0,T;H^{-1}_{D_1}(\Omega))}\le C\sqrt{\tau_n}
\end{equation}
for every $t\in[0,T]$ and $n\in\mathbb N$, with $C>0$ independent of $n$ and $t$. This fact implies that $z=\dot u$, and
\begin{align*}
\overline u_n'\rightharpoonup \dot u\quad\text{in }L^2(0,T;L^2(\Omega;\mathbb R^d)),\quad \overline u_n'\to \dot u\quad\text{in }L^2(0,T;H^{-1}_{D_1}(\Omega;\mathbb R^d))\quad\text{as $n\to\infty$}.
\end{align*}
In a similar way, we get
\begin{align*}
\underline u_n'\rightharpoonup \dot u\quad\text{in }L^2(0,T;L^2(\Omega;\mathbb R^d)),\quad \underline u_n'\to \dot u\quad\text{in }L^2(0,T;H^{-1}_{D_1}(\Omega;\mathbb R^d))\quad\text{as $n\to\infty$}.
\end{align*}

Finally, the thesis for the sequences $\{v_n\}_{n\in\mathbb N}$, $\{\overline v_n\}_{n\in\mathbb N}$, and $\{\underline v_n\}_{n\in\mathbb N}$ is obtained as before, by using~\eqref{eq:ph_est} and the compactness of the embedding $H^1(\Omega)\hookrightarrow L^2(\Omega)$.
\end{proof} 

\begin{remark}\label{rem:ph_pointcon}
As pointed out in Remark~\ref{rem:ph_weak_con}, we have $u\in C_w^0([0,T];H^1(\Omega;\R^d))$, $\dot u\in C_w^0([0,T];L^2(\Omega;\R^d))$, and $v\in C_w^0([0,T];H^1(\Omega))$. By using the estimate~\eqref{eq:ph_est}, we get
$$\norm{u_n(t)}_{H^1(\Omega)}+\norm{u_n'(t)}_{L^2(\Omega)}\le C\quad\text{for every $t\in[0,T]$ and $n\in\mathbb N$}$$
for a constant $C>0$ independent of $n$ and $t$. Hence,  for every $t\in[0,T]$ we derive
\begin{align*}
&u_n(t)\rightharpoonup u(t)\quad\text{in }H^1(\Omega;\R^d),\quad u_n'(t)\rightharpoonup \dot u(t)\quad\text{in }L^2(\Omega;\R^d)\quad\text{as $n\to\infty$},
\end{align*}
thanks to the previous convergences. In particular, for every $t\in[0,T]$ we can use~\eqref{eq:ph_pun} and~\eqref{eq:ph_pdotun} to obtain
\begin{align*}
&\overline u_n(t)\rightharpoonup u(t)\quad\text{in }H^1(\Omega;\R^d),\quad \overline u_n'(t)\rightharpoonup \dot u(t)\quad\text{in }L^2(\Omega;\R^d)\quad\text{as $n\to\infty$},\\
&\underline u_n(t)\rightharpoonup u(t)\quad\text{in }H^1(\Omega;\R^d),\quad \underline u_n'(t)\rightharpoonup \dot u(t)\quad\text{in }L^2(\Omega;\R^d)\quad\text{as $n\to\infty$}.
\end{align*}
With a similar argument, for every $t\in[0,T]$ we have
\begin{align*}
v_n(t)\rightharpoonup v(t),\quad\overline v_n(t)\rightharpoonup v(t),\quad \underline v_n(t)\rightharpoonup v(t)\quad\text{in }H^1(\Omega)\quad\text{as $n\to\infty$}.
\end{align*}
\end{remark}

We are now in position to pass to the limit in the discrete problem~\eqref{eq:ph_un2}.

\begin{lemma}\label{lem:ph_lim1}
The pair $(u,v)$ of Lemma~\ref{lem:ph_con} is a generalized solution to the problem~\eqref{eq:ph_elsys}--\eqref{eq:ph_bc3}. Moreover, $(u,v)$ satisfies the initial conditions~\eqref{eq:ph_ic} and the irreversibility condition~\eqref{eq:ph_irr}. Finally, if $k=0$, $w_2\ge 0$ on $\partial_{D_2}\Omega$, $v^0\ge 0$ in $\Omega$, and $b(s)=(\max\{s,0\})^2+\eta$ for $s\in\R$, then $v(t)\ge 0$ in $\Omega$ for every $t\in[0,T]$.
\end{lemma}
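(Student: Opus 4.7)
The plan is to pass to the limit as $n\to\infty$ in the discrete equation~\eqref{eq:ph_un2}, using the convergences of Lemma~\ref{lem:ph_con} and Remark~\ref{rem:ph_pointcon}. Rephrased through the interpolants defined before Lemma~\ref{lem:ph_con}, the identity~\eqref{eq:ph_un2} reads, for a.e.\ $t\in(0,T)$ and every $\psi\in H^1_{D_1}(\Omega;\R^d)$,
\[
\spr{\dot u_n'(t)}{\psi}_{H^{-1}_{D_1}(\Omega)}+(b(\underline v_n(t))\mathbb C E\overline u_n(t),E\psi)_{L^2(\Omega)}=(\overline f_n(t),\psi)_{L^2(\Omega)}+\spr{\overline g_n(t)}{\psi}_{H^{-1}_{D_1}(\Omega)},
\]
where $\overline f_n$ and $\overline g_n$ are the backward piecewise constant interpolants of $\{f_n^j\}$ and $\{g_n^j\}$. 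I would test this against $\varphi\otimes\psi$ with $\varphi\in C^\infty_c(0,T)$, integrate in time, and pass to the limit term by term. The weak convergence $\dot u_n'\rightharpoonup \ddot u$ in $L^2(0,T;H^{-1}_{D_1}(\Omega;\R^d))$ from Lemma~\ref{lem:ph_con}, together with the standard strong convergences $\overline f_n\to f$ in $L^2(0,T;L^2(\Omega;\R^d))$ and $\overline g_n\to g$ in $L^2(0,T;H^{-1}_{D_1}(\Omega;\R^d))$, dispose of the linear terms.

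The principal obstacle is the nonlinear term $b(\underline v_n)\mathbb C E\overline u_n$. A short computation using~\eqref{eq:ph_est} gives $\norm{v_n-\underline v_n}_{L^2(0,T;L^2(\Omega))}\le C\tau_n$, hence $\underline v_n\to v$ strongly in $L^2(0,T;L^2(\Omega))$. Along a further subsequence $\underline v_n\to v$ almost everywhere in $(0,T)\times\Omega$, and the continuity of $b$ yields $b(\underline v_n)\to b(v)$ a.e. Because $\underline v_n\le 1$ and $b$ is non--decreasing with $\eta\le b(s)\le b(1)$ for $s\le 1$, the sequence $b(\underline v_n)$ is uniformly bounded in $L^\infty((0,T)\times\Omega)$, so dominated convergence yields $b(\underline v_n)\to b(v)$ strongly in $L^q((0,T)\times\Omega)$ for every $q<\infty$. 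Coupling this with $\mathbb C E\overline u_n\rightharpoonup \mathbb C Eu$ in $L^2(0,T;L^2(\Omega;\R^{d\times d}))$, the standard weak--strong convergence lemma gives $b(\underline v_n)\mathbb C E\overline u_n\rightharpoonup b(v)\mathbb C Eu$ in $L^2(0,T;L^2(\Omega;\R^{d\times d}))$. Passing to the limit produces~\eqref{eq:ph_weak_form} in the distributional sense in time, and since $\ddot u\in L^2(0,T;H^{-1}_{D_1}(\Omega;\R^d))$, this upgrades to the pointwise statement for a.e.\ $t\in(0,T)$.

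For the initial conditions I would use the strong convergences $u_n\to u$ in $C^0([0,T];L^2(\Omega;\R^d))$, $u_n'\to \dot u$ in $C^0([0,T];H^{-1}_{D_1}(\Omega;\R^d))$, and $v_n\to v$ in $C^0([0,T];L^2(\Omega))$, combined with $u_n(0)=u^0$, $u_n'(0)=\delta u_n^0=u^1$ (by the construction $u_n^{-1}=u^0-\tau_n u^1$), and $v_n(0)=v^0$. Irreversibility transfers from the discrete monotonicity $v_n^j\le v_n^{j-1}$ to the affine interpolant $v_n$; for every $0\le s\le t\le T$ and every $\phi\in L^2(\Omega)$ with $\phi\ge 0$, the inequality $\int_\Omega(v_n(t)-v_n(s))\phi\,\de x\le 0$ is preserved in the $L^2$--limit, giving $v(t)\le v(s)$ a.e. For the boundary constraints, Remark~\ref{rem:ph_pointcon} yields $\overline u_n(t)\rightharpoonup u(t)$ and $\overline v_n(t)\rightharpoonup v(t)$ in $H^1$ for every $t$; using the weak closedness of $H^1_{D_1}(\Omega;\R^d)$ and $H^1_{D_2}(\Omega)$ together with $w_1\in C^0([0,T];H^1(\Omega;\R^d))$ (from~\eqref{eq:ph_bd1}), we obtain $u(t)-w_1(t)\in H^1_{D_1}(\Omega;\R^d)$ and $v(t)-w_2\in H^1_{D_2}(\Omega)$, while $v(t)\le 1$ follows by extracting an a.e.\ convergent subsequence of $\overline v_n(t)\to v(t)$ in $L^2(\Omega)$. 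Under the final set of hypotheses, Lemma~\ref{lem:ph_dvarin} gives $v_n^j\ge 0$ for every $j$ and $n$, and the same $L^2$--limit argument yields $v(t)\ge 0$ a.e.\ for every $t\in[0,T]$.
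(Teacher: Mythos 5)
Your proposal is correct and follows essentially the same strategy as the paper's: pass to the limit in the discrete weak form~\eqref{eq:ph_un2} via the interpolants, using the weak/strong convergences of Lemma~\ref{lem:ph_con} and Remark~\ref{rem:ph_pointcon} for the linear terms, and a weak--strong pairing for the product $b(\underline v_n)\mathbb C E\overline u_n$. The only differences are cosmetic: you recover the pointwise a.e.\ statement by testing against $\varphi\otimes\psi$ and arguing distributionally in time (then upgrading since $\ddot u\in L^2(0,T;H^{-1}_{D_1})$), whereas the paper integrates over arbitrary subintervals $[t_1,t_2]$ and invokes Lebesgue's differentiation theorem plus a density argument; and you obtain $b(\underline v_n)\to b(v)$ via strong convergence $\underline v_n\to v$ in $L^2(0,T;L^2(\Omega))$ (from $\|v_n-\underline v_n\|_{L^2}\le C\tau_n$ and $v_n\to v$ in $C^0([0,T];L^2)$), an a.e.\ extraction, and dominated convergence in $(0,T)\times\Omega$, whereas the paper goes through the pointwise-in-$t$ convergence $\underline v_n(t)\to v(t)$ in $L^2(\Omega)$ for every $t$ followed by dominated convergence in time. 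Both routes are standard and equivalent. Your treatment of the initial conditions, the irreversibility, the Dirichlet constraints (via weak closedness of $H^1_{D_1},H^1_{D_2}$ and continuity of $w_1$), the bound $v(t)\le 1$, and the non-negativity from Lemma~\ref{lem:ph_dvarin} all match the paper's argument.
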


\begin{proof}
The pair $(u,v)$ given by Lemma~\ref{lem:ph_con} satisfies~\eqref{eq:ph_greg1},~\eqref{eq:ph_greg3}, and the initial conditions~\eqref{eq:ph_ic}, since $u^0=u_n(0)\rightharpoonup u(0)$ in $H^1(\Omega;\mathbb R^d)$, $u^1=u_n'(0)\rightharpoonup \dot u(0)$ in $L^2(\Omega;\mathbb R^d)$, and $v^0=v_n(0)\rightharpoonup v(0)$ in $H^1(\Omega)$ as $n\to\infty$. If we consider the piecewise affine interpolant $w_n$ of $\{w_n^j\}_{j=1}^n$, for every $t\in[0,T]$ we have $u_n(t)- w_n(t)\in H^1_{D_1}(\Omega;\R^d)$ for every $n\in\mathbb N$ and $w_n(t)\to w_1(t)$ in $H^1(\Omega;\R^d)$ as $n\to\infty$. Therefore, the function $u$ satisfies~\eqref{eq:ph_greg2}. Similarly, $v_n(t)-w_2\in H^1_{D_2}(\Omega)$ and $v_n(t)\le v_n(s)\le 1$ in $\Omega$ for every $0\le s\le t\le T$ and $n\in\mathbb N$, which give~\eqref{eq:ph_greg4} and~\eqref{eq:ph_irr}. Finally, if $k=0$, $w_2\ge 0$ on $\partial_{D_2}\Omega$, $v^0\ge 0$ in $\Omega$, and $b(s)=(\max\{s,0\})^2+\eta$ for $s\in\R$, then for every $t\in[0,T]$ we deduce $v_n(t)\ge 0$ in $\Omega$ by Lemma~\ref{lem:ph_dvarin}, which implies $v(t)\ge 0$ in $\Omega$. 

It remains to prove the validity of the equation~\eqref{eq:ph_weak_form} for a.e. $t\in(0,T)$. For every $j=1,\dots,n$ we know that $(u_n^j,v_n^j)$ satisfies~\eqref{eq:ph_un2}. In particular, by integrating it in $[t_1,t_2]\subseteq [0,T]$ and using the previous notation, we derive
\begin{equation}
\begin{aligned}\label{lem:ph_intun}
&\int_{t_1}^{t_2}\spr{\dot u_n'(t)}{\psi}_{H^{-1}_{D_1}(\Omega)}\de t+\int_{t_1}^{t_2}(b(\underline v_n(t))\mathbb C E\overline u_n(t),E\psi)_{L^2(\Omega)}\de t\\
&=\int_{t_1}^{t_2}(\overline f_n(t),\psi)_{L^2(\Omega)}\de t+\int_{t_1}^{t_2}\spr{\overline g_n(t)}{\psi}_{H^{-1}_{D_1}(\Omega)}\de t
\end{aligned}
\end{equation}
for every $\psi\in H^1_{D_1}(\Omega;\mathbb R^d)$, where $\overline f_n$ and $\overline g_n$ are the backward interpolants of $\{f_n^j\}_{j=1}^n$ and $\{g_n^j\}_{j=1}^n$, respectively. We now pass to the limit as $n\to\infty$ in~\eqref{lem:ph_intun}. For the first term we have
\begin{equation*}
\lim_{n\to\infty}\int_{t_1}^{t_2}\spr{\dot u_n'(t)}{\psi}_{H^{-1}_{D_1}(\Omega)}\de t=\int_{t_1}^{t_2}\spr{\ddot u(t)}{\psi}_{H^{-1}_{D_1}(\Omega)}\de t,
\end{equation*}
since $\dot u_n'\rightharpoonup\ddot u$ in $L^2(0,T;H^{-1}_{D_1}(\Omega;\mathbb R^d))$ as $n\to\infty$. Moreover, it is easy to check that $\overline f_n$ converges strongly to $f$ in $L^2(0,T;L^2(\Omega;\mathbb R^d))$, and $\overline g_n$ converges strongly to $g$ in $L^2(0,T;H^{-1}_{D_1}(\Omega;\mathbb R^d))$ as $n\to\infty$, which implies
\begin{equation*}
\lim_{n\to\infty}\left[\int_{t_1}^{t_2}(\overline f_n(t),\psi)_{L^2(\Omega)}\de t+\int_{t_1}^{t_2}\spr{\overline g_n(t)}{\psi}_{H^{-1}_{D_1}(\Omega)}\de t\right]=\int_{t_1}^{t_2}(f(t),\psi)_{L^2(\Omega)}\de t+\int_{t_1}^{t_2}\spr{g(t)}{\psi}_{H^{-1}_{D_1}(\Omega)}\de t.
\end{equation*}
It remains to analyze the second term of~\eqref{lem:ph_intun}. By the previous remark and using the compactness of the embedding $H^1(\Omega)\hookrightarrow L^2(\Omega)$, we get that $\underline v_n(t)\to v(t)$ in $L^2(\Omega)$ as $n\to\infty$ for every $t\in[0,T]$. Thanks to the estimate 
$$\abs{b(\underline v_n(t,x))\mathbb C(x)E\psi(x)}\le b(1)\norm{\mathbb C}_{L^\infty(\Omega)}\abs{E\psi(x)}\quad\text{for every $t\in[0,T]$ and a.e. $x\in\Omega$}$$ 
and the dominated convergence theorem, we conclude that $b(\underline v_n)\mathbb C E\psi\to b(v)\mathbb C E\psi$ in $L^2(0,T;L^2(\Omega;\mathbb R^{d\times d}))$. Hence, we obtain
\begin{align*}
\lim_{n\to\infty}\int_{t_1}^{t_2}(b(\underline v_n(t))\mathbb CE\overline u_n(t),E\psi)_{L^2(\Omega)}\de t&=\int_{t_1}^{t_2}(b(v(t))\mathbb C Eu(t),E\psi)_{L^2(\Omega)}\de t,
\end{align*}
since $E\overline u_n\rightharpoonup Eu$ in $L^2(0,T;L^2(\Omega;\mathbb R^{d\times d}))$. Therefore, the pair $(u,v)$ solves
\begin{equation*}
\int_{t_1}^{t_2}\spr{\ddot u(t)}{\psi}_{H^{-1}_{D_1}(\Omega)}\de t+\int_{t_1}^{t_2}(b(v(t))\mathbb C Eu(t),E\psi)_{L^2(\Omega)}\de t=\int_{t_1}^{t_2}(f(t),\psi)_{L^2(\Omega)}\de t+\int_{t_1}^{t_2}\spr{g(t)}{\psi}_{H^{-1}_{D_1}(\Omega)}\de t
\end{equation*}
for every $\psi\in H^1_{D_1}(\Omega;\mathbb R^d)$ and $[t_1,t_2]\subseteq[0,T]$. Let us choose a countable dense set $\mathscr D\subset H^1_{D_1}(\Omega;\mathbb R^d)$. By Lebesgue's differentiation theorem, we obtain that the pair $(u,v)$ solves~\eqref{eq:ph_weak_form} for a.e. $t\in(0,T)$ and for every $\psi\in \mathscr D$. Finally, we use the density of $\mathscr D$ in $H^1_{D_1}(\Omega;\R^d)$ to conclude that the equation~\eqref{eq:ph_weak_form} is satisfied for every $\psi\in H^1_{D_1}(\Omega;\mathbb R^d)$.
\end{proof}

In the next lemma we exploit the inequality~\eqref{eq:ph_dvarin} to prove~\eqref{eq:ph_mincon}. 

\begin{lemma}\label{lem:ph_lim2}
The pair $(u,v)$ of Lemma~\ref{lem:ph_con} satisfies for a.e. $t\in(0,T)$ the crack stability condition~\eqref{eq:ph_mincon}.
\end{lemma}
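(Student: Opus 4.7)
The plan is to pass to the limit $n\to\infty$ in the discrete variational inequality~\eqref{eq:ph_dvarin}. Since the constraint $v^*\le v(t)$ is time--dependent, I would fix a direction $\chi\in H^1_{D_2}(\Omega)\cap H^k(\Omega)$ with $\chi\le 0$ in $\Omega$ and use in~\eqref{eq:ph_dvarin} the admissible test $v^*=\overline v_n(t)+\chi$, which satisfies $v^*\le\overline v_n(t)\le\underline v_n(t)=v_n^{j-1}$ on each interval $((j-1)\tau_n,j\tau_n]$. In terms of the interpolants this reads, for a.e. $t\in(0,T)$,
\begin{equation*}
\mathcal E(\overline u_n(t),\overline v_n(t)+\chi)-\mathcal E(\overline u_n(t),\overline v_n(t))+\partial\mathcal H(\overline v_n(t))[\chi]+\sum_{i=0}^k\alpha_i(\nabla^i\dot v_n(t),\nabla^i\chi)_{L^2(\Omega)}\ge 0.
\end{equation*}
I would multiply by an arbitrary $\eta\in C_c^\infty((0,T))$ with $\eta\ge 0$, integrate in time, and pass to the limit term by term.

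The weak convergence $\dot v_n\rightharpoonup\dot v$ in $L^2(0,T;H^k(\Omega))$ from Lemma~\ref{lem:ph_con} handles the linear $\sum_i\alpha_i$ term. Since $\partial\mathcal H(\overline v_n(t))[\chi]=\tfrac{1}{2\e}(\overline v_n(t)-1,\chi)_{L^2}+2\e(\nabla\overline v_n(t),\nabla\chi)_{L^2}$ depends linearly on $\overline v_n(t)$ and $\nabla\overline v_n(t)$, Remark~\ref{rem:ph_pointcon} (strong $L^2$ and weak $H^1$ pointwise convergences) yields pointwise convergence in $t$, which the uniform bound~\eqref{eq:ph_est} upgrades to convergence of the time integral via dominated convergence.

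The delicate passage is the elastic difference
\begin{equation*}
A_n(t):=\mathcal E(\overline u_n(t),\overline v_n(t)+\chi)-\mathcal E(\overline u_n(t),\overline v_n(t))=-\frac{1}{2}\int_\Omega p_n\,\mathbb CE\overline u_n\cdot E\overline u_n\,\de x,\qquad p_n:=b(\overline v_n)-b(\overline v_n+\chi)\ge 0,
\end{equation*}
where $E\overline u_n$ is only weakly convergent. This is the main obstacle, as there is no viscoelastic damping forcing strong convergence of $E\overline u_n$. The saving observation is twofold: the derivative $\dot b$ is bounded on $(-\infty,1]$ (being non--negative, non--decreasing, continuous, with $\overline v_n+\chi\le\overline v_n\le 1$), so $|p_n|\le\dot b(1)|\chi|$ and, combining $\overline v_n\to v$ in $L^2(0,T;L^2(\Omega))$ with $b\in C^1$, dominated convergence gives $p_n\to p:=b(v)-b(v+\chi)$ strongly in $L^q(\Omega_T)$ for every finite $q$; and the integrand $(p,\xi)\mapsto p\,\mathbb C\xi\cdot\xi$ is non--negative and convex in $\xi$ when $p\ge 0$. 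Splitting $\int p_n\mathbb CE\overline u_n\cdot E\overline u_n=\int p\,\mathbb CE\overline u_n\cdot E\overline u_n+\int(p_n-p)\mathbb CE\overline u_n\cdot E\overline u_n$, the first piece is weakly lower semicontinuous in $E\overline u_n$ by convexity (because $p\ge 0$), while the second vanishes by strong--weak pairing, yielding the Ioffe--type estimate $\limsup_n\int_0^T\eta A_n\,\de t\le\int_0^T\eta\,[\mathcal E(u,v+\chi)-\mathcal E(u,v)]\,\de t$.

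Putting the three limits together, the integrated inequality survives in the limit for every $\eta\ge 0$, so varying $\eta$ and appealing to Lebesgue differentiation yields
\begin{equation*}
\mathcal E(u(t),v(t)+\chi)-\mathcal E(u(t),v(t))+\partial\mathcal H(v(t))[\chi]+\sum_{i=0}^k\alpha_i(\nabla^i\dot v(t),\nabla^i\chi)_{L^2(\Omega)}\ge 0
\end{equation*}
for a.e. $t\in(0,T)$ and every $\chi$ in a countable dense subset of $\{\chi\in H^1_{D_2}(\Omega)\cap H^k(\Omega):\chi\le 0\}$, hence by continuity for every such $\chi$. The convexity of $\mathcal H$ gives $\mathcal H(v(t)+\chi)-\mathcal H(v(t))\ge\partial\mathcal H(v(t))[\chi]$, so replacing $\partial\mathcal H(v(t))[\chi]$ by the full $\mathcal H$--difference only strengthens the inequality; writing any admissible competitor $v^*$ with $v^*-w_2\in H^1_{D_2}(\Omega)\cap H^k(\Omega)$ and $v^*\le v(t)$ as $v(t)+\chi$ with $\chi=v^*-v(t)\le 0$ finally delivers~\eqref{eq:ph_mincon}.
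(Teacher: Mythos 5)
The overall architecture of your argument matches the paper's: test the discrete inequality with $v^*=\overline v_n(t)+\chi$, integrate in time, pass to the limit, then use Lebesgue differentiation plus convexity of $\mathcal H$ to upgrade to~\eqref{eq:ph_mincon}. Multiplying by $\eta\in C_c^\infty$ instead of integrating over $[t_1,t_2]$ is a harmless variant. The treatment of the $\partial\mathcal H$ and dissipative terms is also correct.

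The genuine gap is in the elastic difference. You write
$\int p_n\mathbb CE\overline u_n\cdot E\overline u_n=\int p\,\mathbb CE\overline u_n\cdot E\overline u_n+\int(p_n-p)\mathbb CE\overline u_n\cdot E\overline u_n$
and claim the second integral vanishes ``by strong--weak pairing.'' This is not justified. The factor $\mathbb CE\overline u_n\cdot E\overline u_n$ is only \emph{bounded} in $L^1(\Omega\times(0,T))$; being quadratic in the weakly convergent $E\overline u_n$, it does not converge weakly in $L^1$ (a defect of mass can survive), so the only H\"older pairing available would require $p_n-p\to 0$ in $L^\infty$, which you do not have. Convergence of $p_n-p$ in $L^q$ for $q<\infty$ together with an $L^1$ bound on the other factor does not force the product integral to zero: without equi-integrability of $|E\overline u_n|^2$ one can make $p_n-p$ small except on a shrinking set where $|E\overline u_n|^2$ concentrates. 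This is exactly the joint strong/weak lower-semicontinuity issue that Ioffe--Olech's theorem is designed to handle (and is what the paper invokes), by putting the convexity in $\xi$ and the measurable convergence of the $y$-variable into a single lower-semicontinuity statement rather than a split. Your first piece is the $y$-frozen special case of that theorem, but the error term is not small.

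There is also a scope issue in the same step: your domination $|p_n|\le\dot b(1)|\chi|$, from which you deduce $p_n\to p$ in $L^q$ for all finite $q$, uses $\chi\in L^\infty(\Omega)$, i.e.\ $H^k(\Omega)\hookrightarrow C^0(\overline\Omega)$. But Lemma~\ref{lem:ph_lim2} belongs to Section~\ref{sec:ph3}, which must cover \emph{every} $k\in\mathbb N\cup\{0\}$ (including $k=0$, where $\chi$ is merely in $H^1(\Omega)$). The Ioffe--Olech route used in the paper avoids this restriction because it only needs $\overline v_n(t)\to v(t)$ in measure and $E\overline u_n(t)\rightharpoonup Eu(t)$ in $L^2$, both of which are available for all $k$.
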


\begin{proof}
For every $j=1,\dots,n$ the pair $(u_n^j,v_n^j)$ satisfies the inequality~\eqref{eq:ph_dvarin}, that can be rephrased in
\begin{equation}\label{eq:ph_dd1}
\mathcal E(\overline u_n(t),v^*)-\mathcal E(\overline u_n(t),\overline v_n(t))+\partial\mathcal H(\overline v_n(t))[v^*-\overline v_n(t)]+\sum_{i=0}^k\alpha_i(\nabla^i\dot v_n(t),\nabla^iv^*-\nabla^i\overline v_n(t))_{L^2(\Omega)}\ge 0
\end{equation}
for a.e. $t\in(0,T)$ and for every $v^*-w_2\in H^1_{D_2}(\Omega)\cap H^k(\Omega)$ with $v^*\le \underline v_n(t)$. Given $\chi\in H^1_{D_2}(\Omega)\cap H^k(\Omega)$ with $\chi\le 0$, the function $\chi+\overline v_n(t)$ is admissible for~\eqref{eq:ph_dd1}. After an integration in $[t_1,t_2]\subseteq[0,T]$, we deduce the following inequality
\begin{equation}
\begin{aligned}\label{eq:ph_dd2}
&\int_{t_1}^{t_2}[\mathcal E(\overline u_n(t),\chi+\overline v_n(t))-\mathcal E(\overline u_n(t),\overline v_n(t))]\de t\\
&\quad+\int_{t_1}^{t_2}\partial\mathcal H(\overline v_n(t))[\chi]\de t+\sum_{i=0}^k\alpha_i\int_{t_1}^{t_2}(\nabla^i\dot
v_n(t),\nabla^i\chi)_{L^2(\Omega)}\de t\ge 0.
\end{aligned}
\end{equation}
Let us send $n\to\infty$. We have
\begin{equation}
\lim_{n\to\infty}\sum_{i=0}^k\alpha_i\int_{t_1}^{t_2}(\nabla^i\dot v_n(t),\nabla^i\chi)_{L^2(\Omega)}\de t=\sum_{i=0}^k\alpha_i\int_{t_1}^{t_2}(\nabla^i\dot v(t),\nabla^i\chi)_{L^2(\Omega)}\de t,
\end{equation}
since $\dot v_n\rightharpoonup \dot v$ in $L^2(0,T;H^k(\Omega))$. Moreover $\overline v_n\rightharpoonup v$ in $L^2(0,T;H^1(\Omega))$, which implies
\begin{equation}
\lim_{n\to\infty}\int_{t_1}^{t_2}\partial\mathcal H(\overline v_n(t))[\chi]\de t=\int_{t_1}^{t_2}\partial\mathcal H(v(t))[\chi]\de t.
\end{equation}
The function $\phi(x,y,\xi):=\frac{1}{2}[b(y)-b(\chi(x)+y)]\mathbb C(x) \xi^{sym}\cdot \xi^{sym}$, $(x,y,\xi)\in\Omega\times\mathbb R\times\mathbb R^{d\times d}$, satisfies the assumptions of Ioffe--Olech's theorem (see, e.g.,~\cite[Theorem 3.4]{D}). Thus, for every $t\in[0,T]$ we derive
\begin{align*}
\mathcal E(u(t),v(t))-\mathcal E(u(t),\chi+v(t))&=\int_\Omega \phi(x,v(t,x),Eu(t,x))\de x\\
&\le\liminf_{n\to\infty}\int_\Omega \phi(x,\overline v_n(t,x),E\overline u_n(t,x))\de x\\
&=\liminf_{n\to\infty}[\mathcal E(\overline u_n(t),\chi+\overline v_n(t))-\mathcal E(\overline u_n(t),\overline v_n(t))],
\end{align*}
since $\overline v_n(t)\to v(t)$ in $L^2(\Omega)$ and $E\overline u_n(t)\rightharpoonup Eu(t)$ in $L^2(\Omega;\mathbb R^{d\times d})$ for every $t\in[0,T]$. By Fatou's lemma, we conclude
\begin{align*}
\int_{t_1}^{t_2}[\mathcal E(u(t),v(t))-\mathcal E(u(t),\chi+v(t))]\de t&\le\int_{t_1}^{t_2}\liminf_{n\to\infty}[\mathcal E(\overline u_n(t),\overline v_n(t))-\mathcal E(\overline u_n(t),\chi+\overline v_n(t))]\de t\\
&\le\liminf_{n\to\infty}\int_{t_1}^{t_2}[\mathcal E(\overline u_n(t),\overline v_n(t))-\mathcal E(\overline u_n(t),\chi+\overline v_n(t))]\de t,
\end{align*}
which gives
\begin{equation}\label{eq:ph_ee3}
\int_{t_1}^{t_2}[\mathcal E(u(t),\chi+v(t))-\mathcal E(u(t),v(t))]\de t\ge \limsup_{n\to\infty}\int_{t_1}^{t_2}[\mathcal E(\overline u_n(t),\chi+\overline v_n(t))-\mathcal E(\overline u_n(t),\overline v_n(t))]\de t.
\end{equation}
By combining~\eqref{eq:ph_dd2}--\eqref{eq:ph_ee3} we obtain the following inequality
\begin{equation*}
\int_{t_1}^{t_2}[\mathcal E(u(t),\chi+v(t))-\mathcal E(u(t),v(t))]\de t+\int_{t_1}^{t_2}\partial\mathcal H(v(t))[\chi]\de t+\int_{t_1}^{t_2}\sum_{i=0}^k\alpha_i(\nabla^i\dot v(t),\nabla^i \chi)_{L^2(\Omega)}\de t\ge 0.
\end{equation*}
We choose now a countable dense set $\mathscr D\subset\{\chi\in H^1_{D_2}(\Omega)\cap H^k(\Omega):\chi\le 0\}$. Thanks to Lebesgue's differentiation theorem for a.e. $t\in(0,T)$ we derive
\begin{equation}\label{eq:ph_ee4}
\mathcal E(u(t),\chi+v(t))-\mathcal E(u(t),v(t))+\partial\mathcal H(v(t))[\chi]+\sum_{i=0}^k\alpha_i(\nabla^i\dot v(t),\nabla^i \chi)_{L^2(\Omega)}\ge 0\quad\text{for every }\chi\in\mathscr D.
\end{equation}
Finally, we use a density argument and the dominated convergence theorem to deduce that~\eqref{eq:ph_ee4} is satisfied for every $\chi\in H^1_{D_2}(\Omega)\cap H^k(\Omega)$ with $\chi\le 0$. In particular, for a.e. $t\in(0,T)$ we get
\begin{equation*}
\mathcal E(u(t),v^*)-\mathcal E(u(t),v(t))+\partial\mathcal H(v(t))[v^*-v(t)]+\sum_{i=0}^k\alpha_i(\nabla^i \dot v(t),\nabla^i v^*-\nabla^i v(t))_{L^2(\Omega)}\ge 0,
\end{equation*}
for every $v^*-w_2\in H^1_{D_2}(\Omega)\cap H^k(\Omega)$ with $v^*\le v(t)$, by taking $\chi:=v^*-v(t)$. This implies the crack stability condition~\eqref{eq:ph_mincon}, since the map $v^*\mapsto \mathcal H(v^*)$ is convex.
\end{proof}

We conclude this section by showing that the pair $(u,v)$ of Lemma~\ref{lem:ph_con} satisfies an energy--dissipation inequality. Notice that the total work $\mathcal W_{tot}(u,v;t_1,t_2)$ is well defined also for a generalized solution. Indeed, we have $u\in C_w^0([0,T];H^1(\Omega;\mathbb R^d))$ and $\dot u\in C_w^0([0,T];H^1(\Omega;\mathbb R^d))$, which gives that $u(t)-w_1(t)$ and $\dot u(t)$ are uniquely defined for every $t\in[0,T]$ as elements of $H^1_{D_1}(\Omega;\mathbb R^d)$ and $L^2(\Omega;\mathbb R^d)$, respectively. Moreover, by combining the weak continuity of $u$ and $\dot u$, with the strong continuity of $g$, $w_1$, and $\dot w_1$, it is easy to see that the function $(t_1,t_2)\to\mathcal W_{tot}(t_1,t_2,u,v)$ is continuous.

\begin{lemma}\label{lem:ph_enin}
The pair $(u,v)$ of Lemma~\ref{lem:ph_con} satisfies for every $t\in[0,T]$ the energy--dissipation inequality 
\begin{equation}\label{eq:ph_enin}
\mathcal F(u(t),\dot u(t), v(t))+\int_0^t\mathcal G_k(\dot v(s))\de s\le\mathcal F(u^0, u^1, v^0)+\mathcal W_{tot}(u,v;0,t).
\end{equation}
\end{lemma}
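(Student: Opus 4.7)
\medskip

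\noindent\textbf{Proof plan.} The strategy is to pass to the limit in the discrete energy inequality~\eqref{eq:ph_denin} of Lemma~\ref{lem:ph_denin}, which already has the structure of~\eqref{eq:ph_enin} at the discrete level. Fix $t\in[0,T]$ and, for every $n\in\mathbb N$, choose $j_n\in\{1,\dots,n\}$ so that $t_n:=j_n\tau_n\ge t$ and $t_n\to t$ (take $j_n:=\lceil t/\tau_n\rceil$, or $j_n=n$ when $t=T$). Evaluate~\eqref{eq:ph_denin} at $j=j_n$ and rewrite the sums as integrals of the interpolants: the left--hand side equals
\begin{equation*}
\mathcal K(\overline u_n'(t_n))+\mathcal E(\overline u_n(t_n),\overline v_n(t_n))+\mathcal H(\overline v_n(t_n))+\int_0^{t_n}\mathcal G_k(\dot v_n(s))\de s+\sum_{l=1}^{j_n}\tau_n^2 D_n^l.
\end{equation*}

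\noindent\textbf{Left--hand side.} First I would establish the \emph{pointwise at shifted time} convergences: combining $u_n\to u$ in $C^0([0,T];L^2)$ and $v_n\to v$ in $C^0([0,T];L^2)$ (Lemma~\ref{lem:ph_con}) with the uniform $H^1$ bounds of~\eqref{eq:ph_est}, I obtain $\overline u_n(t_n)\rightharpoonup u(t)$ in $H^1(\Omega;\R^d)$ and $\overline v_n(t_n)\rightharpoonup v(t)$ in $H^1(\Omega)$ with strong convergence in $L^2$; likewise $u_n'\to\dot u$ in $C^0([0,T];H^{-1}_{D_1})$ together with the uniform $L^2$ bound yields $\overline u_n'(t_n)\rightharpoonup\dot u(t)$ in $L^2(\Omega;\R^d)$. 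Then each energy contribution passes to the limit via lower semicontinuity: weak lower semicontinuity of the $L^2$--norm handles $\mathcal K$; the Ioffe--Olech argument already used in Lemma~\ref{lem:ph_lim2} (applied pointwise at $t$) handles $\mathcal E$; and the convexity of $\mathcal H$ handles the surface term. For the dissipation I use $\mathcal G_k\ge 0$ and $t_n\ge t$ to write $\int_0^{t_n}\mathcal G_k(\dot v_n)\de s\ge\int_0^t\mathcal G_k(\dot v_n)\de s$, then invoke weak lower semicontinuity of the quadratic form $\int_0^t\mathcal G_k(\cdot)\de s$ on $L^2(0,T;H^k(\Omega))$ together with $\dot v_n\rightharpoonup\dot v$. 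The remaining dissipation sum $\sum\tau_n^2 D_n^l\ge 0$ is simply dropped.

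\noindent\textbf{Right--hand side.} After rewriting the finite sums in~\eqref{eq:ph_denin} as integrals of the interpolants $\overline f_n$, $\overline g_n$, $\overline u_n'$, $\underline u_n$, $\overline v_n$, and of the standard piecewise interpolants of $w_1$, $\dot w_1$, $\ddot w_1$, $g$, $\dot g$ on $[0,t_n]$, I pass to the limit by pairing the strong convergences of the interpolated data (which follow from~\eqref{eq:ph_bd1} and~\eqref{eq:ph_bd2} by the standard density argument) with the weak convergences of $\overline u_n,\underline u_n,\overline u_n'$ from Lemma~\ref{lem:ph_con}. For the term involving $b(v_n^{l-1})\mathbb C Eu_n^l$, the argument from Lemma~\ref{lem:ph_lim1} (dominated convergence combined with $\underline v_n\to v$ in $L^2$) gives $b(\underline v_n)\mathbb C E\overline u_n\rightharpoonup b(v)\mathbb C Eu$ in $L^2(0,T;L^2)$ and hence yields the desired integral. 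The boundary contributions at the endpoint $t_n$, namely $(\delta u_n^{j_n},\delta w_n^{j_n})_{L^2}$ and $\spr{g_n^{j_n}}{u_n^{j_n}-w_n^{j_n}}_{H^{-1}_{D_1}}$, pass to the limit using the same pointwise weak convergences at $t_n\to t$ together with the strong continuity of $t\mapsto\dot w_1(t)$, $t\mapsto w_1(t)$, $t\mapsto g(t)$.

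\noindent\textbf{Main obstacle.} The delicate point is not any single convergence but the bookkeeping needed to transfer the pointwise weak convergence of Remark~\ref{rem:ph_pointcon}, which is stated at a \emph{fixed} $t$, to the \emph{moving} endpoint $t_n\to t$; this is what forces the use of the $C^0$ convergences in $L^2$ resp.\ $H^{-1}_{D_1}$ (to identify the limit) combined with the uniform bounds of~\eqref{eq:ph_est} (to promote to weak convergence in the stronger topology). Once this is in place, the inequality~\eqref{eq:ph_enin} follows by taking $\liminf_{n\to\infty}$ on the left--hand side and $\lim_{n\to\infty}$ on the right--hand side of~\eqref{eq:ph_denin} and matching the resulting terms with the definition of $\mathcal W_{tot}(u,v;0,t)$.
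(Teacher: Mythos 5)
Your proposal is correct and follows essentially the same route as the paper's proof: write the discrete energy inequality~\eqref{eq:ph_denin} in terms of the interpolants at the step containing $t$, drop the nonnegative dissipations $\tau_n^2D_n^l$, take $\liminf$ on the left--hand side (weak lower semicontinuity for $\mathcal K$, $\mathcal H$, $\int_0^t\mathcal G_k(\cdot)\de s$, Ioffe--Olech for $\mathcal E$), and pass to the limit on the right--hand side by pairing strong convergences of the data interpolants with the weak convergences of Lemma~\ref{lem:ph_con}. One small simplification compared with your ``moving endpoint'' worry: since $t\in((j_n-1)\tau_n,j_n\tau_n]$, the backward interpolants are constant on that interval, so $\overline u_n(t_n)=\overline u_n(t)$, $\overline u_n'(t_n)=\overline u_n'(t)$, $\overline v_n(t_n)=\overline v_n(t)$, and Remark~\ref{rem:ph_pointcon} applies directly at the fixed~$t$ without any transfer argument.
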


\begin{proof}
Let $g_n$, $w_n$, and $w'_n$ be the piecewise affine interpolants of $\{g_n^j\}_{j=1}^n$, $\{w_n^j\}_{j=1}^n$, and $\{\delta w_n^j\}_{j=1}^n$, respectively, and let $\overline w_n,\overline w_n'$ and $\underline w_n,\underline w_n'$ be the backward and the forward interpolants of $\{w_n^j\}_{j=1}^n$ and $\{\delta w_n^j\}_{j=1}^n$, respectively. 

For $t=0$ the inequality~\eqref{eq:ph_enin} trivially holds thanks to our initial conditions~\eqref{eq:ph_ic}. We fix $t\in (0,T]$ and for every $n\in\mathbb N$ we consider the unique $j\in\{1,\dots,n\}$ such that $t\in((j-1)\tau_n,j\tau_n]$. As done before, we use the previous interpolants and~\eqref{eq:ph_denin} to write
\begin{equation}
\begin{aligned}\label{eq:ph_hh0}
&\mathcal F(\overline u_n(t),\overline u_n'(t),\overline v_n(t))+\int_0^{t_n} \mathcal G_k(\dot v_n(s))\de s\\
&\le \mathcal F(u^0, u^1, v^0)+\int_0^{t_n}(\overline f_n(s),\overline u_n'(s)-\overline w_n'(s))_{L^2(\Omega)}\de s+\int_0^{t_n}(b(\underline v_n(s))\mathbb CE\overline u_n(s),E\overline w_n'(s))_{L^2(\Omega)}\de s\\
&\quad+\spr{\overline g_n(t)}{\overline u_n(t)-\overline w_n(t)}_{H^{-1}_{D_1}(\Omega)}-\spr{g(0)}{u^0-w_1(0)}_{H^{-1}_{D_1}(\Omega)}-\int_0^{t_n}\spr{\dot g_n(s)}{\underline u_n(s)-\underline w_n(s)}_{H^{-1}_{D_1}(\Omega)}\de s\\
&\quad+(\overline u_n'(t),\overline w_n'(t))_{L^2(\Omega)}-(u^1,w_1(0))_{L^2(\Omega)}-\int_0^{t_n}(\underline u_n'(s),\dot w_n'(s))_{L^2(\Omega)}\de s,
\end{aligned}
\end{equation}
where we have set $t_n:=j\tau_n$, and we have neglected the terms $D_n^j$, which are non negative. It easy to see that the following convergences hold as $n\to\infty$:
\begin{align*}
&\overline f_n\to f\quad\text{in }L^2(0,T;L^2(\Omega;\mathbb R^d)),& & \dot g_n\to \dot g\quad\text{in }L^2(0,T;H^{-1}_{D_1}(\Omega;\mathbb R^d)),\\
&\underline w_n\to w_1\quad\text{in }L^2(0,T;H^1(\Omega;\mathbb R^d)),& & \overline w_n'\to \dot w_1\quad\text{in }L^2(0,T;H^1(\Omega;\mathbb R^d)),\\
&\dot w_n'\to \ddot w_1\quad\text{in }H^1(0,T;L^2(\Omega;\mathbb R^d)).
\end{align*}
By using also the ones of Lemma~\ref{lem:ph_con} and observing that $t_n\to t$ as $n\to\infty$, we deduce
\begin{align}\label{eq:ph_hh1}
&\lim_{n\to\infty}\int_0^{t_n}(\overline f_n(s),\overline u_n'(s)-\overline w_n'(s))_{L^2(\Omega)}\de s=\int_0^t(f(s),\dot u(s)-\dot w_1(s))_{L^2(\Omega)}\de s,\\
&\lim_{n\to\infty}\int_0^{t_n}\spr{\dot g_n(s)}{\underline u_n(s)-\underline w_n(s)}_{H^{-1}_{D_1}(\Omega)}\de s=\int_0^t\spr{\dot g(s)}{u(s)-w_1(s)}_{H^{-1}_{D_1}(\Omega)}\de s,\\
&\lim_{n\to\infty}\int_0^{t_n}(\underline u_n'(s),\dot w_n'(s))_{L^2(\Omega)}\de s=\int_0^t(\dot u(s),\ddot w_1(s))_{L^2(\Omega)}\de s.
\end{align}
Moreover, the strong continuity of $g$, $w_1$, and $\dot w_1$ in $H^{-1}_{D_1}(\Omega;\mathbb R^d)$, $H^1(\Omega;\mathbb R^d)$, and $L^2(\Omega;\mathbb R^d)$, respectively, and the convergences of Remark~\ref{rem:ph_pointcon}, imply 
\begin{align}
&\lim_{n\to\infty}\spr{\overline g_n(t)}{\overline u_n(t)-\overline w_n(t)}_{H^{-1}_{D_1}(\Omega)}=\spr{g(t)}{u(t)-w_1(t)}_{H^{-1}_{D_1}(\Omega)},\\
&\lim_{n\to\infty}(\overline u_n'(t),\overline w_n'(t))_{L^2(\Omega)}=(\dot u(t),\dot w_1(t))_{L^2(\Omega)}.
\end{align}
It is easy to check that $b(\underline v_n)\mathbb C E\overline w_n'\to b(v)\mathbb C E\dot w_1$ in $L^2(0,T;L^2(\Omega;\mathbb R^{d\times d}))$, thanks to the dominated convergence theorem. By combining it with $E\overline u_n\rightharpoonup Eu$ in $L^2(0,T;L^2(\Omega;\mathbb R^{d\times d}))$, we conclude
\begin{align}
\lim_{n\to\infty}\int_0^{t_n}(b(\underline v_n(s))\mathbb CE\overline u_n(s),E\overline w_n'(s))_{L^2(\Omega)}\de s=\int_0^t(b(v(s))\mathbb CEu(s),E\dot w_1(s))_{L^2(\Omega)}\de s.
\end{align}

If we now consider the left--hand side of~\eqref{eq:ph_hh0}, we get
\begin{align}
&\mathcal K(\dot u(t))\le\liminf_{n\to\infty}\mathcal K(\overline u_n'(t)),\quad\mathcal H(v(t))\le\liminf_{n\to\infty}\mathcal H(\overline v_n(t)),
\end{align}
since $\overline u_n'(t)\rightharpoonup \dot u(t)$ in $L^2(\Omega,\mathbb R^d)$ and $\overline v_n(t)\rightharpoonup v(t)$ in $H^1(\Omega)$. Furthermore, we have $\dot v_n\rightharpoonup \dot v$ in $L^2(0,T;H^k(\Omega))$ and $t\le t_n$, which gives
\begin{equation}
\int_0^t \mathcal G_k(\dot v(s))\de s\le \liminf_{n\to\infty}\int_0^t \mathcal G_k(\dot v_n(s))\de s\le \liminf_{n\to\infty}\int_0^{t_n} \mathcal G_k(\dot v_n(s))\de s.
\end{equation}
Finally, let us consider the function $\phi(x,y,\xi):=\frac{1}{2}b(y)\mathbb C(x) \xi^{sym}\cdot \xi^{sym}$, $(x,y,\xi)\in\Omega\times\mathbb R\times\mathbb R^{d\times d}$. As in the previous lemma, the function $\phi$ satisfies the assumption of Ioffe--Olech's theorem, while $\overline v_n(t)\to v(t)$ in $L^2(\Omega)$, and $E\overline u_n(t)\rightharpoonup Eu(t)$ in $L^2(\Omega;\mathbb R^{d\times d})$. Thus, we obtain
\begin{equation}\label{eq:ph_hh2}
\begin{aligned}
\mathcal E(u(t),v(t))&=\int_\Omega \phi(x,v(t,x),Eu(t,x))\de x\\
&\le\liminf_{n\to\infty}\int_\Omega \phi(x,\overline v_n(t,x),E\overline u_n(t,x))\de x=\liminf_{n\to\infty}\mathcal E(\overline u_n(t),\overline v_n(t)).
\end{aligned}
\end{equation}
By combining~\eqref{eq:ph_hh0} with~\eqref{eq:ph_hh1}--\eqref{eq:ph_hh2} we deduce the inequality~\eqref{eq:ph_enin} for every $t\in(0,T]$.
\end{proof}

\begin{remark}
Thanks to Lemmas~\ref{lem:ph_lim1}--\ref{lem:ph_enin} for every $k\in\mathbb N\cup\{0\}$ we deduce the existence of a phase--field evolution $(u,v)$ which is a generalized solution to the elastodynamic system~\eqref{eq:ph_elsys} with boundary and initial conditions~\eqref{eq:ph_bc1}--\eqref{eq:ph_ic} and satisfies the irreversibility condition~\eqref{eq:ph_irr}, the minimality condition~\eqref{eq:ph_mincon}, and the energy--dissipation inequality \eqref{eq:ph_enin}. Moreover, for $k=0$, $w_2\ge 0$ on $\partial_{D_2}\Omega$, $v^0\ge 0$ in $\Omega$, and $b(s)=(\max\{s,0\})^2+\eta$ for $s\in\R$, we can construct $(u,v)$ in such a way that $v(t)\ge 0$ in $\Omega$ for every $t\in[0,T]$. Unfortunately, when $k\ge 1$ the argument used in Lemma~\ref{lem:ph_dvarin} to prove the non negativity of the $v_n^j$ fails, therefore we do not have $v(t)\ge 0$ in $\Omega$ for every $t\in[0,T]$.
\end{remark}


\section{Proof of the main result}\label{sec:ph4}
In this section we show that for $k>d/2$ the generalized solution $(u,v)$ of Lemma~\ref{lem:ph_con} is a weak solution and satisfies the identity~\eqref{eq:ph_stcon}. To this aim we need several lemmas: we start by proving that, given a function $v\in H^1(0,T;C^0(\overline\Omega))$ satisfying~\eqref{eq:ph_irr}, there exists a unique solution $u$ to the equation~\eqref{eq:ph_weak_form}. As a consequence, we deduce the energy--dissipation balance~\eqref{eq:ph_j1} for {\it every} $t\in[0,T]$, which guarantees that the function $u$ is more regular in time, namely $u\in C^0([0,T];H^1(\Omega;\mathbb R^d))\cap C^1([0,T];L^2(\Omega;\mathbb R^d))$. Finally, we use the crack stability condition~\eqref{eq:ph_mincon} and the energy--dissipation inequality~\eqref{eq:ph_enin} to obtain~\eqref{eq:ph_Genb} from~\eqref{eq:ph_j1}.

\begin{lemma}\label{lem:ph_exis}
Let $w_1$, $f$, $g$, $u^0$, and $u^1$ be as in~\eqref{eq:ph_bd1}--\eqref{eq:ph_id}. Let $\sigma\in H^1(0,T;C^0(\overline\Omega))$ be a function satisfying~\eqref{eq:ph_irr}. Then there exists a unique function $z$ which satisfies~\eqref{eq:ph_greg1},~\eqref{eq:ph_greg2}, the initial conditions $z(0)=u^0$ and $\dot z(0)=u^1$, and which solves for a.e. $t\in(0,T)$ the following equation:
\begin{equation}\label{eq:ph_weak2}
\spr{\ddot z(t)}{\psi}_{H^{-1}_{D_1}(\Omega)}+(b(\sigma(t))\mathbb C Ez(t),E\psi)_{L^2(\Omega)}=(f(t),\psi)_{L^2(\Omega)}+\spr{g(t)}{\psi}_{H^{-1}_{D_1}(\Omega)}
\end{equation}
for every $\psi\in H^1_{D_1}(\Omega;\mathbb  R^d)$.
\end{lemma}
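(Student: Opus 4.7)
My plan is to combine a Faedo--Galerkin approximation for existence with a Ladyzhenskaya-type test function for uniqueness.

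To handle the nonhomogeneous Dirichlet datum I will set $y:=z-w_1$, so the problem reduces to finding $y(t)\in H^1_{D_1}(\Omega;\R^d)$ with $y(0)=u^0-w_1(0)\in H^1_{D_1}(\Omega;\R^d)$ and $\dot y(0)=u^1-\dot w_1(0)\in L^2(\Omega;\R^d)$ (the value $\dot w_1(0)$ makes sense since $\dot w_1\in H^1(0,T;L^2(\Omega;\R^d))\hookrightarrow C^0([0,T];L^2(\Omega;\R^d))$), solving
$$\spr{\ddot y(t)}{\psi}_{H^{-1}_{D_1}(\Omega)}+a(t;y(t),\psi)=F(t,\psi)\quad\text{for every }\psi\in H^1_{D_1}(\Omega;\R^d),$$
where $a(t;u,v):=(b(\sigma(t))\mathbb C Eu,Ev)_{L^2(\Omega)}$ and $F(t,\cdot)$ collects $f$, $g$, $-\ddot w_1$, and $-b(\sigma)\mathbb C Ew_1$. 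I would then pick a countable set $\{\psi_i\}\subset H^1_{D_1}(\Omega;\R^d)$ whose finite linear combinations are dense, set $V_n:=\spn\{\psi_1,\dots,\psi_n\}$, and solve the Galerkin ODE system for $y_n(t)=\sum_{i=1}^n c_i^n(t)\psi_i$ with initial data the projections onto $V_n$ of $u^0-w_1(0)$ and $u^1-\dot w_1(0)$. The embedding $H^1(0,T;C^0(\overline\Omega))\hookrightarrow C^0([0,T];C^0(\overline\Omega))$ together with continuity of $b$ gives $b(\sigma)\in C^0([0,T];C^0(\overline\Omega))$, so the ODE has $L^\infty$-in-time coefficients and Carath\'eodory theory produces a unique $y_n$ on $[0,T]$.

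For the a priori estimate I will test the projected equation with $\dot y_n(t)\in V_n$ and apply the chain rule
$$\frac{\de}{\de t}a(t;y_n,y_n)=2a(t;y_n,\dot y_n)+\dot a(t;y_n,y_n),\qquad\dot a(t;u,v):=(\dot b(\sigma(t))\dot\sigma(t)\mathbb C Eu,Ev)_{L^2(\Omega)}.$$
From $\dot b\ge 0$ (by~\eqref{eq:ph_breg1}) and $\dot\sigma\le 0$ (by the monotonicity~\eqref{eq:ph_irr}) one has $\dot a(t;y_n,y_n)\le 0$, so combining with the coercivity~\eqref{eq:ph_ell}, the lower bound~\eqref{eq:ph_breg2}, and routine estimates for the source, Gronwall's lemma yields uniform bounds for $\{y_n\}$ in $L^\infty(0,T;H^1_{D_1}(\Omega;\R^d))\cap W^{1,\infty}(0,T;L^2(\Omega;\R^d))$. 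The Galerkin equation itself then bounds $\{\ddot y_n\}$ in $L^2(0,T;H^{-1}_{D_1}(\Omega;\R^d))$. Extracting a weakly-$*$ converging subsequence and passing to the limit term-by-term (since $b(\sigma)$ is a fixed coefficient, linearity of the equation in $y_n$ does the job) gives the desired solution $z:=y+w_1$ satisfying~\eqref{eq:ph_greg1}, \eqref{eq:ph_greg2}, and~\eqref{eq:ph_weak2}. The initial conditions follow from $y_n(0)\to u^0-w_1(0)$ in $H^1_{D_1}(\Omega;\R^d)$, $\dot y_n(0)\to u^1-\dot w_1(0)$ in $L^2(\Omega;\R^d)$, and the weak continuity guaranteed by Remark~\ref{rem:ph_weak_con}.

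The main obstacle will be uniqueness, since $\dot z$ lies only in $L^\infty(0,T;L^2(\Omega;\R^d))$ and is not admissible as a test function in~\eqref{eq:ph_weak2}. Given two solutions $z_1,z_2$, the difference $u:=z_1-z_2$ satisfies the homogeneous version of~\eqref{eq:ph_weak2} with vanishing initial data, and for each fixed $s\in(0,T]$ I would use the Ladyzhenskaya-type test function
$$\phi(t):=\begin{cases}\int_t^s u(\tau)\,\de\tau & 0\le t\le s,\\ 0 & s<t\le T,\end{cases}$$
which belongs to $C^0([0,T];H^1_{D_1}(\Omega;\R^d))$ with $\phi(s)=0$ and $\dot\phi(t)=-u(t)$ on $(0,s)$. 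Testing the homogeneous equation with $\phi(t)$, integrating on $[0,s]$, and performing the integrations by parts permitted by $u\in W^{1,\infty}(0,T;L^2(\Omega;\R^d))$ and $\phi\in C^0([0,T];H^1_{D_1}(\Omega;\R^d))$, using $\dot u(0)=0$, $\phi(s)=0$, and the chain rule for $a(t;\phi,\phi)$, I obtain the identity
$$\frac{1}{2}\norm{u(s)}_{L^2(\Omega)}^2+\frac{1}{2}a(0;\phi(0),\phi(0))=-\frac{1}{2}\int_0^s\dot a(t;\phi(t),\phi(t))\,\de t.$$
Writing $w(t):=\int_0^t u(\tau)\,\de\tau$ so that $\phi(t)=w(s)-w(t)$ and $\phi(0)=w(s)$, I bound the right-hand side by $C\int_0^s\norm{\dot\sigma(t)}_{L^\infty(\Omega)}(\norm{Ew(s)}_{L^2(\Omega)}^2+\norm{Ew(t)}_{L^2(\Omega)}^2)\,\de t$, absorb the $\norm{Ew(s)}_{L^2(\Omega)}^2$ contribution into the coercive left-hand side on a short interval $[0,s_0]$ where $C\norm{\dot\sigma}_{L^1(0,s_0;L^\infty(\Omega))}$ is small, and apply Gronwall's inequality to $E(s):=\norm{u(s)}_{L^2(\Omega)}^2+\norm{w(s)}_{H^1(\Omega)}^2$ to conclude $E\equiv 0$ on $[0,s_0]$. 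A finite iteration then covers $[0,T]$ and yields uniqueness.
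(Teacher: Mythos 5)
Your plan is correct, but your route to existence is genuinely different from the paper's, while your uniqueness argument is essentially the one the paper uses. The paper constructs $z$ by an implicit Euler time discretization (defining $z_n^j$ via~\eqref{eq:ph_zn}, deriving the discrete energy inequality~\eqref{eq:ph_denin2}, interpolating, and passing to the limit as in Lemma~\ref{lem:ph_con}); you propose a Faedo--Galerkin scheme after subtracting off the Dirichlet lift $w_1$. Both methods hinge on the same sign observation, in two guises: for you, $\dot a(t;y_n,y_n)=(\dot b(\sigma(t))\dot\sigma(t)\mathbb{C}Ey_n,Ey_n)_{L^2(\Omega)}\le 0$ because $\dot b\ge 0$ and $\dot\sigma\le 0$; in the paper, the discrete analogue is that $-\frac{1}{2}([b(\sigma_n^j)-b(\sigma_n^{j-1})]\mathbb{C}Ez_n^j,Ez_n^j)_{L^2(\Omega)}\ge 0$ because $\sigma_n^j\le\sigma_n^{j-1}$ and $b$ is non-decreasing. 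The time-discretization route has the structural advantage that the same approximating sequence is later reused (Corollary~\ref{coro:ph_enin3}) to extract an energy inequality via lower semicontinuity, keeping the whole section within one scheme; Galerkin yields the same regularity conclusions more directly but would need a separate argument to recover that corollary. For uniqueness, your Ladyzhenskaya test function $\phi(t)=\int_t^s u(\tau)\,\de\tau$ and the resulting identity (your version is the paper's multiplied by $\tfrac12$) together with the short-time absorption and iteration are exactly what the paper does.

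One place where you should slow down: testing the Galerkin equation with $\dot y_n$ produces, from your source term, the contribution $-\int_0^t(b(\sigma(s))\mathbb{C}Ew_1(s),E\dot y_n(s))_{L^2(\Omega)}\,\de s$, which cannot be estimated directly since $E\dot y_n$ is not controlled by the energy. You need a further integration by parts in time, moving the derivative onto $b(\sigma)\mathbb{C}Ew_1$ and producing a boundary term $(b(\sigma(t))\mathbb{C}Ew_1(t),Ey_n(t))_{L^2(\Omega)}$ to be absorbed by coercivity; this is justified because $w_1\in H^1(0,T;H^1(\Omega;\R^d))$ and $\sigma\in H^1(0,T;C^0(\overline\Omega))$ make $t\mapsto b(\sigma(t))\mathbb{C}Ew_1(t)$ a $W^{1,1}(0,T;L^2(\Omega;\R^{d\times d}_{sym}))$ map. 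This is a standard manipulation (it is the reason the total work $\mathcal W_{tot}$ carries $(b(v)\mathbb{C}Eu,E\dot w_1)_{L^2(\Omega)}$ rather than its time-transposed version), but it is not ``routine'' enough to go unmentioned.
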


\begin{proof}
To prove the existence of a solution $z$ to~\eqref{eq:ph_weak2}, we proceed as before. We fix $n\in\mathbb N$ and we define 
$$\tau_n:=\frac{T}{n},\quad z^0_n:=u^0,\quad z^{-1}_n:=u^0-\tau_nu^1,\quad\sigma_n^j:=\sigma(j\tau_n)\quad\text{for }j=0,\dots,n.$$ 
For $j=1,\dots,n$ we consider the unique solution $z_n^j-w_n^j\in H^1_{D_1}(\Omega;\mathbb R^d)$ to 
\begin{equation}\label{eq:ph_zn}
(\delta^2z_n^j,\psi)_{L^2(\Omega)}+(b(\sigma_n^{j-1})\mathbb C Ez_n^j,E\psi)_{L^2(\Omega)}=(f_n^j,\psi)_{L^2(\Omega)}+\spr{g_n^j}{\psi}_{H^{-1}_{D_1}(\Omega)}
\end{equation}
for every $\psi\in H^1_{D_1}(\Omega;\mathbb R^d)$, where $\delta z_n^j:=\frac{1}{\tau_n}[z_n^j-z_n^{j-1}]$ for $j=0,\dots,n$, and $\delta^2 z_n^j:=\frac{1}{\tau_n}[\delta z_n^j-\delta z_n^{j-1}]$ for $j=1,\dots,n$. By using $\psi=\tau_n[\delta z_n^j-\delta w_n^j]$ as test function in~\eqref{eq:ph_zn} and proceeding as in Lemma~\ref{lem:ph_denin}, we get that the function $z_n^j$ satisfies for $j=1,\dots,n$ 
\begin{align*}
&[\mathcal K(\delta z_n^j)+\mathcal E(z_n^j, \sigma_n^j)]-[\mathcal K(\delta z_n^{j-1})+\mathcal E(z_n^{j-1},\sigma_n^{j-1})]-\frac{1}{2}([b(\sigma_n^j)-b(\sigma_n^{j-1})]\mathbb C Ez_n^j,Ez_n^j)_{L^2(\Omega)}\\
&\le \tau_n(f_n^j,\delta z_n^j-\delta w_n^j)_{L^2(\Omega)}+\tau_n\spr{g_n^j}{\delta z_n^j-\delta w_n^j}_{H^{-1}_{D_1}(\Omega)}+\tau_n(\delta^2 z_n^j,\delta w_n^j)_{L^2(\Omega)}+\tau_n(b(v_n^{j-1})\mathbb CEz_n^j,E\delta w_n^j)_{L^2(\Omega)}.
\end{align*}
In particular, we can sum over $l=1,\dots, j$ for every $j\in\{1,\dots,n\}$ and use the identities~\eqref{eq:ph_gn} and~\eqref{eq:ph_wn} to derive the discrete energy inequality
\begin{equation}
\begin{aligned}\label{eq:ph_denin2}
&\mathcal K(\delta z^j_n)+\mathcal E(z^j_n, \sigma^j_n)-\frac{1}{2}\sum_{l=1}^j([b(\sigma_n^l)-b(\sigma_n^{l-1})]\mathbb C Ez_n^l,Ez_n^l)_{L^2(\Omega)}\\
&\le \mathcal K(u^1)+\mathcal E(u^0,\sigma(0))+\sum_{l=1}^j \tau_n(f_n^l,\delta z_n^l-\delta w_n^l)_{L^2(\Omega)}+\sum_{l=1}^j \tau_n(b(\sigma_n^{l-1})\mathbb CEz_n^l,E\delta w_n^l)_{L^2(\Omega)}\\
&\quad+\spr{g^j_n}{z^j_n-w^j_n}_{H^{-1}_{D_1}(\Omega)}-\spr{g(0)}{u^0-w_1(0)}_{H^{-1}_{D_1}(\Omega)}-\sum_{l=1}^j\tau_n\spr{\delta g_n^l}{z_n^{l-1}-w_n^{l-1}}_{H^{-1}_{D_1}(\Omega)}\\
&\quad+(\delta z^j_n,\delta w^j_n)_{L^2(\Omega)}-(u^1,w_1(0))_{L^2(\Omega)}-\sum_{l=1}^j\tau_n(\delta z_n^{l-1},\delta^2 w_n^l)_{L^2(\Omega)}.
\end{aligned}
\end{equation}
Since $\sigma_n^j\le \sigma_n^{j-1}$ and $b$ is non decreasing, the last term in the left--hand side is non negative. Hence, by arguing as in Lemma~\ref{lem:ph_bound} and in Remark~\ref{rem:ph_usec}, we can find a constant $C>0$, independent of $n$, such that
\begin{equation*}
\max_{j=1,\dots,n}\left[\norm{\delta z_n^j}_{L^2(\Omega)}+\norm{z_n^j}_{H^1(\Omega)}\right]+\sum_{j=1}^n\tau_n\norm{\delta^2 z_n^j}^2_{H^{-1}_{D_1}(\Omega)}\le C.
\end{equation*}

Let $z_n$, $z_n'$, $\overline z_n$, $\overline z_n'$, $\underline z_n$, and $\underline z_n'$ be the piecewise affine, the backward, and the forward interpolants of $\{z_n^j\}_{j=1}^n$ and $\{\delta z_n^j\}_{j=1}^n$, respectively. As in Lemma~\ref{lem:ph_con}, the previous estimate implies the existence of a subsequence of $n$, not relabeled, and function $z$ satisfying~\eqref{eq:ph_greg1},~\eqref{eq:ph_greg2} and the initial conditions $z(0)=u^0$ and $\dot z(0)=u^1$, such that the following convergences hold as $n\to\infty$:
\begin{align*}
&z_n\rightharpoonup z\quad\text{in }H^1(0,T;L^2(\Omega;\mathbb R^d)),& & z_n'\rightharpoonup \dot z\quad\text{in }H^1(0,T;H^{-1}_{D_1}(\Omega;\mathbb R^d)),\\
&z_n\to z\quad\text{in }C^0([0,T];L^2(\Omega;\mathbb R^d)),& & z_n'\to \dot z\quad\text{in }C^0([0,T];H^{-1}_{D_1}(\Omega;\mathbb R^d)),\\
&\overline z_n,\underline z_n\rightharpoonup z\quad\text{in }L^2(0,T;H^1(\Omega;\mathbb R^d)),& & \overline z_n',\underline z_n'\rightharpoonup \dot z\quad\text{in }L^2(0,T;L^2(\Omega;\mathbb R^d)).
\end{align*}

We now define the backward interpolant $\overline \sigma_n$ and the forward interpolant $\underline \sigma_n$ of $\{\sigma_n^j\}_{j=1}^n$. By integrating the equation~\eqref{eq:ph_zn} in the time interval $[t_1,t_2]\subseteq [0,T]$, we obtain
\begin{equation*}
\int_{t_1}^{t_2}\spr{\dot z_n'(t)}{\psi}_{H^{-1}_{D_1}(\Omega)}\de t+\int_{t_1}^{t_2}(b(\underline \sigma_n(t))\mathbb C E\overline z_n(t),E\psi)_{L^2(\Omega)}\de t=\int_{t_1}^{t_2}(\overline f_n(t),\psi)_{L^2(\Omega)}\de t+\int_{t_1}^{t_2}\spr{\overline g_n(t)}{\psi}_{H^{-1}_{D_1}(\Omega)}\de t
\end{equation*}
for every $\psi\in H^1_{D_1}(\Omega;\mathbb R^d)$. Thanks to the previous convergences and the fact that $\sigma\in H^1(0,T;C^0(\overline\Omega))$, we can pass to the limit as $n\to\infty$ as done in Lemma~\ref{lem:ph_lim1}, and we deduce
\begin{equation*}
\int_{t_1}^{t_2}\spr{\ddot z(t)}{\psi}_{H^{-1}_{D_1}(\Omega)}\de t+\int_{t_1}^{t_2}(b(\sigma(t))\mathbb C Ez(t),E\psi)_{L^2(\Omega)}\de t=\int_{t_1}^{t_2}(f(t),\psi)_{L^2(\Omega)}\de t+\int_{t_1}^{t_2}\spr{g(t)}{\psi}_{H^{-1}_{D_1}(\Omega)}\de t
\end{equation*}
for every $\psi\in H^1_{D_1}(\Omega;\mathbb R^d)$. By Lebesgue's differentiation theorem and a density argument we can conclude that the function $z$ solves~\eqref{eq:ph_weak2} for a.e. $t\in(0,T)$ and for every $\psi\in H^1_{D_1}(\Omega;\R^d)$.

To show the uniqueness result, we adapt a standard technique due to Ladyzenskaya (see~\cite{Lad}). Let $z_1$ and $z_2$ be two solutions to~\eqref{eq:ph_weak2} satisfying~\eqref{eq:ph_greg1},~\eqref{eq:ph_greg2}, and the initial conditions $u^0$ and $u^1$. The function $z=:z_1-z_2$ belongs to the space $L^\infty(0,T;H^1_{D_1}(\Omega;\mathbb R^d))\cap W^{1,\infty} (0,T;L^2(\Omega;\mathbb R^d))\cap H^2(0,T;H^{-1}_{D_1}(\Omega;\mathbb R^d))$, and for a.e. $t\in(0,T)$ solves
\begin{equation*}
\spr{\ddot z(t)}{\psi}_{H^{-1}_{D_1}(\Omega)}+(b(\sigma(t))\mathbb C Ez(t),E\psi)_{L^2(\Omega)}=0\quad\text{for every }\psi\in H^1_{D_1}(\Omega;\mathbb  R^d),
\end{equation*}
with initial conditions $z(0)=\dot z(0)=0$. We fix $s\in(0,T]$, and we consider the function
\begin{equation*}
\varphi(t)=
\begin{cases}
-\int_t^s z(r)\de r\quad&\text{if }t\in[0,s],\\
0\quad&\text{if }t\in[s,T].
\end{cases}
\end{equation*}
Clearly, we have $\varphi\in C^0([0,T];H^1_{D_1}(\Omega;\mathbb R^d))$ and $\varphi(s)=0$. Moreover
\begin{equation*}
\dot \varphi(t)=
\begin{cases}
z(t)\quad&\text{if }t\in[0,s),\\
0\quad&\text{if }t\in(s,T],
\end{cases}
\end{equation*}
which implies $\dot \varphi\in L^\infty(0,T;H^1_{D_1}(\Omega;\mathbb R^d))$. We use $\varphi(t)$ as test function in~\eqref{eq:ph_weak2} and we integrate in $[0,s]$ to deduce
\begin{equation}\label{eq:ph_uniqes}
\int_0^s\spr{\ddot z(t)}{\varphi(t)}_{H^{-1}_{D_1}(\Omega)}\de t+\int_0^s(b(\sigma(t))\mathbb C Ez(t),E\varphi(t))_{L^2(\Omega)}\de t=0.
\end{equation}
By integration by parts, the first term becomes
\begin{equation*}
\int_0^s \spr{\ddot z(t)}{\varphi(t)}_{H^{-1}_{D_1}(\Omega)} \de t=-\int_0^s(\dot z(t),z(t))_{L^2(\Omega)}\de t=-\frac{1}{2}\norm{ z(s)}_{L^2(\Omega)}^2,
\end{equation*}
since $\varphi(s)=\dot z(0)=z(0)=0$. Moreover, the function $t\mapsto (b(\sigma(t))\mathbb CE\varphi(t),E\varphi(t))_{L^2(\Omega)}$ is absolutely continuous on $[0,T]$, because $\varphi\in H^1(0,T;H^1_{D_1}(\Omega;\R^d))$ and $\sigma\in H^1(0,T;C^0(\overline\Omega))$. Hence, we can integrate by parts the second terms of~\eqref{eq:ph_uniqes} to obtain
\begin{align*}
&\int_0^s(b(\sigma(t))\mathbb C E(z(t)),E\varphi(t))_{L^2(\Omega)}\de t\\
&=-\frac{1}{2}\int_0^s (\dot b(\sigma(t))\dot \sigma(t)\mathbb C E\varphi(t),E\varphi(t))_{L^2(\Omega)}\de t-\frac{1}{2}(b(\sigma(0)) \mathbb C E\varphi(0),E\varphi(0))_{L^2(\Omega)},
\end{align*}
since $\varphi(s)=0$. These two identities imply that $z$ and $\varphi$ satisfy
\begin{equation*}
\norm{z(s)}_{L^2(\Omega)}^2+(b(\sigma(0))\mathbb C E\varphi(0),E\varphi(0))_{L^2(\Omega)}=-\int_0^s (\dot b(\sigma(t))\dot \sigma(t)\mathbb C E\varphi(t),E\varphi(t))_{L^2(\Omega)}\de t.
\end{equation*}
In particular, we get
\begin{equation*}
\norm{z(s)}_{L^2(\Omega)}^2+\eta \lambda_0\norm{E\varphi(0)}_{L^2(\Omega)}^2 \le \dot b(\norm{\sigma}_{L^\infty(0,T;C^0(\overline\Omega))}) \norm{\mathbb C}_{L^\infty(\Omega)}\int_0^s\norm{\dot \sigma(t)}_{L^\infty(\Omega)}\norm{E\varphi(t)}_{L^2(\Omega)}^2\de t,
\end{equation*}
since $\dot b$ is non decreasing. Let us define $\zeta(t):=\int_0^t z(r)\de r$ for $t\in [0,s]$. Since $\varphi(t)=\zeta(t)-\zeta(s)$ for $t\in[0,s]$, we deduce that $\norm{ E\varphi(0)}_{L^2(\Omega)}=\norm{E\zeta(s)}_{L^2(\Omega)}$ and
\begin{align*}
\int_0^s\norm{\dot \sigma(t)}_{L^\infty(\Omega)}\norm{ E\varphi(t)}_{L^2(\Omega)}^2\de t&\le 2\norm{E\zeta(s)}_{L^2(\Omega)}^2\int_0^s\norm{ \dot \sigma(t)}_{L^\infty(\Omega)} \de t+2\int_0^s\norm{\dot \sigma(t)}_{L^\infty(\Omega)}\norm{ E\zeta(t)}_{L^2(\Omega)}^2\de t\\
&\le 2\sqrt{s}\norm{ \dot \sigma}_{L^2(0,T;C^0(\overline\Omega))}\norm{E\zeta(s)}_{L^2(\Omega)}^2+2\int_0^s\norm{\dot \sigma(t)}_{L^\infty(\Omega)}\norm{E\zeta(t)}_{L^2(\Omega)}^2\de t.
\end{align*}
Hence, we have
\begin{align*}
&\norm{z(s)}_{L^2(\Omega)}^2+\left[\eta \lambda_0-2 \dot b(\norm{\sigma}_{L^\infty(0,T;C^0(\overline\Omega))}) \norm{\mathbb C}_{L^\infty(\Omega)}\norm{ \dot \sigma}_{L^2(0,T;C^0(\overline\Omega))}\sqrt{s}\right]\norm{E\zeta(s)}_{L^2(\Omega)}^2\\
&\le 2 \dot b(\norm{\sigma}_{L^\infty(0,T;C^0(\overline\Omega))})\norm{\mathbb C}_{L^\infty(\Omega)}\int_0^s\norm{\dot \sigma(t)}_{L^\infty(\Omega)}\norm{E\zeta(t)}_{L^2(\Omega)}^2\de t.
\end{align*}
Let us set
\begin{equation*}
t_0:=\left[\frac{\eta \lambda_0}{4 \dot b(\norm{\sigma}_{L^\infty(0,T;C^0(\overline\Omega)})\norm{\mathbb C}_{L^\infty(\Omega)}\norm{ \dot \sigma}_{L^2(0,T;C^0(\overline\Omega))}}\right]^2.
\end{equation*}
By the previous estimate, for every $s\in[0,t_0]$ we derive
\begin{equation*}
\norm{z(s)}_{L^2(\Omega)}^2+\frac{\eta \lambda_0}{2}\norm{E\zeta(s)}_{L^2(\Omega)}^2\le 2 \dot b(\norm{\sigma}_{L^\infty(0,T;C^0(\overline\Omega))}) \norm{\mathbb C}_{L^\infty(\Omega)}\int_0^s\norm{\dot \sigma(t)}_{L^\infty(\Omega)}\norm{E\zeta(t)}_{L^2(\Omega)}^2\de t.
\end{equation*}
Thanks to Gronwall's lemma (see, e.g.,~\cite[Chapitre XVIII, \S 5, Lemme 1]{DL}), this inequality implies that $z(s)=E\zeta(s)=0$ for every $s\in[0,t_0]$. Since $t_0$ depends only on $\mathbb C$, $b$, and $\sigma$, we can repeat this procedure starting from $t_0$ and, with a finite number of steps, we obtain that $z=0$ on the whole interval $[0,T]$.
\end{proof}

\begin{corollary}\label{coro:ph_enin3}
Let $w_1$, $f$, $g$, $u^0$, $u^1$, and $\sigma$ be as in Lemma~\ref{lem:ph_exis}. Then the unique solution $z$ to~\eqref{eq:ph_weak2} associated to these data satisfies for every $t\in[0,T]$  the following energy--dissipation inequality 
\begin{equation}\label{eq:ph_cenin3}
\begin{aligned}
&\mathcal K(\dot z(t))+\mathcal E(z(t), \sigma(t))-\frac{1}{2}\int_0^t (\dot b(\sigma(s))\dot \sigma(s)\mathbb CEz(s),Ez(s))_{L^2(\Omega)}\de s\\
&\le\mathcal K(u^1)+\mathcal E(u^0, \sigma(0))+\mathcal W_{tot}(z,\sigma;0,t).
\end{aligned}
\end{equation}
\end{corollary}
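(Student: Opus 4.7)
The strategy is to pass to the limit $n\to\infty$ in the discrete energy inequality~\eqref{eq:ph_denin2} already established inside the proof of Lemma~\ref{lem:ph_exis}, using the time interpolants $z_n$, $z_n'$, $\overline z_n$, $\underline z_n$, $\overline z_n'$, $\underline z_n'$ and the weak/strong convergences derived there. This is essentially the same scheme used to deduce~\eqref{eq:ph_enin} in Lemma~\ref{lem:ph_enin}, with the new dissipation term involving $\dot b(\sigma)\dot\sigma$ as the only substantive novelty.

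For $t=0$ the inequality reduces to an equality given by the initial conditions. Fix $t\in(0,T]$ and, for each $n$, let $j_n\in\{1,\dots,n\}$ be the unique index with $t\in((j_n-1)\tau_n,j_n\tau_n]$; set $t_n:=j_n\tau_n\to t$. Rewriting~\eqref{eq:ph_denin2} in terms of the interpolants (using the notation $\overline\sigma_n$, $\underline\sigma_n$ for the backward and forward interpolants of $\{\sigma_n^j\}$) yields
\begin{equation*}
\mathcal K(\overline z_n'(t))+\mathcal E(\overline z_n(t),\overline\sigma_n(t))-\frac{1}{2}\int_0^{t_n}(\alpha_n(s)\mathbb CE\overline z_n(s),E\overline z_n(s))_{L^2(\Omega)}\de s\le \mathrm{RHS}_n,
\end{equation*}
where $\alpha_n(s):=[b(\overline\sigma_n(s))-b(\underline\sigma_n(s))]/\tau_n\le 0$ since $\sigma$ is non--increasing and $b$ non--decreasing, and $\mathrm{RHS}_n$ collects the forcing and boundary--work terms. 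Applying verbatim the computation of Lemma~\ref{lem:ph_enin} — based on $\overline f_n\to f$ in $L^2(0,T;L^2(\Omega;\R^d))$, $\dot g_n\to\dot g$ in $L^2(0,T;H^{-1}_{D_1}(\Omega;\R^d))$, the strong continuity of $g$, $w_1$, $\dot w_1$, and the pointwise weak convergences $\overline z_n(t_n)\rightharpoonup z(t)$ in $H^1(\Omega;\R^d)$ and $\overline z_n'(t_n)\rightharpoonup\dot z(t)$ in $L^2(\Omega;\R^d)$ (derived from the a priori bounds as in Remark~\ref{rem:ph_pointcon}) — one obtains
\begin{equation*}
\lim_{n\to\infty}\mathrm{RHS}_n=\mathcal K(u^1)+\mathcal E(u^0,\sigma(0))+\mathcal W_{tot}(z,\sigma;0,t).
\end{equation*}
For the remaining terms on the left--hand side, weak lower semicontinuity in $L^2$ gives $\mathcal K(\dot z(t))\le\liminf_n\mathcal K(\overline z_n'(t))$; since $\sigma\in H^1(0,T;C^0(\overline\Omega))\hookrightarrow C^0([0,T];C^0(\overline\Omega))$ we have $\overline\sigma_n(t)\to\sigma(t)$ uniformly on $\overline\Omega$, and the Ioffe--Olech argument used at the end of Lemma~\ref{lem:ph_enin} applied to $\phi(x,y,\xi):=\frac{1}{2}b(y)\mathbb C(x)\xi^{sym}\cdot\xi^{sym}$ yields $\mathcal E(z(t),\sigma(t))\le\liminf_n\mathcal E(\overline z_n(t),\overline\sigma_n(t))$.

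The delicate step — and the main obstacle, since it has no analogue in Lemma~\ref{lem:ph_enin} — is the dissipation term. Using $b\in C^1(\R)$, $\sigma\in H^1(0,T;C^0(\overline\Omega))$, and a pointwise Lagrange--theorem argument combined with the dominated convergence theorem, one shows $\alpha_n\to\dot b(\sigma)\dot\sigma$ strongly in $L^2(0,T;C^0(\overline\Omega))$. Since $-\alpha_n\ge 0$, the integrand $\psi_n(s,x,\xi):=-\frac{1}{2}\alpha_n(s,x)\mathbb C(x)\xi^{sym}\cdot\xi^{sym}$ is non--negative and convex in $\xi$, so an Ioffe--Olech--type lower semicontinuity theorem, combined with the weak convergence $E\overline z_n\rightharpoonup Ez$ in $L^2(0,T;L^2(\Omega;\R^{d\times d}))$ and the strong convergence of $\alpha_n$, delivers
\begin{equation*}
-\frac{1}{2}\int_0^t(\dot b(\sigma(s))\dot\sigma(s)\mathbb CEz(s),Ez(s))_{L^2(\Omega)}\de s\le\liminf_{n\to\infty}\left[-\frac{1}{2}\int_0^{t_n}(\alpha_n(s)\mathbb CE\overline z_n(s),E\overline z_n(s))_{L^2(\Omega)}\de s\right].
\end{equation*}
Combining the three lower--semicontinuity estimates with the limit of $\mathrm{RHS}_n$ yields~\eqref{eq:ph_cenin3} for every $t\in(0,T]$, and hence for every $t\in[0,T]$.
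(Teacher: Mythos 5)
Your proof is correct and follows essentially the same scheme as the paper: rewrite the discrete energy inequality in terms of interpolants, pass to the limit in the forcing/boundary terms as in Lemma~\ref{lem:ph_enin}, use weak lower semicontinuity for $\mathcal K$ and Ioffe--Olech for $\mathcal E$, and handle the dissipation term via Lagrange's theorem applied to $b$. The one small difference is in the dissipation term: the paper establishes $\frac{1}{\tau_n}[b(\overline\sigma_n(s))-b(\underline\sigma_n(s))]\to\dot b(\sigma(s))\dot\sigma(s)$ in $L^2(\Omega)$ only for a.e.\ $s$ (via a Lebesgue--point argument and dominated convergence with an $s$--dependent bound), applies Ioffe--Olech pointwise in $s$, and then integrates using Fatou's lemma; you instead upgrade to strong convergence $\alpha_n\to\dot b(\sigma)\dot\sigma$ in $L^2(0,T;C^0(\overline\Omega))$ (which does hold, since the difference quotient $\frac{1}{\tau_n}[\overline\sigma_n-\underline\sigma_n]$ is the $L^2$--averaging projection of $\dot\sigma$ and the outer factor $\dot b(r_n)$ is dominated and converges uniformly in $x$ for every $s$) and then invoke convex lower semicontinuity once on the space--time cylinder, trading Fatou's lemma for the standard splitting $-\alpha_n\mathbb C E\overline z_n\cdot E\overline z_n = -\alpha\mathbb C E\overline z_n\cdot E\overline z_n + (\alpha-\alpha_n)\mathbb C E\overline z_n\cdot E\overline z_n$, the second piece vanishing thanks to the uniform $L^\infty(0,T;L^2)$ bound on $E\overline z_n$; both routes are fine and of comparable length.
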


\begin{proof}
For $t=0$ the inequality~\eqref{eq:ph_cenin3} is trivially true, thanks to the initial conditions of $z$. We fix $t\in(0,T]$ and we write the inequality~\eqref{eq:ph_denin2} as
\begin{equation}
\begin{aligned}\label{eq:ph_sz1}
&\mathcal K(\overline z_n'(t))+\mathcal E(\overline z_n(t), \overline \sigma_n(t))-\frac{1}{2\tau_n}\int_0^{t_n}([b(\overline \sigma_n(s))-b(\underline \sigma_n(s))]\mathbb C E\overline z_n(s),E\overline z_n(s))_{L^2(\Omega)}\de s\\
&\le \mathcal K(u^1)+\mathcal E(u^0, \sigma(0))+\int_0^{t_n}[(\overline f_n(s),\overline z_n'(s)-\overline w_n'(s))_{L^2(\Omega)}+(b(\underline \sigma_n(s))\mathbb CE\overline z_n(s),E\overline w_n'(s))_{L^2(\Omega)}]\de s\\
&\quad+\spr{\overline g_n(t)}{\overline z_n(t)-\overline w_n(t)}_{H^{-1}_{D_1}(\Omega)}-\spr{g(0)}{u^0-w_1(0)}_{H^{-1}_{D_1}(\Omega)}-\int_0^{t_n}\spr{\dot g_n(s)}{\underline z_n(s)-\underline w_n(s)}_{H^{-1}_{D_1}(\Omega)}\de s\\
&\quad+(\overline z_n'(t),\overline w_n'(t))_{L^2(\Omega)}-(u^1,w_1(0))_{L^2(\Omega)}-\int_0^{t_n}(\underline z_n'(s),\dot w_n'(s))_{L^2(\Omega)}\de s,
\end{aligned}
\end{equation}
where $t_n:=j\tau_n$, and $j$ is the unique element in $\{1,\dots,n\}$ for which $t\in((j-1)\tau_n,j\tau_n]$. To pass to the limit as $n\to\infty$ in~\eqref{eq:ph_sz1}, we follow the same procedure adopted in Lemma~\ref{lem:ph_enin}. Notice that $\overline z_n(t)\rightharpoonup z(t)$ in $H^1(\Omega;\R^d)$ and $\overline z_n'(t)\rightharpoonup \dot z(t)$ in $L^2(\Omega;\R^d)$, by arguing as in Remark~\ref{rem:ph_pointcon}, while $\overline \sigma_n(t)\to \sigma(t)$ in $C^0(\overline\Omega)$. Hence, we derive
\begin{align}
&\mathcal K(\dot z(t))\le\liminf_{n\to\infty}\mathcal K(\overline z_n'(t)),\quad\mathcal E(z(t),\sigma(t))\le \liminf_{n\to\infty}\mathcal E(\overline z_n(t),\overline \sigma_n(t)). 
\end{align}
Similarly, we combine the convergences given by the previous lemma, with $\underline \sigma_n(s)\to \sigma(s)$ in $C^0(\overline\Omega)$ for every $s\in[0,T]$ and $t_n\to t$ as $n\to\infty$, to deduce
\begin{align}
&\lim_{n\to\infty}\int_0^{t_n}(\overline f_n(s),\overline z_n'(s)-\overline w_n'(s))_{L^2(\Omega)}\de s=\int_0^t(f(s),\dot z(s)-\dot w_1(s))_{L^2(\Omega)}\de s,\\
&\lim_{n\to\infty}\int_0^{t_n}(b(\underline \sigma_n(s))\mathbb CE\overline z_n(s),E\overline w_n'(s))_{L^2(\Omega)}]\de s=\int_0^t(b(\sigma(s))\mathbb CEz(s),E\dot w(s))_{L^2(\Omega)}\de s,\\
&\lim_{n\to\infty}\int_0^{t_n}(\underline z_n'(s),\dot w_n'(s))_{L^2(\Omega)}\de s=\int_0^t(\dot z(s),\ddot w_1(s))_{L^2(\Omega)}\de s,\\
&\lim_{n\to\infty}\int_0^{t_n}\spr{\dot g_n(s)}{\underline z_n(s)-\underline w_n(s)}_{H^{-1}_{D_1}(\Omega)}\de s=\int_0^t\spr{\dot g(s)}{z(s)-w_1(s)}_{H^{-1}_{D_1}(\Omega)}\de s,\\
&\lim_{n\to\infty}(\overline z_n'(t),\overline w_n'(t))_{L^2(\Omega)}=(\dot z(t),\dot w_1(t))_{L^2(\Omega)},\\
&\lim_{n\to\infty}\spr{\overline g_n(t)}{\overline z_n(t)-\overline w_n(t)}_{H^{-1}_{D_1}(\Omega)}=\spr{g(t)}{z(t)-w_1(t)}_{H^{-1}_{D_1}(\Omega)}.\label{eq:ph_sz4}
\end{align}
Finally, for a.e. $s\in(0,T)$ we have
\begin{equation}\label{eq:ph_dotsigman}
\left\Vert\frac{\overline \sigma_n(s)-\underline \sigma_n(s)}{\tau_n}-\dot \sigma(s)\right\Vert_{L^\infty(\Omega)}\le \frac{1}{\tau_n}\int_{s-\tau_n}^{s+\tau_n}\norm{\dot \sigma(r)-\dot\sigma(s)}_{L^\infty(\Omega)}\to 0\quad\text{as $n\to\infty$},
\end{equation}
since $\dot \sigma\in L^2(0,T;C^0(\overline\Omega))$.
Let us fix $s\in(0,T)$ for which~\eqref{eq:ph_dotsigman} holds. By Lagrange's theorem for every $x\in\Omega$ there exists a point $r_n(s,x)\in[\overline \sigma_n(s,x),\underline \sigma_n(s,x)]$ such that
\begin{equation*}
\frac{b(\overline \sigma_n(s,x))-b(\underline \sigma_n(s,x))}{\tau_n}=\dot b(r_n(s,x))\frac{\overline \sigma_n(s,x)-\underline \sigma_n(s,x)}{\tau_n}.
\end{equation*}
Notice that $r_n(s,x)\to \sigma(s,x)$ as $n\to\infty$ for every $x\in\Omega$. Hence, for a.e. $s\in(0,T)$ we get
\begin{align*}
\lim_{n\to\infty}\frac{b(\overline \sigma_n(s,x))-b(\underline \sigma_n(s,x))}{\tau_n}=\dot b(\sigma(s,x))\dot \sigma(s,x)\quad\text{for every $x\in\Omega$}.
\end{align*}
Furthermore, thanks to~\eqref{eq:ph_dotsigman} there is a constant $C_s>0$, which may depend on $s$, but it is independent of $n$, such that for every $x\in\Omega$
\begin{align*}
\left|\frac{b(\overline \sigma_n(s,x))-b(\underline \sigma_n(s,x))}{\tau_n}\right|\le \dot b(\norm{\sigma}_{L^\infty(0,T;C^0(\overline\Omega))})\left\Vert\frac{\overline \sigma_n(s)-\underline \sigma_n(s)}{\tau_n}\right\Vert_{L^\infty(\Omega)}\le \dot b(\norm{\sigma}_{L^\infty(0,T;C^0(\overline\Omega))})C_s.
\end{align*}
Therefore, for a.e. $s\in(0,T)$ we can apply the dominated convergence theorem to deduce 
\begin{equation*}
\frac{b(\overline \sigma_n(s))-b(\underline \sigma_n(s))}{\tau_n}\to \dot b(\sigma(s))\dot \sigma(s)\quad\text{in $L^2(\Omega)$}\quad\text{as $n\to\infty$}.
\end{equation*}
The function $\phi(x,y,\xi):=\frac{1}{2}\abs{y}\mathbb C(x) \xi^{sym}\cdot \xi^{sym}$, $(x,y,\xi)\in\Omega\times\mathbb R\times\mathbb R^{d\times d}$, satisfies the assumptions of Ioffe--Olech's theorem, while $E\overline z_n(s)\rightharpoonup Ez(s)$ in $L^2(\Omega;\mathbb R^{d\times d})$ for every $s\in[0,T]$. Then, we have
\begin{align*}
-\frac{1}{2}(\dot b(\sigma(s))\dot \sigma(s)\mathbb CEz(s),Ez(s))_{L^2(\Omega)}&=\int_\Omega \phi(x,\dot b(\sigma(s))\dot \sigma(s,x),Ez(s,x))\de x\\
&\le\liminf_{n\to\infty}\int_\Omega \phi\left(x,\frac{b(\overline \sigma_n(s,x))-b(\underline \sigma_n(s,x))}{\tau_n},E\overline z_n(s,x)\right)\de x\\
&=\liminf_{n\to\infty}\left[-\frac{1}{2\tau_n}([b(\overline \sigma_n(s))-b(\underline \sigma_n(s))]\mathbb C E\overline z_n(s),E\overline z_n(s))_{L^2(\Omega)}\right]
\end{align*}
for a.e. $s\in(0,T)$, being $b(\overline\sigma_n(s))\le b(\underline\sigma_n(s))$ in $\Omega$. In particular, thanks to Fatou's lemma we get
\begin{equation}
\begin{aligned}\label{eq:ph_sz3}
&-\frac{1}{2}\int_0^t(\dot b(\sigma(s))\dot \sigma(s)\mathbb CEz(s),Ez(s))_{L^2(\Omega)}\de s\\
&\le\int_0^t\liminf_{n\to\infty}\left[-\frac{1}{2\tau_n}([b(\overline \sigma_n(s))-b(\underline \sigma_n(s))]\mathbb C E\overline z_n(s),E\overline z_n(s))_{L^2(\Omega)}\right]\de s\\
&\le \liminf_{n\to\infty}\left[-\frac{1}{2\tau_n}\int_0^{t_n}([b(\overline \sigma_n(s))-b(\underline \sigma_n(s))]\mathbb C E\overline z_n(s),E\overline z_n(s))_{L^2(\Omega)}\de s\right],
\end{aligned}
\end{equation}
since $t\le t_n$. By combining~\eqref{eq:ph_sz1}--\eqref{eq:ph_sz4} with~\eqref{eq:ph_sz3}  we deduce the inequality~\eqref{eq:ph_cenin3} for every $t\in(0,T]$.
\end{proof}

The other inequality, at least for a.e. $t\in(0,T)$, is a consequence of the equation~\eqref{eq:ph_weak2}.

\begin{lemma}\label{lem:ph_enin2}
Let $w_1$, $f$, $g$, $u^0$, $u^1$, and $\sigma$ be as in Lemma~\ref{lem:ph_exis}. Then the unique solution $z$ to~\eqref{eq:ph_weak2} associated to these data satisfies for a.e. $t\in(0,T)$ 
\begin{equation}\label{eq:ph_cenin4}
\begin{aligned}
&\mathcal K(\dot z(t))+\mathcal E(z(t),\sigma(t))-\frac{1}{2}\int_0^t(\dot b(\sigma(s))\dot \sigma(s)\mathbb CEz(s),Ez(s))_{L^2(\Omega)}\de s\\
&\ge\mathcal K(u^1)+\mathcal E(u^0, \sigma(0))+\mathcal W_{tot}(z,\sigma;0,t).
\end{aligned}
\end{equation}
\end{lemma}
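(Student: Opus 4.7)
The plan is to mimic the difference-quotient argument used in Lemma~\ref{lem:ph_equiv}, adapted to the equation~\eqref{eq:ph_weak2}. First, I fix $0<h<T$ and test~\eqref{eq:ph_weak2} at times $t$ and $t+h$ against
\[
\psi_h(t):=\frac{z(t+h)-z(t)}{h}-\frac{w_1(t+h)-w_1(t)}{h},\qquad t\in[0,T-h].
\]
After summing the two identities and integrating over $[t_1,t_2]\subseteq(0,T-h)$, the inertial, elastic, forcing, and boundary-data terms are manipulated exactly as in the proof of Lemma~\ref{lem:ph_equiv}, with $\sigma$ in place of $v$. In particular, the passage to the limit in the term $\tfrac{1}{h}[b(\sigma(\cdot+h))-b(\sigma(\cdot))]$ relies on $\sigma\in H^1(0,T;C^0(\overline\Omega))$ and $b\in C^1(\R)$, via Lagrange's theorem and dominated convergence.

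The delicate point, and the main obstacle, is that $z$ is only a \emph{generalized} solution: it belongs to $L^\infty(0,T;H^1(\Omega;\mathbb R^d))\cap W^{1,\infty}(0,T;L^2(\Omega;\mathbb R^d))$ rather than to the $C^0\cap C^1$ spaces used in Lemma~\ref{lem:ph_equiv}. Consequently, pointwise limits as $h\to 0^+$ must be taken at Lebesgue points. For a.e.\ $t\in(0,T)$, the vector-valued Lebesgue differentiation theorem yields
\[
\frac{1}{h}\int_t^{t+h}\dot z(s)\,\de s\to\dot z(t)\quad\text{strongly in $L^2(\Omega;\mathbb R^d)$},
\]
and hence $\psi_h(t)\to\dot z(t)-\dot w_1(t)$ strongly in $L^2$; combined with the weak continuity $\dot z(t+h)+\dot z(t)\rightharpoonup 2\dot z(t)$ in $L^2$ (from $\dot z\in C_w^0([0,T];L^2)$, as in Remark~\ref{rem:ph_weak_con}), the inertial boundary pairings $(\dot z(\cdot+h)+\dot z(\cdot),\psi_h(\cdot))_{L^2(\Omega)}$ converge to the expected limits by weak$\times$strong duality at each such Lebesgue point. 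This yields, for a.e.\ $t_1,t_2\in(0,T)$ with $t_1<t_2$, the energy \emph{equality}
\begin{align*}
&\mathcal K(\dot z(t_2))+\mathcal E(z(t_2),\sigma(t_2))-\frac{1}{2}\int_{t_1}^{t_2}(\dot b(\sigma(s))\dot\sigma(s)\mathbb CEz(s),Ez(s))_{L^2(\Omega)}\,\de s\\
&=\mathcal K(\dot z(t_1))+\mathcal E(z(t_1),\sigma(t_1))+\mathcal W_{tot}(z,\sigma;t_1,t_2).
\end{align*}

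To deduce the inequality~\eqref{eq:ph_cenin4} anchored at $t_1=0$, I then fix a $t_2\in(0,T)$ at which the identity above holds for a.e.\ $t_1\in(0,t_2)$, and let $t_1\to 0^+$ through such Lebesgue points. The dissipation integral and the total work $\mathcal W_{tot}(z,\sigma;t_1,t_2)$ are continuous in $t_1$. The initial conditions $\dot z(0)=u^1$ and $z(0)=u^0$, together with the weak continuity of $\dot z$ in $L^2$ and of $z$ in $H^1$, give $\dot z(t_1)\rightharpoonup u^1$ in $L^2$ and $z(t_1)\rightharpoonup u^0$ in $H^1$; meanwhile $\sigma(t_1)\to\sigma(0)$ strongly in $C^0(\overline\Omega)$. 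By the lower semicontinuity of the $L^2$-norm and by Ioffe--Olech's theorem applied to $\phi(x,y,\xi):=\tfrac{1}{2}b(y)\mathbb C(x)\xi^{sym}\cdot\xi^{sym}$ (as in Lemma~\ref{lem:ph_enin}), one obtains
\[
\liminf_{t_1\to 0^+}\mathcal K(\dot z(t_1))\ge\mathcal K(u^1),\qquad \liminf_{t_1\to 0^+}\mathcal E(z(t_1),\sigma(t_1))\ge\mathcal E(u^0,\sigma(0)).
\]
Taking $\liminf$ as $t_1\to 0^+$ in the equality above then yields~\eqref{eq:ph_cenin4} for a.e.\ $t_2\in(0,T)$.
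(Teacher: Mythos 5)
Your proposal is correct and follows precisely the strategy of the paper's proof, which is deliberately terse: ``proceed as in Lemma~\ref{lem:ph_equiv}, by using Lebesgue's differentiation theorem and exploiting the regularity properties $z\in C_w^0([0,T];H^1(\Omega;\R^d))$ and $\dot z\in C_w^0([0,T];L^2(\Omega;\R^d))$,'' obtaining the energy equality for a.e.\ $t_1,t_2$ and then sending $t_1\to 0^+$ using continuity of the left-hand side and lower semicontinuity of the right-hand side. You have usefully spelled out the two technical points the paper compresses into one sentence, namely the replacement of pointwise continuity by Lebesgue points plus weak$\times$strong duality in the inertial boundary terms, and the use of weak lower semicontinuity of $\mathcal K$ together with Ioffe--Olech for $\mathcal E$ in the limit $t_1\to 0^+$. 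One minor point worth making explicit, which your phrase ``manipulated exactly as in the proof of Lemma~\ref{lem:ph_equiv}'' glosses over, is the elastic-energy difference quotient $\frac{1}{h}([b(\sigma(t+h))-b(\sigma(t))]\mathbb C Ez(t),Ez(t+h))_{L^2(\Omega)}$: in Lemma~\ref{lem:ph_equiv} the strong convergence $Eu(t+h)\to Eu(t)$ in $L^2$ comes from $u\in C^0([0,T];H^1)$, whereas here one only has $z\in C_w^0([0,T];H^1(\Omega;\R^d))$, so $Ez(t+h)\rightharpoonup Ez(t)$ weakly; this suffices because the factor $\frac{1}{h}[b(\sigma(t+h))-b(\sigma(t))]\mathbb C Ez(t)$ converges strongly in $L^2(\Omega;\R^{d\times d})$ at a.e.\ $t$, so the pairing converges by the same weak$\times$strong mechanism you invoke for the inertial term.
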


\begin{proof}
It is enough to proceed as done in Lemma~\ref{lem:ph_equiv}, by using Lebesgue's differentiation theorem and exploiting the regularity properties $z\in C_w^0([0,T];H^1(\Omega;\R^d))$ and $\dot z\in C_w^0([0,T];L^2(\Omega;\R^d))$. This ensures that $z$ satisfies
\begin{equation*}
\begin{aligned}
&\mathcal K(\dot z(t_2))+\mathcal E(z(t_2),\sigma(t_2))-\frac{1}{2}\int_{t_1}^{t_2}(\dot b(\sigma(s))\dot \sigma(s)\mathbb CEz(s),Ez(s))_{L^2(\Omega)}\de s\\
&=\mathcal K(\dot z(t_1))+\mathcal E(z(t_1), \sigma(t_1))+\mathcal W_{tot}(z,\sigma;t_1,t_2)
\end{aligned}
\end{equation*}
for a.e. $t_1,t_2\in(0,T)$ with $t_1<t_2$. Since the right--hand side is lower semicontinuous with respect to $t_1$, while the left--hand side is continuous, sending $t_1\to 0^+$ we deduce~\eqref{eq:ph_cenin4}.
\end{proof}

By combining the two previous results we obtain that the solution $z$ to~\eqref{eq:ph_weak2} satisfies an energy--dissipation balance for a.e. $t\in(0,T)$. Actually, this is true for every time, as shown in the following lemma.

\begin{lemma}\label{lem:ph_seq}
Let $w_1$, $f$, $g$, $u^0$, $u^1$, and $\sigma$ be as in Lemma~\ref{lem:ph_exis}. Then the unique solution $z$ to~\eqref{eq:ph_weak2} associated to these data satisfies for every $t\in[0,T]$ the energy--dissipation balance 
\begin{align*}
&\mathcal K(\dot z(t))+\mathcal E(z(t),\sigma(t))-\frac{1}{2}\int_0^t (\dot b(\sigma(s))\dot \sigma(s)\mathbb CEz(s),Ez(s))_{L^2(\Omega)}\de s\\
&=\mathcal K(u^1)+\mathcal E(u^0, \sigma(0))+\mathcal W_{tot}(z,\sigma;0,t).
\end{align*}
In particular, the function $t\mapsto\mathcal K(\dot z(t))+\mathcal E(z(t),\sigma(t))$ is continuous from $[0,T]$ to $\mathbb R$ and
\begin{equation}\label{eq:ph_zreg}
z\in C^0([0,T];H^1(\Omega;\mathbb R^d))\cap C^1([0,T];L^2(\Omega;\mathbb R^d)).
\end{equation}
\end{lemma}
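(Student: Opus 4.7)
The plan is to combine the two previous results. Set
\[
F(t):=\mathcal K(\dot z(t))+\mathcal E(z(t),\sigma(t)),\qquad G(t):=\mathcal K(u^1)+\mathcal E(u^0,\sigma(0))+\tfrac{1}{2}\int_0^t(\dot b(\sigma)\dot\sigma\,\mathbb CEz,Ez)_{L^2(\Omega)}\de s+\mathcal W_{tot}(z,\sigma;0,t).
\]
Corollary~\ref{coro:ph_enin3} says $F\le G$ on $[0,T]$ and Lemma~\ref{lem:ph_enin2} says $F\ge G$ for a.e.\ $t$, so the energy--dissipation balance $F=G$ holds for a.e.\ $t\in(0,T)$. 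The function $G$ is continuous in $t$ thanks to the regularity of $z$, $\sigma$, $w_1$, $f$, $g$, and the continuity of $\mathcal W_{tot}$.

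To upgrade equality from a.e.\ to every $t$, I would apply the corollary in a time--shifted form, starting from any $t_0\in[0,T)$ with initial data $(z(t_0),\dot z(t_0))$. Since $z\in C_w^0([0,T];H^1(\Omega;\R^d))$ and $\dot z\in C_w^0([0,T];L^2(\Omega;\R^d))$, these data lie in the right spaces ($z(t_0)-w_1(t_0)\in H^1_{D_1}(\Omega;\R^d)$, $\dot z(t_0)\in L^2(\Omega;\R^d)$), and $\sigma$, $f$, $g$, $w_1$ restricted to $[t_0,T]$ still satisfy the hypotheses of Lemma~\ref{lem:ph_exis}. By uniqueness, the solution on $[t_0,T]$ with such initial data is precisely $z|_{[t_0,T]}$, so the shifted corollary gives $F(s)\le F(t_0)+[G(s)-G(t_0)]$ for every $s\in[t_0,T]$. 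Setting $h:=G-F\ge 0$ this reads $h(s)\ge h(t_0)$ for $s\ge t_0$, i.e.\ $h$ is non--decreasing. Since $h=0$ a.e., for any $t_0\in[0,T]$ there exists $t_1>t_0$ arbitrarily close with $h(t_1)=0$, forcing $0\le h(t_0)\le h(t_1)=0$. Therefore $h\equiv 0$ and the energy--dissipation balance holds for every $t\in[0,T]$, with $F$ continuous on $[0,T]$.

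Finally, for~\eqref{eq:ph_zreg}: take $t_n\to t_0$; weak continuity yields $\dot z(t_n)\rightharpoonup\dot z(t_0)$ in $L^2(\Omega;\R^d)$ and $Ez(t_n)\rightharpoonup Ez(t_0)$ in $L^2(\Omega;\R^{d\times d})$, while $\sigma(t_n)\to\sigma(t_0)$ in $C^0(\overline\Omega)$ (since $\sigma\in H^1(0,T;C^0(\overline\Omega))\hookrightarrow C^0([0,T];C^0(\overline\Omega))$), hence $b(\sigma(t_n))\to b(\sigma(t_0))$ uniformly. Lower semicontinuity of $\mathcal K$ and of $\mathcal E(\cdot,\sigma(\cdot))$ (Ioffe--Olech) together with $F(t_n)\to F(t_0)$ and subadditivity of $\liminf$ force the separate convergences $\mathcal K(\dot z(t_n))\to\mathcal K(\dot z(t_0))$ and $\mathcal E(z(t_n),\sigma(t_n))\to\mathcal E(z(t_0),\sigma(t_0))$. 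The first, combined with weak convergence in the Hilbert space $L^2(\Omega;\R^d)$, gives $\dot z(t_n)\to\dot z(t_0)$ strongly. For the second, I would replace $\sigma(t_n)$ by $\sigma(t_0)$ using $|\mathcal E(z(t_n),\sigma(t_n))-\mathcal E(z(t_n),\sigma(t_0))|\le C\|b(\sigma(t_n))-b(\sigma(t_0))\|_{L^\infty(\Omega)}\|Ez(t_n)\|_{L^2(\Omega)}^2\to 0$; this yields $\mathcal E(z(t_n),\sigma(t_0))\to\mathcal E(z(t_0),\sigma(t_0))$, i.e.\ convergence of the norm induced by the equivalent scalar product $\int_\Omega b(\sigma(t_0))\mathbb C\,\cdot\,\cdot\,\de x$, which combined with the weak convergence gives $Ez(t_n)\to Ez(t_0)$ in $L^2$. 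Since $z\in W^{1,\infty}(0,T;L^2(\Omega;\R^d))\hookrightarrow C^0([0,T];L^2(\Omega;\R^d))$, Korn's inequality then upgrades this to $z(t_n)\to z(t_0)$ in $H^1(\Omega;\R^d)$.

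The main obstacle is the monotonicity step: one needs to invoke the corollary starting from an arbitrary time $t_0$ with initial data of only weak--continuity regularity, and it is crucial that Lemma~\ref{lem:ph_exis} provides uniqueness so that the restriction $z|_{[t_0,T]}$ is identified with the shifted solution. Everything else is standard lower--semicontinuity and Hilbert--space bookkeeping.
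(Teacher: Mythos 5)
Your strategy is essentially the same as the paper's: use uniqueness to identify $z|_{[t_0,T]}$ with the shifted solution, apply Corollary~\ref{coro:ph_enin3} from time $t_0$, and combine with Lemma~\ref{lem:ph_enin2} to force equality everywhere. Your monotonicity reformulation (showing $h := G - F$ is non--decreasing) is a clean way to organize what the paper does by contradiction, and the additivity $\mathcal W_{tot}(z,\sigma;0,s)-\mathcal W_{tot}(z,\sigma;0,t_0)=\mathcal W_{tot}(z,\sigma;t_0,s)$ that you implicitly use is correct. The second half (upgrading weak to strong continuity via continuity of $\mathcal K+\mathcal E$, the uniform convergence of $b(\sigma(t_n))$, and Korn's inequality) also matches the paper's argument.

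However, there is a genuine gap at the endpoint $t = T$. You write ``for any $t_0\in[0,T]$ there exists $t_1>t_0$ arbitrarily close with $h(t_1)=0$'', but this fails for $t_0 = T$: there is no admissible $t_1 > T$. The monotonicity of $h$ together with $h\ge 0$ and $h = 0$ a.e.\ only yields $h(t_0)=0$ for $t_0 < T$; at $t_0 = T$ you only know $h(T)\ge h(s)=0$ for $s<T$, which gives no upper bound. Weak lower semicontinuity of $F$ is of no help either, since it reproduces the inequality $F(T)\le G(T)$ that you already have from Corollary~\ref{coro:ph_enin3}, not the reverse. The paper resolves this by first extending $\sigma$, $w_1$, $f$, $g$ to $[0,2T]$ (e.g.\ $\sigma(t):=\sigma(T)$ and $w_1(t):=2w_1(T)-w_1(2T-t)$ for $t\in(T,2T]$, which preserves the required regularity and monotonicity) and extending $z$ accordingly; then for every $t_0\in[0,T]$, including $t_0=T$, one can pick $t_1\in(t_0,2T)$ with $h(t_1)=0$ and close the argument. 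You should add this extension step; without it the statement is only established for $t\in[0,T)$.
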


\begin{proof}
We may assume that $\sigma$, $w_1$, $f$, and $g$ are defined on $[0,2T]$ and satisfy the hypotheses of Lemma~\ref{lem:ph_exis} with $T$ replaced by $2T$. As for $w_1$ and $\sigma$, it is enough to set $w_1(t) := 2w_1(T)-w_1(2T-t)$ and $\sigma(t):=\sigma(T)$ for $t\in(T, 2T]$, respectively. By Lemma~\ref{lem:ph_exis}, the solution $z$ on $[0,T]$ can be extended to a solution on $[0, 2T]$ still denoted by $z$. Thanks to Corollary~\ref{coro:ph_enin3} and Lemma~\ref{lem:ph_enin2}, the function $z$ satisfies
\begin{equation}\label{eq:ph_acen}
\begin{aligned}
&\mathcal K(\dot z(t))+\mathcal E(z(t),\sigma(t))-\frac{1}{2}\int_0^t (\dot b(\sigma(s))\dot \sigma(s)\mathbb CEz(s),Ez(s))_{L^2(\Omega)}\de s\\
&=\mathcal K(u^1)+\mathcal E(u^0, \sigma(0))+\mathcal W_{tot}(z,\sigma;0,t)
\end{aligned}
\end{equation}
for a.e. $t\in(0,2T)$, and the inequality~\eqref{eq:ph_cenin3} for every $t\in[0,2T]$. By contradiction assume the existence of a point $t_0\in[0,T]$ such that
\begin{align*}
&\mathcal K(\dot z(t_0))+\mathcal E(z(t_0),\sigma(t_0))-\frac{1}{2}\int_0^{t_0}(\dot b(\sigma(s))\dot \sigma(s)\mathbb CEz(s),Ez(s))_{L^2(\Omega)}\de s\\
&<\mathcal K(u^1)+\mathcal E(u^0, \sigma(0))+\mathcal W_{tot}(z,\sigma;0,t_0).
\end{align*}
Since $z\in C_w^0([0,T];H^1(\Omega;\mathbb R^d))$ and $\dot z\in C_w^0([0,T];L^2(\Omega;\mathbb R^d))$, we have that $z(t_0)-w(t_0)\in H^1_{D_1}(\Omega;\mathbb R^d)$ and $\dot z(t_0)\in L^2(\Omega;\mathbb R^d)$. Then we can consider the solution $z_0$ to~\eqref{eq:ph_weak2} in $[t_0,2T]$ with these initial conditions. The function defined by $z$ in $[0,t_0]$ and $z_0$ in $[t_0,2T]$ is still a solution to~\eqref{eq:ph_weak2} in $[0,2T]$ and so, by uniqueness, we have $z=z_0$ in $[t_0,2T]$. Furthermore, in view of~\eqref{eq:ph_cenin3} we deduce 
\begin{align*}
&\mathcal K(\dot z(t))+\mathcal E(z(t),\sigma(t))-\frac{1}{2}\int_{t_0}^t (\dot b(\sigma(s))\dot \sigma(s)\mathbb CEz(s),Ez(s))_{L^2(\Omega)}\de s\\
&\le\mathcal K(z(t_0))+\mathcal E(z(t_0), \sigma(t_0))+\mathcal W_{tot}(z,\sigma;t_0,t)
\end{align*}
for every $t\in[t_0,2T]$. By combining the last two inequalities, we get
\begin{align*}
&\mathcal K(\dot z(t))+\mathcal E(z(t),\sigma(t))-\frac{1}{2}\int_0^t (\dot b(\sigma(s))\dot \sigma(s)\mathbb CEz(s),Ez(s))_{L^2(\Omega)}\de s\\
&\le\mathcal K(z(t_0))+\mathcal E(z(t_0), \sigma(t_0))+\mathcal W_{tot}(z,\sigma;t_0,t)-\frac{1}{2}\int_0^{t_0}(\dot b(\sigma(s))\dot \sigma(s)\mathbb CEz(s),Ez(s))_{L^2(\Omega)}\de s\\
&<\mathcal K(u^1)+\mathcal E(u^0, \sigma(0))+\mathcal W_{tot}(z,\sigma;0,t_0)+\mathcal W_{tot}(z,\sigma;t_0,t)\\
&=\mathcal K(u^1)+\mathcal E(u^0, \sigma(0))+\mathcal W_{tot}(z,\sigma;0,t)
\end{align*}
for every $t\in[t_0,2T]$, which contradicts~\eqref{eq:ph_acen}. Therefore, the equality~\eqref{eq:ph_acen} holds for every $t\in[0,T]$, which implies the continuity of the map $t\mapsto\mathcal K(\dot z(t))+\mathcal E(z(t),\sigma(t))$ from $[0,T]$ to $\mathbb R$.

Let us now prove~\eqref{eq:ph_zreg}. We fix $t_0\in[0,T]$ and we consider a sequence of points $\{t_m\}_{m\in\mathbb N}$ converging to $t_0$ as $m\to\infty$. Since $z\in C_w^0([0,T];H^1(\Omega;\mathbb R^d))$ and $\dot z	\in C_w^0([0,T];L^2(\Omega;\mathbb R^d))$, we have
\begin{align*}
&\mathcal K(\dot z(t_0))\le \liminf_{m\to\infty}\mathcal K(\dot z(t_m)),\quad\mathcal E(z(t_0),\sigma(t_0))\le\liminf_{m\to\infty}\mathcal E(z(t_m),\sigma(t_0)).
\end{align*}
Moreover, $\sigma\in C^0([0,T];C^0(\overline\Omega))$ and $b\in C^1(\R)$, which implies as $m\to\infty$
\begin{align*}
&\left|\mathcal E(z(t_m),\sigma(t_0))-\mathcal E(z(t_m),\sigma(t_m))\right|\\
&\le \frac{1}{2}\dot b(\norm{\sigma}_{L^\infty(0,T;C^0(\overline\Omega))})\norm{\mathbb C}_{L^\infty(\Omega)}\norm{E z}^2_{L^\infty(0,T;L^2(\Omega))}\norm{ \sigma(t_0)-\sigma(t_m)}_{L^\infty(\Omega)}\to 0.
\end{align*}
In particular, we deduce
\begin{align*}
&\mathcal E(z(t_0),\sigma(t_0))\le\liminf_{m\to\infty}\mathcal E(z(t_m),\sigma(t_m)).
\end{align*}
The above inequalities and the continuity of $t\mapsto\mathcal K(\dot z(t))+\mathcal E(z(t),\sigma(t))$ gives
\begin{align*}
\mathcal K(\dot z(t_0))+\mathcal E(z(t_0),\sigma(t_0))&\le\liminf_{m\to\infty}\mathcal K(\dot z(t_m))+\liminf_{m\to\infty}\mathcal E(z(t_m),\sigma(t_m))\\
&\le\lim_{m\to\infty}[\mathcal K(\dot z(t_m))+\mathcal E(z(t_m),\sigma(t_m))]=\mathcal K(\dot z(t_0))+\mathcal E(z(t_0),\sigma(t_0)),
\end{align*}
which implies the continuity of $t\mapsto \mathcal K(\dot z(t))$ and $t\mapsto \mathcal E(z(t),\sigma(t))$ in $t_0\in[0,T]$. In particular, we derive that the functions $t\mapsto \norm{\dot z(t)}_{L^2(\Omega)}$ and $t\mapsto \norm{z(t)}_{H^1(\Omega)}$ are continuous from $[0,T]$ to $\mathbb R$. By combining this fact with the weak continuity of $\dot z$ and $z$, we get~\eqref{eq:ph_zreg}.
\end{proof}

We are now in a position to prove Theorem~\ref{thm:ph:main_res}.

\begin{proof}[Proof of Theorem~\ref{thm:ph:main_res}]
By Lemmas~\ref{lem:ph_lim1} and~\ref{lem:ph_lim2}, there exists a generalized solution $(u,v)$ to~\eqref{eq:ph_elsys}--\eqref{eq:ph_bc3} satisfying the initial conditions~\eqref{eq:ph_ic}, the irreversibility condition~\eqref{eq:ph_irr}, and the unilateral crack stability condition~\eqref{eq:ph_mincon}. Clearly, the function $v$ satisfies~\eqref{eq:ph_reg3}, since $k\ge 1$. Moreover, the function $v=\sigma$ is admissible in Lemmas~\ref{lem:ph_exis} and~\ref{lem:ph_seq}, since $H^k(\Omega)\hookrightarrow C^0(\overline\Omega)$. Therefore, by uniqueness we have that $u=z$ satisfies~\eqref{eq:ph_zreg}, which gives that $(u,v)$ is a weak solution to~\eqref{eq:ph_elsys}--\eqref{eq:ph_bc3}.

It remains to prove that $(u,v)$ satisfies the Griffith's dynamic energy--dissipation balance~\eqref{eq:ph_Genb}. As observed in Remark~\ref{rem:ph_varin}, for $k>d/2$ the crack stability condition~\eqref{eq:ph_mincon} is equivalent to the variational inequality~\eqref{eq:ph_varin} and the function $\dot v(t)\in H^k(\Omega)$ is admissible in~\eqref{eq:ph_varin} for a.e. $t\in(0,T)$. Therefore, we have 
\begin{equation*}
\partial_v\mathcal E(u(t),v(t))[\dot v(t)]+\partial\mathcal H(v(t))[\dot v(t)]+\mathcal G_k(\dot v(t))\ge 0\quad\text{for a.e. }t\in(0,T).
\end{equation*}
By integrating the above inequality in $[0,t_0]$ for every $t_0\in[0,T]$, we get 
\begin{equation}\label{eq:ph_j0}
\int_0^{t_0}\partial_v\mathcal E(u(t),v(t))[\dot v(t)]\de t+\mathcal H(v(t_0))-\mathcal H(v^0)+\int_0^{t_0}\mathcal G_k(\dot v(t))\de t\ge 0.
\end{equation}
Thanks to Lemma~\ref{lem:ph_seq}, for every $t_0\in[0,T]$ the pair $(u,v)$ satisfies the energy--dissipation balance 
\begin{equation}\label{eq:ph_j1}
\begin{aligned}
&\mathcal K(\dot u(t_0))+\mathcal E(u(t_0),v(t_0))-\frac{1}{2}\int_0^{t_0} (\dot b(v(t))\dot v(t)\mathbb CEu(t),Eu(t))_{L^2(\Omega)}\de t\\
&=\mathcal K(u^1)+\mathcal E(u^0, v^0)+\mathcal W_{tot}(u,v;0,t_0).
\end{aligned}
\end{equation}
Hence, by combining~\eqref{eq:ph_j0} and~\eqref{eq:ph_j1}, we deduce
\begin{align*}
&\mathcal F(u(t_0),\dot u(t_0),v(t_0))+\int_0^{t_0}\mathcal G_k(\dot v(t))\de t\ge\mathcal F(u^0,u^1,v^0)+\mathcal W_{tot}(u,v;0,t_0)
\end{align*}
for every $t_0\in[0,T]$. This inequality, together with~\eqref{eq:ph_enin}, implies~\eqref{eq:ph_Genb} and concludes the proof.
\end{proof}


\section{The case without dissipative terms}\label{sec:ph5}

We conclude the paper by analyzing the dynamic phase--field model of crack propagation without dissipative terms. Given $w_1$, $w_2$, $f$, $g$, $u^0$, $u^1$, and $v^0$ satisfying~\eqref{eq:ph_bd1}--\eqref{eq:ph_id} and
\begin{align}
&v^0\in\argmin\{\mathcal E(u^0,v^*)+\mathcal H(v^*):v^*-w_2\in H^1_{D_2}(\Omega),\,v^*\le v^0\text{ in $\Omega$}\},\label{eq:ph_id2}
\end{align}
we search a pair $(u,v)$ which solves the elastodynamics system~\eqref{eq:ph_elsys} with boundary and initial conditions~\eqref{eq:ph_bc1}--\eqref{eq:ph_ic}, the {\it irreversibility} condition~\eqref{eq:ph_irr}, and for every $t\in[0,T]$ the following {\it crack stability condition}
\begin{equation}\label{eq:ph_mincon2}
\mathcal E(u(t),v(t))+\mathcal H(v(t))\le \mathcal E(u(t),v^*)+\mathcal H(v^*)
\end{equation}
among all $v^*-w_2\in H^1_{D_2}(\Omega)$ with $v^*\le v(t)$. 

\begin{remark}
In the dynamic phase--field model $(\tilde D_1)$--$(\tilde D_3)$ we require the crack stability condition~\eqref{eq:ph_mincon} for a.e. $t\in(0,T)$ since it appears $\dot v$, which is defined only a.e. in $(0,T)$. In the case without dissipative terms we can instead construct a dynamic phase--field evolution which is defined pointwise everywhere in $[0,T]$ and satisfies~\eqref{eq:ph_mincon} for every $t\in[0,T]$. Notice that we need the compatibility conditions~\eqref{eq:ph_id2} for the initial data $(u^0,v^0)$, if we want that~\eqref{eq:ph_mincon2} is satisfied for $t=0$. Given $u^0\in H^1(\Omega;\R^d)$, an admissible $v^0$ can be constructed by minimizing $v^*\mapsto \mathcal E(u^0,v^*)+\mathcal H(v^*)$ among all $v^*-w_2\in H^1_{D_2}(\Omega)$ with $v^*\le 1$ in $\Omega$.
\end{remark}

In this section we consider the following notion of solution, which is a slightly modification of Definition~\ref{def:ph_gensol}. 

\begin{definition}
Let $w_1$, $w_2$, $f$, and $g$ be as in~\eqref{eq:ph_bd1} and~\eqref{eq:ph_bd2}. The pair $(u,v)$ is a {\it generalized solution} to~\eqref{eq:ph_elsys}--\eqref{eq:ph_bc3} if 
\begin{align}
&u\in L^\infty(0,T;H^1(\Omega;\mathbb R^d))\cap W^{1,\infty}(0,T;L^2(\Omega;\mathbb R^d))\cap H^2(0,T;H^{-1}_{D_1}(\Omega;\mathbb R^d)),\label{eq:ph_greg12}\\
&u(t)-w_1(t)\in H^1_{D_1}(\Omega;\mathbb R^d)\text{ for every }t\in[0,T],\\
&v\colon [0,T]\to H^1(\Omega)\text{ with }v\in L^\infty(0,T;H^1(\Omega)),\\
&v(t)-w_2\in H^1_{D_2}(\Omega)\text{ and }v(t)\le 1\text{ in }\Omega\text{ for every }t\in[0,T],\label{eq:ph_greg42}
\end{align}
and for a.e. $t\in(0,T)$ we have~\eqref{eq:ph_weak_form}.
\end{definition}

\begin{remark}
By exploiting~\eqref{eq:ph_greg12}, we deduce that $u\in C_w^0([0,T];H^1(\Omega;\R^d))$, while $\dot u\in C_w^0([0,T];L^2(\Omega;\R^d))$. Therefore, it makes sense to evaluate $u$ and $\dot u$ at time $0$. On the other hand, the function $v$ is defined pointwise for every $t\in[0,T]$, and in the initial condition~\eqref{eq:ph_ic} we consider its precise value at $0$.
\end{remark}

The main result of this section is the following theorem.

\begin{theorem}\label{thm:ph_mainres2}
Assume that $w_1$, $w_2$, $f$, $g$, $u^0$, $u^1$, and $v^0$ satisfy~\eqref{eq:ph_bd1}--\eqref{eq:ph_id} and~\eqref{eq:ph_id2}. Then there exists a generalized solution $(u,v)$ to problem~\eqref{eq:ph_elsys}--\eqref{eq:ph_bc3} satisfying the initial condition~\eqref{eq:ph_ic}, the irreversibility condition~\eqref{eq:ph_irr}, and the crack stability condition~\eqref{eq:ph_mincon2}. Moreover, the pair $(u,v)$ satisfies for every $t\in[0,T]$ the following energy--dissipation inequality
\begin{equation}\label{eq:ph_enin2}
\mathcal F(u(t),\dot u(t),v(t))\le\mathcal F(u^0,u^1,v^0)+\mathcal W_{tot}(u,v;0,t).
\end{equation}
Finally, if $w_2\ge 0$ on $\partial_{D_2}\Omega$, $v^0\ge 0$ in $\Omega$, and $b(s)=(\max\{s,0\})^2+\eta$ for $s\in\R$, then we can take $v(t)\ge 0$ in $\Omega$ for every $t\in [0,T]$.
\end{theorem}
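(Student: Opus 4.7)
The plan is to adapt the alternate time--discretization scheme of Section~\ref{sec:ph3} to the case without dissipative terms, so the absence of the $\mathcal G_k(\dot v)$ term in the scheme must be compensated by using a genuine \emph{global} unilateral minimization of $v$ at each step, together with a Francfort--Larsen type transfer of stability to the limit. Fix $n\in\mathbb N$ and the same nodes, data, and $u_n^j$ as in Section~\ref{sec:ph3}. Replace step $(ii)$ by: $v_n^j-w_2\in H^1_{D_2}(\Omega)$ with $v_n^j\le v_n^{j-1}$ is the minimizer of $v^*\mapsto\mathcal E(u_n^j,v^*)+\mathcal H(v^*)$ among every $v^*-w_2\in H^1_{D_2}(\Omega)$ with $v^*\le v_n^{j-1}$. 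Existence of the minimizer follows from the direct method (the functional is weakly lower semicontinuous and coercive on $H^1$), while $v_n^j\le 1$ follows by truncation as in Lemma~\ref{lem:ph_dvarin}. The compatibility condition~\eqref{eq:ph_id2} ensures that $v_n^0:=v^0$ is itself a minimizer at the initial step, which will propagate stability throughout the scheme.

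Next I would derive a discrete energy inequality by testing~\eqref{eq:ph_un2} against $\tau_n(\delta u_n^j-\delta w_n^j)$, splitting as in Lemma~\ref{lem:ph_denin}. The term $\tfrac12([b(v_n^{j-1})-b(v_n^j)]\mathbb C Eu_n^j,Eu_n^j)_{L^2(\Omega)}$ is now estimated from below using the global minimality of $v_n^j$ (testing with $v^*=v_n^{j-1}$), which gives
\begin{equation*}
\tfrac12([b(v_n^{j-1})-b(v_n^j)]\mathbb C Eu_n^j,Eu_n^j)_{L^2(\Omega)}\ge \mathcal H(v_n^j)-\mathcal H(v_n^{j-1}).
\end{equation*}
Summing over $j$ and using the telescopic identities~\eqref{eq:ph_gn} and~\eqref{eq:ph_wn} yields a discrete analogue of~\eqref{eq:ph_enin2} without the $\mathcal G_k$ term. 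The a priori estimate of Lemma~\ref{lem:ph_bound} then gives uniform bounds on $\norm{\delta u_n^j}_{L^2}$, $\norm{u_n^j}_{H^1}$, $\norm{v_n^j}_{H^1}$ and on $\norm{\delta^2u_n^j}_{H^{-1}_{D_1}}$ in the summed sense. For the convergence of $u_n$ to some $u$ satisfying~\eqref{eq:ph_greg12} one argues exactly as in Lemma~\ref{lem:ph_con} via Aubin--Lions. For $v_n$, since we no longer have a bound on $\dot v_n$, I would instead invoke a Helly--type selection theorem for monotone maps: the interpolants $t\mapsto v_n(t)$ are non--increasing in $t$ and uniformly bounded in $H^1(\Omega)$, so a diagonal subsequence converges pointwise everywhere in $[0,T]$ weakly in $H^1(\Omega)$ (and strongly in $L^2(\Omega)$ by Rellich) to a function $v$ satisfying~\eqref{eq:ph_irr} and $v(t)\le 1$ in $\Omega$.

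The passage to the limit in the elastodynamics equation proceeds as in Lemma~\ref{lem:ph_lim1}, using the pointwise $L^2$--convergence of $\underline v_n(t)$ to $v(t)$ together with dominated convergence. The main obstacle is the passage to the limit in the unilateral stability~\eqref{eq:ph_mincon2}, which must hold for \emph{every} $t\in[0,T]$ and with a global competitor $v^*\le v(t)$ rather than only via a variational inequality. The standard Francfort--Larsen device is to pick any admissible $v^*-w_2\in H^1_{D_2}(\Omega)$ with $v^*\le v(t)$ and to construct $v_n^*:=v^*\wedge \underline v_n(t)$, which is admissible in the discrete minimality at time $t$ (against the backward interpolant); one then shows $v_n^*\to v^*$ strongly in $H^1(\Omega)$ thanks to $\underline v_n(t)\to v(t)$ in $L^2(\Omega)$ and $v^*\le v(t)$ (so that $\underline v_n(t)\vee v^*\to v^*$), using the truncation stability of $\mathcal H$. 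Passing to the $\liminf$ via Ioffe--Olech for the elastic energy and continuity for the surface energy yields~\eqref{eq:ph_mincon2} at the time $t$ where $\underline v_n(t)\to v(t)$; the pointwise selection guarantees that this actually holds for every $t$.

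Finally, the energy--dissipation inequality~\eqref{eq:ph_enin2} is obtained by passing to the $\liminf$ in the discrete energy inequality at each $t$, using the pointwise weak convergence $v_n(t)\rightharpoonup v(t)$ in $H^1(\Omega)$, the Ioffe--Olech lower semicontinuity for $\mathcal E$, and the strong convergences of $\overline f_n$, $\dot g_n$, $\overline w_n'$, $\dot w_n'$ already used in Lemma~\ref{lem:ph_enin}; the work term $\mathcal W_{tot}(u,v;0,t)$ passes thanks to $\underline v_n(s)\to v(s)$ in $L^2$ a.e. together with dominated convergence. For the non--negativity statement when $b(s)=(\max\{s,0\})^2+\eta$, $w_2\ge 0$, $v^0\ge 0$, the argument of Lemma~\ref{lem:ph_dvarin} applies unchanged at each discrete step (since here $k$ is not involved and $(v_n^j)^+$ is always a competitor with no larger energy), and non--negativity is preserved in the pointwise limit.
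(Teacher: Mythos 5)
Your overall scheme mirrors the paper's proof very closely: the same alternate minimization (with the viscous term in $(ii)$ dropped), the same discrete energy inequality obtained by testing \eqref{eq:ph_un2} against $\tau_n[\delta u_n^j-\delta w_n^j]$ and using the global minimality of $v_n^j$ against $v_n^{j-1}$, the same a priori bounds, and crucially the same Helly--type selection for the monotone family $t\mapsto v_n(t)$ (the paper isolates this as Lemma~\ref{lem:ph_Helly}, obtaining for every $t\in[0,T]$ that $\overline v_n(t)\rightharpoonup v(t)$ in $H^1(\Omega)$ and $\to v(t)$ in $L^2(\Omega)$). The passage to the limit in the elastodynamic equation, the energy inequality via Ioffe--Olech lower semicontinuity, and the non--negativity of $v(t)$ from $(v_n^j)^+$ being a competitor all match the paper.

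There is, however, a genuine gap in your passage to the limit in the stability condition~\eqref{eq:ph_mincon2}. You propose a Francfort--Larsen style transfer: take $v_n^*:=v^*\wedge\underline v_n(t)$ as competitor and claim $v_n^*\to v^*$ \emph{strongly} in $H^1(\Omega)$. This is not justified and is false in general: since $\underline v_n(t)$ converges to $v(t)$ only \emph{weakly} in $H^1(\Omega)$ (strongly in $L^2(\Omega)$), one gets at best $\|(v^*-\underline v_n(t))^+\|_{L^2(\Omega)}\to 0$, while $\nabla[(v^*-\underline v_n(t))^+]=(\nabla v^*-\nabla\underline v_n(t))\mathds 1_{\{v^*>\underline v_n(t)\}}$ is only bounded, not small, in $L^2(\Omega)$. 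Your parenthetical ``so that $\underline v_n(t)\vee v^*\to v^*$'' is also incorrect (that limit is $v(t)$, not $v^*$). Moreover, even if the strong convergence held, plugging $v_n^*$ into the full minimality would require $\limsup_n\mathcal E(\overline u_n(t),v_n^*)\le \mathcal E(u(t),v^*)$, which fails because $E\overline u_n(t)$ converges only weakly in $L^2(\Omega;\R^{d\times d})$; and plugging it into the variational inequality~\eqref{eq:ph_varin2} leaves the weak--weak product $(\nabla\overline v_n(t),\nabla(v_n^*-\overline v_n(t)))_{L^2(\Omega)}$ that does not pass to the limit.

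The paper avoids all of this by choosing the competitor $\chi+\overline v_n(t)$ with a \emph{fixed} $\chi\in H^1_{D_2}(\Omega)$, $\chi\le 0$. This is admissible since $\chi+\overline v_n(t)\le\overline v_n(t)\le\underline v_n(t)$, and it removes both obstructions at once: the surface--energy term becomes $\partial\mathcal H(\overline v_n(t))[\chi]$, which is linear in $\overline v_n(t)$ with a fixed test $\chi$ (so weak $H^1$ convergence of $\overline v_n(t)$ suffices), and the elastic--energy difference $\mathcal E(\overline u_n(t),\chi+\overline v_n(t))-\mathcal E(\overline u_n(t),\overline v_n(t))$ can be written as $-\int_\Omega\phi(x,\overline v_n(t),E\overline u_n(t))\,\de x$ with $\phi(x,y,\xi):=\frac12[b(y)-b(\chi(x)+y)]\mathbb C(x)\xi^{sym}\cdot\xi^{sym}\ge 0$ (since $b$ is non--decreasing and $\chi\le 0$), to which Ioffe--Olech's theorem applies under $\overline v_n(t)\to v(t)$ in $L^2(\Omega)$ and $E\overline u_n(t)\rightharpoonup Eu(t)$ in $L^2(\Omega;\R^{d\times d})$. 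One then recovers~\eqref{eq:ph_mincon2} from the resulting limit inequality for all $\chi\le 0$ by the convexity of $v^*\mapsto\mathcal H(v^*)$. You should replace the truncation device with this additive--shift argument.
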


\begin{remark}
By choosing $b(s)=(\max\{s,0\})^2+\eta$ for $s\in\R$ we deduce the existence of a dynamic phase--field evolution $(u,v)$ satisfying $(D_1)$ and $(D_2)$, since we can take $v(t)\ge 0$ in $\Omega$ and 
\begin{equation*}
    \int_\Omega [(\max\{v(x),0\})^2+\eta]\mathbb C(x)Eu(x)\cdot Eu(x)\de x\le \int_\Omega [(v(x))^2+\eta]\mathbb C(x)Eu(x)\cdot Eu(x)\de x
\end{equation*}
for every $u\in H^1(\Omega;\R^d)$ and $v\in H^1(\Omega)$.
Without adding a dissipative term to the model, we are not able to show the dynamic energy--dissipation balance $(D_3)$. However, we can always select a solution $(u,v)$ which satisfies~\eqref{eq:ph_enin2} for every $t\in[0,T]$.
\end{remark}

To prove Theorem~\ref{thm:ph_mainres2} we perform a time discretization, as done in the previous sections. From now on we assume that $w_1$, $w_2$, $f$, $g$, $u^0$, $u^1$, and $v^0$ satisfy~\eqref{eq:ph_bd1}--\eqref{eq:ph_id} and~\eqref{eq:ph_id2}. We fix $n\in\mathbb N$ and for every $j=1,\dots,n$ we define inductively:
\begin{itemize}
\item[$(i)$] $u_n^j-w_n^j\in H^1_{D_1}(\Omega;\R^d)$ is the minimizer of
\begin{align*}
u^*\mapsto\frac{1}{2\tau_n^2}\left\Vert u^*-2u_n^{j-1}-u_n^{j-2}\right\Vert_{L^2(\Omega)}^2+\mathcal E(u^*,v_n^{j-1})-(f_n^j,u^*)_{L^2(\Omega)}-\spr{g_n^j}{u^*-w_n^j}_{H^{-1}_{D_1}(\Omega)}
\end{align*}
among every $u^*-w_n^j\in H^1_{D_1}(\Omega;\R^d)$;
\item[$(ii)$] $v_n^j-w_2\in H^1_{D_2}(\Omega)$ with $v_n^j\le v_n^{j-1}$ is the minimizer of
\begin{align*}
&v^*\mapsto\mathcal E(u_n^j,v^*)+\mathcal H(v^*)
\end{align*}
among every $v^*-w_2\in H^1_{D_2}(\Omega)$ with $v^*\le v_n^{j-1}$.
\end{itemize}

As observed before, for every $j=1,\dots,n$ there exists a unique pair $(u_n^j,v_n^j)\in H^1(\Omega;\mathbb R^d)\times H^1(\Omega)$ solution to problems $(i)$ and $(ii)$. Moreover, the function $u_n^j$ solves~\eqref{eq:ph_un2}, while the function $v_n^j$ satisfies 
\begin{equation}\label{eq:ph_varin2}
\mathcal E(u_n^j,v^*)-\mathcal E(u_n^j,v_n^j)+\partial \mathcal H(v_n^j)[v^*- v_n^j]\ge 0
\end{equation}
among all $v^*-w_2\in H^1_{D_2}(\Omega)$ with $v^*\le v_n^{j-1}$, by arguing as in Lemma~\ref{lem:ph_dvarin}. In particular, if $w_2\ge 0$ on $\partial_{D_2}\Omega$, $v^0\ge 0$ in $\Omega$, and $b(s)=(\max\{s,0\})^2+\eta$ for $s\in\R$, then for every $j=1,\dots,n$ we can use $(v_n^j)^+:=\max\{v_n^j,0\}\in H^1(\Omega)$ as a competitor in $(ii)$ to derive that $v_n^j=(v_n^j)^+\ge 0$ in $\Omega$.

\begin{lemma}\label{lem:ph_bound2}
The family $\{(u_n^j,v_n^j)\}_{j=1}^n$, solution to problems $(i)$ and $(ii)$, satisfies for $j=1,\dots,n$ the discrete energy inequality
\begin{align*}
\mathcal F(u^j_n, \delta u^j_n, v^j_n)+\sum_{l=1}^j \tau_n^2D_n^l&\le \mathcal F(u^0, u^1, v^0)+\sum_{l=1}^j \tau_n\left[(f_n^l,\delta u_n^l-\delta w_n^l)_{L^2(\Omega)}+(b(v_n^{l-1})\mathbb CEu_n^l,E\delta w_n^l)_{L^2(\Omega)}\right]\\
&\quad-\sum_{l=1}^j \tau_n\left[(\delta u_n^{l-1},\delta^2 w_n^l)_{L^2(\Omega)}-\spr{\delta g_n^l}{u_n^{l-1}-w^{i-1}_n}_{H^{-1}_{D_1}(\Omega)}\right]+(\delta u^j_n,\delta w^j_n)_{L^2(\Omega)}\\
&\quad+\spr{g^j_n}{u^j_n-w^j_n}_{H^{-1}_{D_1}(\Omega)}-(u^1,\dot w_1(0))_{L^2(\Omega)}-\spr{g(0)}{u^0-w_1(0)}_{H^{-1}_{D_1}(\Omega)}.
\end{align*}
In particular, there exists a constant $C>0$, independent of $n$, such that
\begin{equation}\label{eq:ph_est2}
\max_{j=1,\dots,n}\left[\norm{\delta u_n^j}_{L^2(\Omega)}+\norm{u_n^j}_{H^1(\Omega)}+\norm{ v_n^j}_{H^1(\Omega)}\right]+\sum_{j=1}^n\tau_n\norm{\delta^2u_n^j}^2_{H^{-1}_{D_1}(\Omega)}+\sum_{j=1}^n\tau_n^2 D_n^j\le C.
\end{equation}
\end{lemma}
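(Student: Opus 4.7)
The plan is to mirror almost verbatim the arguments of Lemma~\ref{lem:ph_denin}, Lemma~\ref{lem:ph_bound}, and Remark~\ref{rem:ph_usec}, noting that the only structural change with respect to Section~\ref{sec:ph3} is the absence of the $\mathcal G_k(\delta v_n^j)$ penalty in problem~$(ii)$, which simply suppresses the corresponding term in every inequality it produced. The discrete Euler--Lagrange equation~\eqref{eq:ph_un2} still holds unchanged, and the minimality of $v_n^j$ still gives the variational inequality~\eqref{eq:ph_varin2}, which is obtained by testing $(ii)$ with $v_n^j+s(v^*-v_n^j)$, letting $s\to 0^+$, and using the convexity of $b$ exactly as in the proof of Lemma~\ref{lem:ph_dvarin}.

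To derive the discrete energy inequality, I test~\eqref{eq:ph_un2} with $\psi=\tau_n[\delta u_n^j-\delta w_n^j]\in H^1_{D_1}(\Omega;\R^d)$ and apply the identity $|a|^2-a\cdot b=\tfrac12|a|^2-\tfrac12|b|^2+\tfrac12|a-b|^2$ to rewrite
\begin{align*}
\tau_n(\delta^2u_n^j,\delta u_n^j)_{L^2(\Omega)}&=\mathcal K(\delta u_n^j)-\mathcal K(\delta u_n^{j-1})+\tfrac{\tau_n^2}{2}\norm{\delta^2 u_n^j}_{L^2(\Omega)}^2,\\
\tau_n(b(v_n^{j-1})\mathbb C Eu_n^j,E\delta u_n^j)_{L^2(\Omega)}&=\mathcal E(u_n^j,v_n^j)-\mathcal E(u_n^{j-1},v_n^{j-1})+\tfrac{\tau_n^2}{2}(b(v_n^{j-1})\mathbb C E\delta u_n^j,E\delta u_n^j)_{L^2(\Omega)}\\
&\quad+\tfrac12([b(v_n^{j-1})-b(v_n^j)]\mathbb C Eu_n^j,Eu_n^j)_{L^2(\Omega)}.
\end{align*}
The crucial step is to estimate the last term from below by testing $(ii)$ with $v^*=v_n^{j-1}$, which is admissible; the resulting minimality relation reads $\mathcal E(u_n^j,v_n^j)+\mathcal H(v_n^j)\le \mathcal E(u_n^j,v_n^{j-1})+\mathcal H(v_n^{j-1})$ and yields
$$\tfrac{1}{2}([b(v_n^{j-1})-b(v_n^j)]\mathbb C Eu_n^j,Eu_n^j)_{L^2(\Omega)}=\mathcal E(u_n^j,v_n^{j-1})-\mathcal E(u_n^j,v_n^j)\ge \mathcal H(v_n^j)-\mathcal H(v_n^{j-1}).$$
Summing over $l=1,\dots,j$ and applying the discrete integration-by-parts identities~\eqref{eq:ph_gn} and~\eqref{eq:ph_wn} produces the stated discrete energy inequality (identical to~\eqref{eq:ph_denin} except that the dissipative sum $\sum_l\tau_n\mathcal G_k(\delta v_n^l)$ is absent).

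For the a priori estimate~\eqref{eq:ph_est2}, I would repeat the bootstrap of Lemma~\ref{lem:ph_bound}: bound the left-hand side of the discrete energy inequality from below through~\eqref{eq:ph_ell} and~\eqref{eq:ph_breg2}, and bound each summand on the right-hand side linearly in the quantities $L_n:=\max_{j}\norm{\delta u_n^j}_{L^2(\Omega)}$ and $M_n:=\max_{j}\norm{u_n^j}_{H^1(\Omega)}$, using~\eqref{eq:ph_bd1}, \eqref{eq:ph_bd2}, the monotonicity of $b$, and the inequality $v_n^{j-1}\le 1$. This yields an estimate of the form $(L_n+M_n)^2\le \tilde C_1(L_n+M_n)+\tilde C_2$ with constants independent of $n$, from which $L_n$ and $M_n$ are uniformly bounded. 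The $H^1$-bound on $v_n^j$ then follows from $\mathcal H(v_n^j)\le C$ since $\min\{1/(4\e),\e\}\norm{v_n^j-1}_{H^1(\Omega)}^2\le\mathcal H(v_n^j)$. Finally, the $H^{-1}_{D_1}$-bound on $\delta^2u_n^j$ is obtained by reading~\eqref{eq:ph_un2} as an identity in $H^{-1}_{D_1}(\Omega;\R^d)$ and taking the supremum over $\psi\in H^1_{D_1}(\Omega;\R^d)$ with $\norm{\psi}_{H^1(\Omega)}\le 1$, exactly as in Remark~\ref{rem:ph_usec}.

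No real obstacle is expected: the argument is a routine transcription of the Section~\ref{sec:ph3} computations, and the only subtle point, as in Lemma~\ref{lem:ph_denin}, is turning the variation of $b(v_n^{j})$ into control of the surface energy $\mathcal H$ via the comparison $v^*=v_n^{j-1}$ in problem~$(ii)$. Since no viscous penalty is present, this comparison produces directly the clean bound $\mathcal E(u_n^j,v_n^{j-1})-\mathcal E(u_n^j,v_n^j)\ge \mathcal H(v_n^j)-\mathcal H(v_n^{j-1})$, and the remainder of the proof is the same Gronwall-type estimate already used in Lemma~\ref{lem:ph_bound}.
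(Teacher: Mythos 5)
Your proposal mirrors the paper's route (Lemma~\ref{lem:ph_denin}, Lemma~\ref{lem:ph_bound}, Remark~\ref{rem:ph_usec} transcribed with the $\mathcal G_k$ terms dropped) almost exactly, and the a priori bootstrap at the end is correct. However, there is one point where you prove a genuinely weaker inequality than the one stated.

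The issue is in the ``crucial step.'' You test problem $(ii)$ directly with $v^*=v_n^{j-1}$ and use the \emph{minimality} relation, obtaining
\begin{equation*}
\mathcal E(u_n^j,v_n^{j-1})-\mathcal E(u_n^j,v_n^j)\ge \mathcal H(v_n^j)-\mathcal H(v_n^{j-1}).
\end{equation*}
This loses the quadratic correction of $\mathcal H$. Recall that $D_n^j$, as introduced in Lemma~\ref{lem:ph_denin}, contains the summands $\frac{1}{4\e}\norm{\delta v_n^j}_{L^2(\Omega)}^2+\e\norm{\nabla\delta v_n^j}_{L^2(\Omega)}^2$; Section~\ref{sec:ph5} does not redefine $D_n^j$, so the statement to be proved keeps these terms. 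They arise, as in the derivation of~\eqref{eq:ph_bb4}, only if one first bounds the left-hand side below by $\tau_n\partial\mathcal H(v_n^j)[\delta v_n^j]$ and then uses the quadratic identity to rewrite
\begin{equation*}
\tau_n\partial\mathcal H(v_n^j)[\delta v_n^j]=\mathcal H(v_n^j)-\mathcal H(v_n^{j-1})+\frac{\tau_n^2}{4\e}\norm{\delta v_n^j}_{L^2(\Omega)}^2+\e\tau_n^2\norm{\nabla\delta v_n^j}_{L^2(\Omega)}^2.
\end{equation*}
The correct intermediate bound therefore comes from the variational inequality~\eqref{eq:ph_varin2} (which you already recalled) evaluated at $v^*=v_n^{j-1}$, namely the analogue of~\eqref{eq:ph_steq} without the $\mathcal G_k$ term:
\begin{equation*}
\frac{\mathcal E(u_n^j,v_n^j)-\mathcal E(u_n^j,v_n^{j-1})}{\tau_n}+\partial\mathcal H(v_n^j)[\delta v_n^j]\le 0.
\end{equation*}
Your direct-minimality argument yields only the weaker discrete energy inequality in which $D_n^j$ is replaced by $\frac{1}{2}\norm{\delta^2u_n^j}_{L^2(\Omega)}^2+\frac{1}{2}(b(v_n^{j-1})\mathbb C E\delta u_n^j,E\delta u_n^j)_{L^2(\Omega)}$. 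This does not affect the subsequent a priori estimate, since the discarded terms are non-negative anyway, but strictly speaking it does not prove the lemma as stated; replace the direct minimality step by~\eqref{eq:ph_varin2} with $v^*=v_n^{j-1}$ and the rest goes through unchanged.
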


\begin{proof}
It is enough to proceed as in Lemmas~\ref{lem:ph_denin} and~\ref{lem:ph_bound}, and Remark~\ref{rem:ph_usec}.
\end{proof}

As done in Section~\ref{sec:ph3}, we use the family $\{(u_n^j,v_n^j)\}_{j=1}^n$ and the estimate~\eqref{eq:ph_est2} to construct a generalized solution $(u,v)$ to~\eqref{eq:ph_elsys}--\eqref{eq:ph_bc3}. Let $u_n$, $u_n'$, $\overline u_n$, $\overline u_n'$, $\underline u_n$ and $\underline u_n'$ be, respectively, the piecewise affine, the backward, and the forward interpolants of $\{u_n^j\}_{j=1}^n$ and $\{\delta u_n^j\}_{j=1}^n$. Moreover, we consider the backward and the forward interpolants $\overline v_n$ and $\underline v_n$ of $\{v_n^j\}_{j=1}^n$, respectively.

Before passing to the limit as $n\to\infty$, we recall the following Helly's type result for vector--valued functions.

\begin{lemma}\label{lem:ph_Helly}
Let $[a,b]\subset\R$ and let $\varphi_m\colon[a,b]\to L^2(\Omega)$, $m\in\mathbb N$, be a sequence of functions satisfying
\begin{equation*}
\varphi_m(s)\le \varphi_m(t)\quad\text{in $\Omega$}\quad\text{for every $a\le s\le t\le b$ and $m\in\mathbb N$}.
\end{equation*}
Assume there exists a constant $C$, independent of $m$, such that
\begin{equation*}
\norm{\varphi_m(t)}_{L^2(\Omega)}\le C\quad\text{for every $t\in[a,b]$ and $m\in\mathbb N$}.
\end{equation*}
Then there is a subsequence of $m$, not relabeled, and a function $\varphi\colon[a,b]\to L^2(\Omega)$ such that for every $t\in[a,b]$
\begin{equation*}
\varphi_m(t)\rightharpoonup \varphi(t)\quad\text{in $L^2(\Omega)$}\quad\text{as $m\to\infty$}.
\end{equation*}
Moreover, we have $\norm{\varphi(t)}_{L^2(\Omega)}\le C$ for every $t\in[a,b]$ and
\begin{equation}\label{eq:ph_irr2}
\varphi(s)\le \varphi(t)\quad\text{in $\Omega$}\quad\text{for every $a\le s\le t\le b$}.
\end{equation}
\end{lemma}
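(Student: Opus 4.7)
The plan is to carry out a two-step vector-valued Helly selection. First I would fix a countable dense set $D\subseteq[a,b]$ containing $a$ and $b$. For each $t\in D$, the uniform bound and the reflexivity of $L^2(\Omega)$ give weak sequential compactness of $\{\varphi_m(t)\}_m$, and a Cantor diagonal extraction produces a subsequence (not relabeled) along which $\varphi_m(t)\rightharpoonup\varphi(t)$ in $L^2(\Omega)$ for every $t\in D$. The cone $\{h\in L^2(\Omega):h\ge 0\text{ in }\Omega\}$ is convex and strongly closed, hence weakly closed, so the inequality $\varphi_m(s)\le\varphi_m(t)$ passes to the limit, giving $\varphi(s)\le\varphi(t)$ in $\Omega$ for every $s\le t$ in $D$; weak lower semicontinuity of the norm yields $\norm{\varphi(t)}_{L^2(\Omega)}\le C$ on $D$.

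Next I would extend $\varphi$ to $[a,b]$ by one-sided limits. For $t\in(a,b]$ choose any sequence $D\ni s_n\uparrow t$; the functions $\varphi(s_n)$ are pointwise monotone non-decreasing in $n$ and sandwiched between $\varphi(a)$ and $\varphi(b)$, both in $L^2(\Omega)$, so they converge pointwise a.e.\ to a measurable function $\varphi_-(t)$. Since $|\varphi(s_n)|\le|\varphi(a)|+|\varphi(b)|\in L^2(\Omega)$, dominated convergence upgrades this to strong $L^2$ convergence $\varphi(s_n)\to\varphi_-(t)$, and monotonicity ensures the limit is independent of the chosen sequence. An analogous construction with $D\ni r_n\downarrow t$ produces $\varphi_+(t)\in L^2(\Omega)$, and $\varphi_-(t)\le\varphi_+(t)$ in $\Omega$.

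To single out the jumps I would introduce the scalar non-decreasing function $\theta(t):=\int_\Omega\varphi(t)\,\de x$ on $D$, whose one-sided extensions to $[a,b]$ are $\theta_\pm(t)=\int_\Omega\varphi_\pm(t)\,\de x$ by the strong convergence above. Being real and monotone, $\theta$ has at most countably many discontinuities, so the set $J:=\{t\in[a,b]:\theta_-(t)\neq\theta_+(t)\}$ is countable; for $t\notin J$ the pointwise inequality $\varphi_+(t)-\varphi_-(t)\ge 0$ combined with $\int_\Omega(\varphi_+(t)-\varphi_-(t))\,\de x=0$ forces $\varphi_-(t)=\varphi_+(t)$ in $\Omega$. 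A second diagonal extraction on the countable set $J$ yields a further subsequence along which $\varphi_m(t)\rightharpoonup\varphi(t)$ in $L^2(\Omega)$ for every $t\in J$, with $\varphi(t)$ defined as that weak limit. For $t\in[a,b]\setminus(D\cup J)$ I would bracket any weak cluster point $\psi$ of $\{\varphi_m(t)\}$ via $\varphi_m(s)\le\varphi_m(t)\le\varphi_m(r)$ for $s,r\in D$ with $s<t<r$, pass to the weak $L^2$ limit, and then let $s\uparrow t$, $r\downarrow t$ in $D$ using the strong convergences from the previous paragraph: this gives $\varphi_-(t)\le\psi\le\varphi_+(t)=\varphi_-(t)$, so $\psi=\varphi_-(t)$ is the unique cluster point and the whole sequence converges to $\varphi(t):=\varphi_-(t)$.

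Monotonicity~\eqref{eq:ph_irr2} on the full interval then follows by inserting any $r\in D$ strictly between $s<t$ in $[a,b]$ and weakly passing $\varphi_m(s)\le\varphi_m(r)\le\varphi_m(t)$ to the limit, while $\norm{\varphi(t)}_{L^2(\Omega)}\le C$ propagates to every $t$ by weak lower semicontinuity. The main obstacle is the absence of a single compactness argument that would give weak convergence simultaneously at every $t\in[a,b]$: the countable jump set $J$ must be identified before the second extraction, and only the monotone structure (via the real Helly theorem applied to $\theta$ and the sandwich squeeze at continuity points) lets one reconcile the uncountably many potential weak limits into one common subsequence.
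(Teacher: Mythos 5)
Your proof is correct but takes a genuinely different route from the paper's. You diagonalize on a countable dense set $D$ of \emph{times}, locate the countable jump set $J$ of the scalar monotone trace $\theta(t)=\int_\Omega\varphi(t)\,\de x$, diagonalize a second time on $J$, and squeeze each remaining point of $[a,b]$ by a one-sided sandwich built from the strong $L^2$ one-sided limits $\varphi_\pm(t)$ (obtained via monotone plus dominated convergence). The paper instead dualizes: it fixes a countable dense set $\mathscr D$ of non-negative \emph{test functions} $\chi\in L^2(\Omega)$, applies the scalar Helly selection theorem to the monotone, uniformly bounded real maps $t\mapsto\int_\Omega\varphi_m(t)\chi\,\de x$ to extract a single subsequence converging at every $t$ for every $\chi\in\mathscr D$, upgrades to all $\chi\in L^2(\Omega)$ by a Cauchy/density argument and linearity, and then recovers $\varphi(t)$ via the Riesz representation theorem. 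Your argument is the classical vector-valued Helly selection by diagonalization on times and yields, as a byproduct, strong one-sided $L^2$ limits and strong one-sided continuity off a countable set -- extra regularity the lemma does not require. The paper's duality argument is shorter because the single extraction over $\mathscr D$ already delivers weak convergence simultaneously at every $t\in[a,b]$, so there is no jump set to identify and no second extraction. Both are valid; the two proofs are essentially dual (countably many test times versus countably many test functionals), and both ultimately exploit the monotone structure, you through countability of the jump set of $\theta$, the paper through scalar Helly applied to each $\chi\in\mathscr D$.
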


\begin{proof}
Let us consider a countable dense set $\mathscr D\subset \{\chi\in L^2(\Omega):\chi\ge 0\}$ and let us fix $\chi\in \mathscr D$. For every $m\in\mathbb N$ the map $t\mapsto \int_\Omega \varphi_m(t,x)\chi(x)\de x$
is non decreasing and uniformly bounded in $[a,b]$, since
\begin{equation}\label{eq:ph_varphiest}
\left|\int_\Omega \varphi_m(t,x)\chi(x)\de x\right|\le C\norm{\chi}_{L^2(\Omega)}\quad\text{for every $t\in[a,b]$}.
\end{equation}
By applying Helly's theorem, we can find a subsequence of $m$, not relabeled, and a function $a_\chi\colon [a,b]\to \R$ such that for every $t\in[a,b]$
\begin{equation*}
\int_\Omega \varphi_m(t,x)\chi(x)\de x\to a_\chi(t)\quad\text{as } m\to\infty.
\end{equation*}
Moreover, thanks to a diagonal argument, the subsequence of $m$ can be chosen independent of $\chi\in\mathcal D$.

We now fix $\chi\in L^2(\Omega)$ with $\chi\ge 0$ and $t\in[a,b]$. Given $h>0$, there is $\chi_h\in\mathscr D$ such that $\norm{\chi-\chi_h}_{L^2(\Omega)}<h$ and, thanks to the previous convergence, we can find $\bar m\in\mathbb N$ such that for every $m,l>\bar m$
\begin{equation*}
\left|\int_\Omega (\varphi_m(t,x)-\varphi_j(t,x))\chi_h(x)\de x\right|<h.
\end{equation*}
Therefore, the sequence $\int_\Omega \varphi_m(t,x)\chi(x)\de x$, $m\in\mathbb N$, is Cauchy in $\R$. Indeed, for every $h>0$ there exists $\bar m\in\mathbb N$ such that for every $m,l>\bar m$
\begin{align*}
\left|\int_\Omega \varphi_m(t,x)\chi(x)\de x-\int_\Omega \varphi_l(t,x)\chi(x)\de x\right|&\le 2C\norm{\chi-\chi_h}_{L^2(\Omega)}+\left|\int_\Omega (\varphi_m(t,x)-\varphi_l(t,x))\chi_h(x)\de x\right|\\
&<(2C+1)h.
\end{align*}
Hence, we can find an element $a_\chi(t)\in\R$ such that 
\begin{equation*}
\int_\Omega \varphi_m(t,x)\chi(x)\de x\to a_\chi(t)\quad\text{as $m\to\infty$}.
\end{equation*}
In particular, for every $t\in[a,b]$ and $\chi\in L^2(\Omega)$ we have 
\begin{align*}
\int_\Omega \varphi_m(t,x)\chi(x)\de x&=\int_\Omega \varphi_m(t,x)\chi^+(x)\de x-\int_\Omega \varphi_m(t,x)\chi^-(x)\de x\to a_{\chi^+}(t)-a_{\chi^-}(t)=:a_\chi(t)\quad\text{as $m\to\infty$},
\end{align*}
where we have set $\chi^+:=\max\{\chi,0\}$ and $\chi^-:=\max\{-\chi,0\}$. For every $t\in[a,b]$ fixed, let us consider the functional $\zeta(t)\colon L^2(\Omega)\to \R$ defined by 
$$\zeta(t)(\chi):=a_\chi(t)\quad\text{for $\chi\in L^2(\Omega)$}.$$
We have that $\zeta(t)$ linear and continuous on $L^2(\Omega)$. Indeed, by~\eqref{eq:ph_varphiest} we deduce
\begin{equation*}
|\zeta(t)(\chi)|\le C\norm{\chi}_{L^2(\Omega)}\quad\text{for every $\chi\in L^2(\Omega)$}.
\end{equation*}
Hence, Riesz's representation theorem implies the existence of a function $\varphi(t)\in L^2(\Omega)$ such that 
\begin{equation*}
a_\chi(t)=\int_\Omega \varphi(t,x)\chi(x)\de x\quad\text{for every $\chi\in L^2(\Omega)$}.
\end{equation*}
In particular, for every $t\in[a,b]$ we have $\varphi_m(t)\rightharpoonup \varphi(t)$ in $L^2(\Omega)$ as $m\to\infty$ and $\norm{\varphi(t)}_{L^2(\Omega)}\le C$. Finally observe that $\{\chi\in L^2(\Omega):\chi\ge 0\}$ is a weakly closed subset of $L^2(\Omega)$. Therefore, we derive~\eqref{eq:ph_irr2}, since $\varphi_m(t)-\varphi_m(s)\rightharpoonup \varphi(t)-\varphi(s)$ in $L^2(\Omega)$ as $m\to \infty$ and $\varphi_m(t)-\varphi_m(s)\in \{\chi\in L^2(\Omega):\chi\ge 0\}$ for every $m\in\mathbb N$ and $a\le s\le t\le b$.
\end{proof}

\begin{lemma}\label{lem:ph_con2}
There exist a subsequence of $n$, not relabeled, and two functions
\begin{align*}
&u\in L^\infty(0,T;H^1(\Omega;\R^d))\cap W^{1,\infty}(0,T;L^2(\Omega;\R^d))\cap H^2(0,T;H^{-1}_{D_1}(\Omega;\R^d)),\\
&v\colon [0,T]\to H^1(\Omega)\text{ with }v\in L^\infty(0,T;H^1(\Omega)),
\end{align*}
such that as $n\to\infty$
\begin{align*}
&u_n\rightharpoonup u\quad\text{in }H^1(0,T;L^2(\Omega;\mathbb R^d)),& & u_n'\rightharpoonup \dot u\quad\text{in }H^1(0,T;H^{-1}_{D_1}(\Omega;\mathbb R^d)),\\
&u_n\to u\quad\text{in }C^0([0,T];L^2(\Omega;\mathbb R^d)),& & u_n'\to \dot u\quad\text{in }C^0([0,T];H^{-1}_{D_1}(\Omega;\mathbb R^d)),\\
&\overline u_n,\underline u_n\rightharpoonup u\quad\text{in }L^2(0,T;H^1(\Omega;\mathbb R^d)),& & \overline u_n',\underline u_n'\rightharpoonup \dot u\quad\text{in }L^2(0,T;L^2(\Omega;\mathbb R^d)),\\
&\overline v_n,\underline v_n\rightarrow v\quad\text{in }L^2(0,T;L^2(\Omega)),& & \overline v_n,\underline v_n\rightharpoonup v\quad\text{in }L^2(0,T;H^1(\Omega)).
\end{align*}
Moreover, for every $t\in [0,T]$ as $n\to\infty$ we have
\begin{align*}
&\overline v_n(t)\rightarrow v(t)\quad\text{in }L^2(\Omega),\quad \overline v_n(t)\rightharpoonup v(t)\quad\text{in }H^1(\Omega).
\end{align*}
\end{lemma}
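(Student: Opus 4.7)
For the displacement I would argue exactly as in Lemma~\ref{lem:ph_con}: by~\eqref{eq:ph_est2} the sequence $\{u_n\}$ is uniformly bounded in $L^\infty(0,T;H^1(\Omega;\R^d)) \cap W^{1,\infty}(0,T;L^2(\Omega;\R^d))$, and $\{u_n'\}$ in $L^\infty(0,T;L^2(\Omega;\R^d)) \cap H^1(0,T;H^{-1}_{D_1}(\Omega;\R^d))$. Aubin--Lions' lemma, a diagonal extraction, and the comparisons between the piecewise--affine and the piecewise--constant interpolants (analogous to~\eqref{eq:ph_pun}--\eqref{eq:ph_pdotun}) yield along a single subsequence a limit $u$ with the required regularity and all the listed convergences for $u_n$, $\overline u_n$, $\underline u_n$, $u_n'$, $\overline u_n'$, $\underline u_n'$. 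No new idea is needed here.

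The genuinely new difficulty is that~\eqref{eq:ph_est2} provides \emph{no} bound on a discrete time derivative of $v_n$, so Aubin--Lions is unavailable for the phase field. I would instead rely on monotonicity: the irreversibility $v_n^j \le v_n^{j-1}$ forces $t \mapsto \overline v_n(t,x)$ to be non-increasing for a.e.\ $x \in \Omega$, so $t \mapsto 1 - \overline v_n(t)$ meets the assumptions of Lemma~\ref{lem:ph_Helly}. Extracting a (not relabeled) subsequence yields a function $v \colon [0,T]\to L^2(\Omega)$ such that $\overline v_n(t) \rightharpoonup v(t)$ in $L^2(\Omega)$ for every $t\in[0,T]$. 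The uniform $H^1(\Omega)$-bound from~\eqref{eq:ph_est2}, together with a standard Urysohn-subsequence argument, promotes this to $\overline v_n(t) \rightharpoonup v(t)$ in $H^1(\Omega)$ for every $t$; by the compact embedding $H^1(\Omega)\hookrightarrow L^2(\Omega)$ the convergence is strong in $L^2(\Omega)$ pointwise in time. Dominated convergence then gives $\overline v_n \to v$ in $L^2(0,T;L^2(\Omega))$, and weak compactness in $L^2(0,T;H^1(\Omega))$ combined with uniqueness of the weak limit yields $\overline v_n \rightharpoonup v$ in $L^2(0,T;H^1(\Omega))$, together with $v \in L^\infty(0,T;H^1(\Omega))$.

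To transfer these convergences to $\underline v_n$, I would use the elementary inequality $\sum_j a_j^2 \le \bigl(\sum_j a_j\bigr)^2$, valid for any sequence of non-negative numbers $a_j$. Applied to $a_j(x) := v_n^{j-1}(x) - v_n^j(x) \ge 0$, whose sum telescopes to $v^0(x) - v_n^n(x)$, it gives
$$
\|\underline v_n - \overline v_n\|_{L^2(0,T;L^2(\Omega))}^2 = \tau_n \sum_{j=1}^n \|v_n^{j-1} - v_n^j\|_{L^2(\Omega)}^2 \le \tau_n \|v^0 - v_n^n\|_{L^2(\Omega)}^2 \le C\tau_n \to 0.
$$
Hence $\underline v_n \to v$ in $L^2(0,T;L^2(\Omega))$, and the $L^\infty(0,T;H^1)$-bound then forces $\underline v_n \rightharpoonup v$ in $L^2(0,T;H^1(\Omega))$ by uniqueness of the limit. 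The core obstacle of the whole argument is precisely the missing time-derivative estimate on $v_n$: all compactness in time for the phase field must be extracted from the monotonicity structure via Helly's principle rather than from Aubin--Lions.
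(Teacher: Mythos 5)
Your proof is correct and follows essentially the same route as the paper: Aubin--Lions for the displacement, and Helly's selection principle (via Lemma~\ref{lem:ph_Helly}) plus the uniform $H^1$ bound for the monotone phase field. The only real deviation is your argument transferring the convergence from $\overline v_n$ to $\underline v_n$: the paper simply observes that $\underline v_n(\cdot)=\overline v_n(\cdot-\tau_n)$ and uses strong continuity of translations in $L^2(0,T;L^2(\Omega))$, whereas you prove directly that $\|\underline v_n-\overline v_n\|^2_{L^2(0,T;L^2(\Omega))}=\tau_n\sum_j\|v_n^{j-1}-v_n^j\|^2_{L^2(\Omega)}\le\tau_n\|v^0-v_n^n\|^2_{L^2(\Omega)}\to 0$ using $\sum_j a_j^2\le(\sum_j a_j)^2$ for $a_j\ge 0$ and telescoping, which is a nice self-contained alternative. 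One small point you gloss over that the paper addresses explicitly: before you can use dominated convergence to pass from the pointwise-in-$t$ convergence $\overline v_n(t)\to v(t)$ to convergence in $L^2(0,T;L^2(\Omega))$, you need $v\colon[0,T]\to H^1(\Omega)$ to be (strongly) measurable; the paper notes this follows from weak measurability, which holds because $v$ is monotone in time with values in a separable Hilbert space, plus Pettis's theorem. It is worth making this explicit since the Bochner integral $\int_0^T\|\overline v_n(t)-v(t)\|^2_{L^2(\Omega)}\de t$ is not even defined without it.
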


\begin{proof}
The existence of a limit point $u$ and the related convergences can be obtained by arguing as in Lemma~\ref{lem:ph_con}. Let us now consider the sequence $\{\overline v_n\}_{n\in\mathbb N}$. For every $n\in\mathbb N$ the functions $\overline v_n\colon [0,T]\to L^2(\Omega)$ are non increasing in $[0,T]$, that is
\begin{equation*}
\overline v_n(t)\le \overline v_n(s)\quad\text{in $\Omega$}\quad\text{for every $0\le s\le t\le T$},
\end{equation*}
and, in view of Lemma~\ref{lem:ph_bound2}, there exists $C>0$, independent of $n$, such that for every $t\in[0,T]$ and $n\in\mathbb N$
\begin{equation}\label{eq:ph_vnest}
\norm{\overline v_n(t)}_{H^1(\Omega)}\le C.
\end{equation}
Therefore, we can apply Lemma~\ref{lem:ph_Helly}. Up to extract a subsequence (not relabeled), we obtain the existence of a non increasing function $v\colon [0,T]\to L^2(\Omega)$ such that for every $t\in[0,T]$ as $n\to\infty$
\begin{align*}
\overline v_n(t)\rightharpoonup v(t)\quad\text{in }L^2(\Omega).
\end{align*}
Moreover, by~\eqref{eq:ph_vnest} for every $t\in[0,T]$ we derive that $v(t)\in H^1(\Omega)$ and as $n\to\infty$
\begin{align*}
&\overline v_n(t)\rightharpoonup v(t)\quad\text{in }H^1(\Omega),\quad\overline v_n(t)\rightarrow v(t)\quad\text{in }L^2(\Omega),
\end{align*}
thanks to Rellich's theorem. Notice that the function $v\colon [0,T]\to H^1(\Omega)$ is strongly measurable. Indeed, it is weak measurable, since it is non increasing, and with values in a separable Hilbert space. In particular, we have $v\in L^\infty(0,T;H^1(\Omega))$, since $\norm{v(t)}_{H^1(\Omega)}\le C$ for every $t\in[0,T]$. By the dominated convergence theorem, as $n\to\infty$ we conclude 
\begin{align*}
&\overline v_n\to v\quad\text{in }L^2(0,T;L^2(\Omega)),\quad \overline v_n\rightharpoonup v\quad\text{in }L^2(0,T;H^1(\Omega)).
\end{align*}
Finally, as $n\to\infty$ we have
\begin{align*}
&\underline v_n\to v\quad\text{in }L^2(0,T;L^2(\Omega)),\quad  \underline v_n\rightharpoonup v\quad\text{in }L^2(0,T;H^1(\Omega)),
\end{align*}
since $\underline v_n(t)=\overline v_n(t-\tau_n)$ for a.e. $t\in(\tau_n,T)$.
\end{proof}

\begin{remark}
As pointed out in Remark~\ref{rem:ph_pointcon}, for every $t\in[0,T]$ we have as $n\to\infty$
\begin{align*}
&\overline u_n(t)\rightharpoonup u(t)\quad\text{in }H^1(\Omega;\R^d),\quad \overline u_n'(t)\rightharpoonup \dot u(t)\quad\text{in }L^2(\Omega;\R^d),\\
&\underline u_n(t)\rightharpoonup u(t)\quad\text{in }H^1(\Omega;\R^d),\quad \underline u_n'(t)\rightharpoonup \dot u(t)\quad\text{in }L^2(\Omega;\R^d).
\end{align*}
\end{remark}

We are now in position to prove Theorem~\ref{thm:ph_mainres2}.

\begin{proof}[Proof of Theorem~\ref{thm:ph_mainres2}]
Thanks to the previous lemma there exists a pair $(u,v)$ satisfying~\eqref{eq:ph_greg12}--\eqref{eq:ph_greg42}, since $u_n(t)-w_n(t)\in H^1_{D_1}(\Omega;\R^d)$ and $\overline v_n(t)-w_2\in H^1_{D_2}(\Omega)$ for every $t\in[0,T]$ and $n\in\mathbb N$. Moreover, $(u,v)$ satisfies the irreversibility condition~\eqref{eq:ph_irr} and the initial conditions~\eqref{eq:ph_ic}, thanks to~\eqref{eq:ph_irr2} and $u^0=u_n(0)\rightharpoonup u(0)$ in $H^1(\Omega;\mathbb R^d)$, $u^1=u_n'(0)\rightharpoonup \dot u(0)$ in $L^2(\Omega;\mathbb R^d)$, and $v^0=\overline v_n(0)\rightharpoonup  v(0)$ in $H^1(\Omega)$ as $n\to\infty$.

For every $n\in\mathbb N$ and $j=1,\dots,n$ the pair $(u_n^j,v_n^j)$ solves the equation~\eqref{eq:ph_un2}. In particular, by integrating it over the time interval $[t_1,t_2]\subseteq [0,T]$, for every $\psi\in H^1_{D_1}(\Omega;\mathbb R^d)$ we deduce
\begin{equation*}
\int_{t_1}^{t_2}\spr{\dot u_n'(t)}{\psi}_{H^{-1}_{D_1}(\Omega)}\de t+\int_{t_1}^{t_2}(b(\underline v_n(t))\mathbb C E\overline u_n(t),E\psi)_{L^2(\Omega)}\de t=\int_{t_1}^{t_2}(\overline f_n(t),\psi)_{L^2(\Omega)}\de t+\int_{t_1}^{t_2}\spr{\overline g_n(t)}{\psi}_{H^{-1}_{D_1}(\Omega)}\de t.
\end{equation*}
Let us use the convergences of Lemma~\ref{lem:ph_con2} to pass to the limit as $n\to\infty$. We have 
\begin{align*}
&\lim_{n\to\infty}\int_{t_1}^{t_2}\spr{\dot u_n'(t)}{\psi}_{H^{-1}_{D_1}(\Omega)}\de t=\int_{t_1}^{t_2}\spr{\ddot u(t)}{\psi}_{H^{-1}_{D_1}(\Omega)}\de t,\\
&\lim_{n\to\infty}\int_{t_1}^{t_2}(\overline f_n(t),\psi)_{L^2(\Omega)}\de t=\int_{t_1}^{t_2}(f(t),\psi)_{L^2(\Omega)}\de t,\\
&\lim_{n\to\infty}\int_{t_1}^{t_2}\spr{\overline g_n(t)}{\psi}_{H^{-1}_{D_1}(\Omega)}\de t=\int_{t_1}^{t_2}\spr{ g(t)}{\psi}_{H^{-1}_{D_1}(\Omega)}\de t,
\end{align*}
since $\dot u_n'\rightharpoonup\ddot u$ and $\overline g_n\to g$ in $L^2(0,T;H^{-1}_{D_1}(\Omega;\mathbb R^d))$, and $f_n\to f$ in $L^2(0,T;L^2(\Omega;\mathbb R^d))$ as $n\to\infty$. Moreover, the dominated convergence theorem yields $b(\underline v_n)\mathbb C E\psi\to b(v)\mathbb C E\psi$ in $L^2(0,T;L^2(\Omega;\mathbb R^{d\times d}))$ as $n\to\infty$, being
$$\left|b(\underline v_n(t,x))\mathbb C(x)E\psi(x)\right|\le b(1)\norm{\mathbb C}_{L^\infty(\Omega)}|E\psi(x)|\quad\text{for every $t\in[0,T]$ and a.e. $x\in\Omega$},$$ 
and $\underline v_n\to v$ in $L^2(0,T;L^2(\Omega))$. Therefore, we derive
\begin{align*}
\lim_{n\to\infty}\int_{t_1}^{t_2}(b(\underline v_n(t))\mathbb CE\overline u_n(t),E\psi)_{L^2(\Omega)}\de t=\int_{t_1}^{t_2}(b(v(t))\mathbb C Eu(t),E\psi)_{L^2(\Omega)}\de t,
\end{align*}
because $E\overline u_n\rightharpoonup Eu$ in $L^2(0,T;L^2(\Omega;\R^{d\times d}))$ as $n\to\infty$.
These facts imply that the pair $(u,v)$ solves
\begin{equation*}
\int_{t_1}^{t_2}\spr{\ddot u(t)}{\psi}_{H^{-1}_{D_1}(\Omega)}\de t+\int_{t_1}^{t_2}(b(v(t))\mathbb C Eu(t),E\psi)_{L^2(\Omega)}\de t=\int_{t_1}^{t_2}(f(t),\psi)_{L^2(\Omega)}\de t+\int_{t_1}^{t_2}\spr{ g(t)}{\psi}_{H^{-1}_{D_1}(\Omega)}\de t
\end{equation*}
for every $\psi\in H^1_{D_1}(\Omega;\mathbb R^d)$ and $[t_1,t_2]\subseteq[0,T]$. By Lebesgue's differentiation theorem and a density argument we hence obtain~\eqref{eq:ph_weak_form} for a.e. $t\in(0,T)$. 

For $t=0$ the crack stability condition~\eqref{eq:ph_mincon2} is trivially true, since $(u,v)$ satisfies the initial conditions~\eqref{eq:ph_ic} and the compatibility condition~\eqref{eq:ph_id2}. We fix $t\in(0,T]$ and, by the variational inequality~\eqref{eq:ph_varin2}, we derive
\begin{equation}\label{eq:ph_d2}
\mathcal E(\overline u_n(t),v^*)-\mathcal E(\overline u_n(t),\overline v_n(t))+\partial\mathcal H(\overline v_n(t))[v^*-\overline v_n(t)]\ge 0
\end{equation}
among all $v^*-w_2\in H^1_{D_2}(\Omega)$ with $v^*\le \overline v_n(t-\tau_n)$.  Given $\chi\in H^1_{D_2}(\Omega)$, with $\chi\le 0$ in $\Omega$, the function $\chi+\overline v_n(t)$ is admissible for~\eqref{eq:ph_d2}. Hence, we have
\begin{equation*}
\mathcal E(\overline u_n(t),\chi+\overline v_n(t))-\mathcal E(\overline u_n(t),\overline v_n(t))+\partial\mathcal H(\overline v_n(t))[\chi]\ge 0.
\end{equation*}
Let us send $n\to\infty$. Since
$\overline v_n(t)\rightharpoonup v(t)$ in $H^1(\Omega)$, we deduce
\begin{align*}
\lim_{n\to\infty}\partial\mathcal H(\overline v_n(t))[\chi]=\partial\mathcal H(v(t))[\chi].
\end{align*}
Moreover, $E\overline u_n(t) \rightharpoonup Eu(t)$ in $L^2(\Omega;\R^{d\times d})$ and $\overline v_n(t)\to v(t)$ in $L^2(\Omega)$ as $n\to\infty$, which implies
\begin{equation*}
\begin{aligned}
\mathcal E(u(t),\chi+v(t))-\mathcal E(u(t),v(t)) \ge \limsup_{n\to\infty} \left[\mathcal E(\overline u_n(t),\chi+\overline v_n(t))-\mathcal E(\overline u_n(t),\overline v_n(t))\right]
\end{aligned}
\end{equation*}
by Ioffe--Olech's theorem, as in Lemma~\ref{lem:ph_lim2}. If we combine these two results, for every $t\in(0,T]$ we get
\begin{equation*}
\mathcal E(u(t),\chi+v(t))-\mathcal E(u(t),v(t))+\partial\mathcal H(v(t))[\chi]\ge 0
\end{equation*}
for every $\chi\in H^1_{D_2}(\Omega)$ with $\chi\le 0$ in $\Omega$. This implies~\eqref{eq:ph_mincon2}, since the map $v^*\mapsto\mathcal H(v^*)$ is convex.

It remains to prove the energy inequality~\eqref{eq:ph_enin2} for every $t\in[0,T]$. For $t=0$ we have actually the equality, thanks to the initial conditions~\eqref{eq:ph_ic}. We fix now $t\in(0,T]$, and we use the inequality~\eqref{eq:ph_denin2} to write
\begin{align*}
&\mathcal F(\overline u_n(t),\overline u_n'(t),\overline v_n(t))\\
&\le \mathcal F(u^0, u^1, v^0)+\int_0^{t_n}(\overline f_n(s),\overline u_n'(s)-\overline w_n'(s))_{L^2(\Omega)}\de s+\int_0^{t_n}(b(\underline v_n(s))\mathbb CE\overline u_n(s),E\overline w_n'(s))_{L^2(\Omega)}\de s\\
&\quad+\spr{\overline g_n(t)}{\overline u_n(t)-\overline w_n(t)}_{H^{-1}_{D_1}(\Omega)}-\spr{g(0)}{u^0-w_1(0)}_{H^{-1}_{D_1}(\Omega)}-\int_0^{t_n}\spr{\dot g_n(s)}{\underline u_n(s)-\underline w_n(s)}_{H^{-1}_{D_1}(\Omega)}\de s\\
&\quad+(\overline u_n'(t),\overline w_n'(t))_{L^2(\Omega)}-(u^1,w_1(0))_{L^2(\Omega)}-\int_0^{t_n}(\underline u_n'(s),\dot w_n'(s))_{L^2(\Omega)}\de s
\end{align*}
for every $n\in\mathbb N$, where $t_n$ is the same number defined in Lemma~\ref{lem:ph_enin}. By $\overline v_n(t)\rightharpoonup v(t)$ in $H^1(\Omega)$ as $n\to\infty$, we deduce
\begin{equation*}
\mathcal H(v(t)) \le \liminf_{n\to\infty}\mathcal H(\overline v_n(t)).
\end{equation*}
Similarly, thanks to Ioffe--Olech's theorem, we derive
\begin{equation*}
\mathcal E(u(t),v(t))\le \liminf_{n\to\infty}\mathcal E(\overline u_n(t),\overline v_n(t)),
\end{equation*}
since $\overline v_n(t)\to v(t)$ in $L^2(\Omega)$ and $E\overline u_n(t)\rightharpoonup Eu(t)$ in $L^2(\Omega;\R^{d\times d})$. Finally, we can argue as in Lemma~\ref{lem:ph_enin} to derive that the remaining terms converge to $\mathcal W_{tot}(u,v;0,t)$ as $n\to\infty$. By combining the previous results, we deduce~\eqref{eq:ph_enin2} for every $t\in(0,T]$.

Finally, when $w_2\ge 0$ on $\partial_{D_2}\Omega$, $v^0\ge 0$ in $\Omega$, and $b(s)=(\max\{s,0\})^2+\eta$ for $s\in\R$, we have $v_n^j\ge 0$ in $\Omega$ for every $j=1,\dots,n$. Therefore, we get $\overline v_n(t)\ge 0$ in $\Omega$ for every $t\in[0,T]$, which implies $v(t)\ge 0$ in $\Omega$.
\end{proof}


\begin{acknowledgements}
The author wishes to thank Prof. Gianni Dal Maso for having proposed the problem and for many helpful discussions on the topic. The author is member of the {\em Gruppo Nazionale per l'Analisi Ma\-te\-ma\-ti\-ca, la Probabilit\`a e le loro Applicazioni}~(GNAMPA) of the {\em Istituto Nazionale di Alta Matematica}~(INdAM). 
\end{acknowledgements}



\begin{thebibliography}{99}
	
\bibitem{A}{\sc R.A.~Adams}: Sobolev spaces. Pure and Applied Mathematics, Vol. 65, Academic Press, New York, 1975.

\bibitem{ABN}{\sc S.~Almi, S.~Belz, and M.~Negri}: Convergence of discrete and continuous unilateral flows for Ambrosio--Tortorelli energies and application to mechanics. {\it ESAIM Math. Model. Numer. Anal.} {\bf 53} (2019), 659--699.

\bibitem{AT}{\sc L.~Ambrosio and V.M.~Tortorelli}: Approximation of functionals depending on jumps by elliptic functionals via $\Gamma$--convergence. {\it Comm. Pure Appl. Math.} {\bf 43} (1990), 999--1036.

\bibitem{BFM}{\sc B.~Bourdin, G.A.~Francfort, and J.J.~Marigo}: The variational approach to fracture. Reprinted from {\it J. Elasticity} {\bf 91} (2008), Springer, New York, 2008.

\bibitem{BLR}{\sc B.~Bourdin, C.J.~Larsen, and C.L.~Richardson}: A time--discrete model for dynamic fracture based on crack regularization. {\it Int. J. Fracture} {\bf 168} (2011), 133--143.

\bibitem{D}{\sc B.~Dacorogna}: Direct methods in the calculus of variations. Applied Mathematical Sciences, Vol. 78, Springer-Verlag, Berlin, 1989.

\bibitem{DM-Lar}{\sc G.~Dal~Maso and C.J.~Larsen}: Existence for wave equations on domains with arbitrary growing cracks. {\it Atti Accad. Naz. Lincei Rend. Lincei Mat. Appl.} {\bf 22} (2011), 387--408.

\bibitem{DMS}{\sc G.~Dal~Maso and R.~Scala}: Quasistatic evolution in perfect plasticity as limit of dynamic processes. {\it J. Dynam. Differential Equations} {\bf 26} (2014), 915--954.

\bibitem{DL}{\sc R.~Dautray and J.L.~Lions}: Analyse math\'ematique et calcul num\'erique pour les sciences et les techniques. Vol. 8. \'Evolution: semi-groupe, variationnel. Masson, Paris, 1988.

\bibitem{FM}{\sc G.A.~Francfort and J.J.~Marigo}: Revisiting brittle fracture as an energy minimization problem. {\it J. Mech. Phys. Solids} {\bf 46} (1998), 1319--1342.

\bibitem{Gi}{\sc A.~Giacomini}: Ambrosio--Tortorelli approximation of quasi--static evolution of brittle fractures. {\it Calc. Var. Partial Differential Equations} {\bf 22} (2005), 129--172.

\bibitem{Grif}{\sc A.A.~Griffith}: The phenomena of rupture and flow in solids. {\it Philos. Trans. Roy. Soc. London} {\bf 221-A} (1920), 163--198.

\bibitem{Lad}{\sc O.A.~Ladyzenskaya}: On integral estimates, convergence, approximate methods, and solution in functionals for elliptic operators. {\it Vestnik Leningrad. Univ.} {\bf 13} (1958), 60--69.

\bibitem{Lar}{\sc C.J.~Larsen}: Models for dynamic fracture based on Griffith's criterion. In: Hackl K. (eds.) ``IUTAM Symposium on Variational Concepts with Applications to the Mechanics of Materials", IUTAM Bookseries, Vol 21, Springer, Dordrecht, 2010, 131--140.

\bibitem{LOS}{\sc C.J.~Larsen, C.~Ortner, and E.~S\"uli}: Existence of solutions to a regularized model of dynamic fracture. {\it Math. Models Methods Appl. Sci.} {\bf 20} (2010), 1021--1048. 

\bibitem{LN}{\sc G.~Lazzaroni and L.~Nardini}, Analysis of a dynamic peeling test with speed-dependent toughness. {\it SIAM J. Appl. Math.} {\bf 78} (2018), 1206--1227.

\bibitem{Laz-Toa2}{\sc G.~Lazzaroni and R.~Toader}: A model for crack propagation based on viscous approximation. {\it Math. Models Methods Appl. Sci.} {\bf 21} (2011), 2019--2047.

\bibitem{Mott}{\sc N.F.~Mott}: Brittle fracture in mild steel plates. {\it Engineering} {\bf 165} (1948), 16--18.

\bibitem{Mum-Shah}{\sc D. Mumford and J. Shah}: Optimal approximation by piecewise smooth functions and associated variational problems. {\it Commun. Pure Appl. Math.} {\bf 42} (1989), 577--684.

\bibitem{Negri}{\sc M.~Negri}: A unilateral $L^2$--gradient flow and its quasi--static limit in phase--field fracture by an alternate minimizing movement. {\it Adv. Calc. Var.} {\bf 12} (2019), 1--29.

\bibitem{OSY}{\sc O.A.~Oleinik, A.S.~Shamaev, and G.A.~Yosifian}: Mathematical problems in elasticity and homogenization. Studies in Mathematics and its Applications, 26. North-Holland Publishing Co., Amsterdam, 1992.

\bibitem{R}{\sc S.~Racca}: A viscosity--driven crack evolution. {\it Adv. Calc. Var.} {\bf 5} (2012), 433--483.

\bibitem{S}{\sc J.~Simon}: Compact sets in the space $L^p(0,T;B)$. {\it Ann. Mat. Pura Appl.} {\bf 146} (1987), 65--96.

\bibitem{T1}{\sc E.~Tasso}: Weak formulation of elastodynamics in domains with growing cracks. Online on {\it Ann. Math. Pura Appl.} (2019), DOI: 10.1007/s10231-019-00932-y.
\end{thebibliography}
\end{document}